\def\setliststart#1{\setcounter{\@listctr}{#1}%
  \addtocounter{\@listctr}{-1}}
 \newtheorem{The}{Theorem}[section]
 \newtheorem{Cor}[The]{Corollary}
 \newtheorem{Lem}[The]{Lemma}
 \newtheorem{Pro}[The]{Proposition}
 \theoremstyle{definition}
 \newtheorem{defn}[The]{Definition}
 \theoremstyle{remark}
 \newtheorem{Rem}[The]{Remark}
 \newtheorem{ex}[The]{Example}
 \numberwithin{equation}{section}
\newcommand{\T}{\mathbb{T}}
\newcommand{\R}{\mathbb{R}}
\newcommand{\Z}{\mathbb{Z}}
\newcommand{\N}{\mathbb{N}}
\newcommand{\Singu}{\mbox{\rm Sing$\,(u)$}}
\title[Generalized characteristics and Lax-Oleinik operators]{Generalized characteristics and Lax-Oleinik operators: global theory}
\author{Piermarco Cannarsa \and Wei Cheng}
\address{Dipartimento di Matematica, Universit\`a di Roma ``Tor Vergata'',
Via della Ricerca Scientifica 1, 00133 Roma, Italy}
\email{cannarsa@mat.uniroma2.it}
\address{Department of Mathematics, Nanjing University,
Nanjing 210093, China}
\email{chengwei@nju.edu.cn}
\date{\today}
\subjclass[2010]{35F21, 49L25, 37J50}
\keywords{Hamilton-Jacobi equation, weak KAM theory, viscosity solution, generalized characteristic, singularities.}
\begin{document}
\maketitle

%\tableofcontents

\begin{abstract}
	For autonomous Tonelli systems on $\R^n$, we develop an intrinsic proof of the existence of generalized characteristics using sup-convolutions. This approach, together with convexity estimates for the fundamental solution, leads to new results such as the global propagation of singularities along generalized characteristics.
\end{abstract}

\section{Introduction}
Let $L(x,v)$ be a Tonelli Lagrangian on $\R^n$  ($L:\R^n\times\R^n\to\R$ is a function of class $C^2$, strictly convex in the fibre, with superlinear growth with respect to $v$), and let $H(x,p)$ be the associated  Hamiltonian given by the Fenchel-Legendre transform. The study of the regularity properties of the viscosity solutions  of the  Hamilton-Jacobi equation
\begin{equation}\label{eq:HJE}
H(x,Du(x))=0\qquad(x\in\R^n)
\end{equation}
is extremely important for several reasons. In the last two or three decades, remarkable progress in the broad area of Hamiltonian dynamical systems was achieved by Mather's theory for Tonelli  systems on compact manifolds in Lagrangian formalism~\cite{Mather91,Mather93}, and Fathi's weak KAM theory in Hamiltonian formalism~\cite{Fathi-book,Fathi-Siconolfi2004}. Both these theories  succeeded in the analysis of some hard dynamical problems such as Arnold diffusion. However, a general variational setting that applies to all the above aspects of Hamiltonian dynamics still has to be developed. As is well known, in both Mather's and Fathi's theories various global minimal sets in a variational sense---such as  Mather's set, Aubry's set and Ma\~n\'e's set---play a crucial role. Similarly, if we want to study the behavior of an orbit after it loses minimality, then we have to face the hard problem of dealing with the cut loci and singular sets of the associated viscosity solutions.

Although significant contributions investigating the singularities of viscosity solutions were already given in \cite{Cannarsa-Soner} and \cite{Ambrosio-Cannarsa-Soner}, the current approach to this problem goes back to \cite{alca99}, where the propagation of singularities was studied for general semiconcave functions. Since any viscosity solution $u$ of \eqref{eq:HJE} is locally semiconcave (with linear modulus), we have that singularities propagate along Lipschitz arcs starting from  any singular point $x$ of $u$ at which the superdifferential $D^+u(x)$ satisfies the condition
$$
\partial D^+u(x)\setminus D^*u(x)\not=\varnothing,
$$
where $\partial D^+u(x)$ denotes the topological boundary of $D^+u(x)$ and $D^*u(x)$ the set of all reachable gradients of $u$ at $x$. 
A more specific approach to the problem was developed in \cite{Albano-Cannarsa} by solving the generalized characteristic inclusion
$$
\dot{\mathbf{x}}(s)\in\mathrm{co}\, H_p\big(\mathbf{x}(s),D^+u(\mathbf{x}(s))\big),\quad \text{a.e.}\;s\in[0,\tau]\,.
$$
More precisely, if the initial point $x_0$ belongs to the singular set of $u$, hereafter denoted by $\Singu$, and is not a critical point of $u$ relative to $H$, i.e.,
$$
0\not\in\mathrm{co} \,H_p(x_0,D^+u(x_0))\,,
$$
then it was proved in \cite{Albano-Cannarsa} that there exists a nonconstant singular arc $\mathbf{x}$ from $x_0$ which is a generalized characteristic. The study of the local propagation of singularities along generalized characteristics was later refined in \cite{Yu} and \cite{Cannarsa-Yu}. For weak KAM solutions, local propagation results were obtained in \cite{Chen-Cheng} and the Lasry-Lions regularization procedure was applied in  \cite{Cannarsa-Cheng2} to analyze the critical points of Mather's barrier functions.
An interesting interpretation of the above singular curves as part of the flow of fluid particles has been recently proposed in \cite{Khanin-Sobolevski}  (see also \cite{Stromberg-Ahmadzadeh} for related results).

Returning to our dynamical motivations, in this paper we try to give an intrinsic interpretation of generalized characteristics and study the relevant global properties of such curves. For this purpose, we use the Lax-Oleinik semigroups $T^{\pm}_t$ (see, e.g. \cite{Fathi-book}) defined as follows: 
\begin{gather*}
T^+_tu_0(x):=\sup_{y\in\R^n}\{u_0(y)-A_t(x,y)\},\\
T^-_tu_0(x):=\inf_{y\in\R^n}\{u_0(y)+A_t(y,x)\},
\end{gather*}
where $u_0:\R^n\to\R$ is a continuous function and $A_t(x,y)$ is the fundamental solution of \eqref{eq:HJE}. These operators can be also derived from the Moreau-Yosida approximations in convex analysis (\cite{Attouch}) or the Lasry-Lions regularization technique based on sup- and inf- convolutions (\cite{Lasry-Lions}, \cite{Villani}).

By analyzing the maximizers $y_t$,  for sufficiently small $t>0$,  in the sup-convolution giving $T^+_tu_0(x)$ we obtain the global propagation of singularities which represents the main result of this paper. For such a result  we need  the following assumptions.
\begin{enumerate}[(L1)]
\item {\em Uniform convexity}: There exists a nonincreasing function $\nu:[0,+\infty)\to(0,+\infty)$ such that 
\begin{equation*}
L_{vv}(x,v)\geqslant \nu(|v|)I
\end{equation*}
for all  $(x,v)\in\R^n\times \R^n$.

\item {\em Growth conditions}: There exist two superlinear functions $\theta,\overline{\theta}:[0,+\infty)\to[0,+\infty)$ and a constant $c_0>0$ such that 
$$
\overline{\theta}(|v|)\geqslant L(x,v)\geqslant\theta(|v|)-c_0\qquad\forall  (x,v)\in\R^n\times \R^n.
$$

\item {\em Uniform regularity}: There exists a nondecreasing function $K:[0,+\infty)\to[0,+\infty)$ such that, for every multindex $|\alpha|=1,2$,
\begin{equation*}
|D^\alpha L(x,v)|\leqslant K(|v|)\qquad\forall  (x,v)\in\R^n\times \R^n,
\end{equation*}
\end{enumerate}
Recalling that $\Singu$ stands for the singular set of $u$ we now proceed to state our

\begin{description}
  \item[Propagation result] Let $L$ be a Lagrangian on $\R^n$ satisfying conditions {\rm (L1)-(L3)}, and let $u$ be a globally Lipschitz semiconcave solution of the Hamilton-Jacobi equation \eqref{eq:HJE}, where $H$ is the Hamiltonian associated with $L$. If $x$ belongs to $\Singu$, then there exists a generalized characteristic $\mathbf{x}:[0,+\infty)\to\R^n$ such that $\mathbf{x}(0)=x$ and $\mathbf{x}(s)\in\Singu$ for all $s\in [0,+\infty)$.
\end{description}

We observe that the study of the global propagation of singularities is much more difficult than the local one. Indeed, to this date the only known results concern geodesic systems, see \cite{Albano2014_2} and \cite{acns}. More precisely, \cite{Albano2014_2} studies the global propagation of the so-called $C^1$-singular support of solutions which---in this case---coincides with the closure of Sing$(u)$, while \cite{acns} investigates the propagation of genuine singularities. For time dependent problems, global propagation was addressed in \cite{Albano2014_1} for the $C^1$-singular support of solutions and, recently, in \cite{Cannarsa-Mazzola-Sinestrari} for singularities of solutions to eikonal equations.

It is worth mentioning that, when considering geodesic systems on Riemannian manifolds, the method of generalized characteristics (or generalized gradient flow) has been successfully applied to reveal topological relations between  a compact domain $\Omega$ and the cut locus enclosed in $\Omega$ (\cite{acns}). Such relations depend on global results for the propagation of singularities along the associated generalized characteristics. An analogous relation between the Aubry set and the cut locus can be deduced from our results as  we will show in the near future. In preparation for such an application, in this paper we have included section~\ref{se:torus} where our global propagation result is adapted to the $n$-dimensional flat torus. % (\cite{Cannarsa-Cheng4}).

For the proof of the above theorem we need regularity results for the value function of the action functional (also called fundamental solution of \eqref{eq:HJE} in \cite{McEneaney-Dower})
$$
A_{t}(x,y)=\inf_{\gamma\in\Gamma^t_{x,y}}\int^{t}_{0}L(\gamma(s),\dot{\gamma}(s))ds\quad (t>0\,,\;x,y\in\R^n)
$$
where
$$
\Gamma^t_{x,y}=\{{\gamma\in W^{1,1}([0,t],\R^n): \gamma(0)=x,\gamma(t)=y}\}.
$$
More precisely, we need 
\begin{description}
  \item[Convexity and local $C^{1,1}$ regularity results] Suppose $L$ is a Tonelli Lagrangian satisfying {\rm (L1)-(L3)}. Then the following properties hold true.
  \begin{enumerate}[(a)]
  \item For any $\lambda>0$,  there exists $t_\lambda>0$ such that, for any $x\in\R^n$, the function $(t,y)\mapsto A_t(x,y)$ is semiconvex on the  cone
\begin{equation*}
S_\lambda(x,t_\lambda):=\big\{(t,y)\in\R\times\R^n~:~0<t< t_\lambda,\; |y-x|<\lambda t\big\}\,,
\end{equation*}
that is,  there exists $ C_\lambda>0$ such that for all $x\in\R^n$, all $(t,y)\in S_\lambda(x,t_\lambda)$,  all $h\in[0,t/2)$, and  all $z\in  B(0,\lambda t)$ we have that
$$
A_{t+h}(x,y+z)+A_{t-h}(x,y-z)-2A_t(x,y)\geqslant - \frac{C_ \lambda}{t}(h^2+|z|^2).
$$
  \item For all $t\in(0,t_{\lambda}]$, $A_t(x,\cdot)$ is uniformly convex on $B(x,\lambda t)$, that is,   there exists  $C_{\lambda}'>0$ such that for all $x\in\R^n$, all $y\in B(x,\lambda t)$, and all $z\in  B(0,\lambda t)$ we have that
$$
A_{t}(x,y+z)+A_{t}(x,y-z)-2A_t(x,y)\geqslant \frac{C'_{\lambda}}{t}|z|^2.
$$
  \item For any $x\in\R^n$ the functions $(t,y)\mapsto A_t(x,y)$ and $(t,y)\mapsto A_t(y,x)$ are of class $C^{1,1}_{\text{loc}}$ on the cone $S_{\lambda}(x,t_\lambda)$ defined above. 
  \end{enumerate}
\end{description}
Similar regularity results were obtained in \cite{Bernard2012} by a different approach, under more restrictive structural assumptions than those we consider in this paper. 

This paper is organized as follows. In section 2, we review basic properties of viscosity solution of Hamilton-Jacobi equations. In section 3, we discuss connections between sup-convolutions and generalized characteristics and we give our global result on the propagation of singularities along generalized characteristics. Moreover, for Tonelli systems, we  adapt the above results to the $n$-dimensional torus. The paper contains four appendices that contain technical results and background material which is useful for our approach: in the first one we give a uniform bound for minimizers of the action functional following  \cite{Ambrosio-Ascenzi-Buttazzo} and \cite{Dal-Maso-Frankowska}, in the second one we give detailed proofs of all the required regularity results for the fundamental solution, in the third one we adapt the construction of generalized characteristics from \cite{Albano-Cannarsa} to the present context, in the fourth one we provide a global semiconcavity estimate for the weak KAM solution on $\R^n$ constructed in \cite{Fathi-Maderna}.
\medskip

\noindent{\bf Acknowledgments} This work was partially supported by the Natural Scientific Foundation of China (Grant No. 11271182 and No. 11471238), the  National Basic Research Program of China (Grant No. 2013CB834100), a program PAPD of Jiangsu Province, China, and the National Group for Mathematical Analysis, Probability and Applications (GNAMPA) of the Italian Istituto Nazionale di Alta Matematica ``Francesco Severi''. The authors thank Albert Fathi for the discussion during his visit in Nanjing. The second author is grateful to Cui Chen, Liang Jin for helpful discussions, and also Jun Yan for the communications and the hospitality during the visit to Fudan University in 2014-15.

\section{Preliminaries}

\subsection{Semiconcave functions}

Let $\Omega\subset\R^n$ be a convex set. We recall that a function $u:\Omega\rightarrow\R$ is said to be {\em semiconcave} (with linear modulus) if there exists a constant $C>0$ such that
\begin{equation}\label{eq:SCC}
\lambda u(x)+(1-\lambda)u(y)-u(\lambda x+(1-\lambda)y)\leqslant\frac C2\lambda(1-\lambda)|x-y|^2
\end{equation}
for any $x,y\in\Omega$ and $\lambda\in[0,1]$.  Any constant $C$ that satisfies the above inequality  is called a {\em semiconcavity constant} for $u$ in $\Omega$.

A function $u:\Omega\rightarrow\R$ is said to be {\em semiconvex} if $-u$ is semiconcave. 

When $u:\Omega\to\R$ is continuous, it can be proved that $u$ is semiconcave with constant $C$ if and only if   
$$
u(x)+u(y)-2u\left(\frac{x+y}2\right)\leqslant \frac C2|x-y|^2
$$
for any $x,y\in\Omega$.

Hereafter, we assume that $\Omega$ is a nonempty open subset of $\R^n$.

We recall that a function $u:\Omega\rightarrow\R$ is said to be {\em locally semiconcave} (resp. {\em locally semiconvex}) if for each $x\in\Omega$ there exists an open ball $B(x,r)\subset\Omega$ such that $u$ is a semiconcave (resp. semiconvex) function on $B(x,r)$.

Let $u:\Omega\subset\R^n\to\R$ be a continuous function. We recall that, for any $x\in\Omega$, the closed convex sets
\begin{align*}
D^-u(x)&=\left\{p\in\R^n:\liminf_{y\to x}\frac{u(y)-u(x)-\langle p,y-x\rangle}{|y-x|}\geqslant 0\right\},\\
D^+u(x)&=\left\{p\in\R^n:\limsup_{y\to x}\frac{u(y)-u(x)-\langle p,y-x\rangle}{|y-x|}\leqslant 0\right\}.
\end{align*}
are called the {\em (Dini) subdifferential} and {\em superdifferential} of $u$ at $x$, respectively.

Let now $u:\Omega\to\R$ be locally Lipschitz. We recall that a vector $p\in\R^n$ is said to be a {\em reachable} (or {\em limiting}) {\em gradient} of $u$ at $x$ if there exists a sequence $\{x_n\}\subset\Omega\setminus\{x\}$, converging to $x$, such that $u$ is differentiable at $x_k$ for each $k\in\N$ and
$$
\lim_{k\to\infty}Du(x_k)=p.
$$
The set of all reachable gradients of $u$ at $x$ is denoted by $D^{\ast}u(x)$.

Now we list some   well known properties of the superdifferential  of a semiconcave function on $\Omega\subset\R^n$ (see, e.g., \cite{Cannarsa-Sinestrari} for the proof).

\begin{Pro}\label{basic_facts_of_superdifferential}
Let $u:\Omega\subset\R^n\to\R$ be a semiconcave function and let $x\in\Omega$. Then the following properties hold.
\begin{enumerate}[\rm {(}a{)}]
  \item $D^+u(x)$ is a nonempty compact convex set in $\R^n$ and $D^{\ast}u(x)\subset\partial D^+u(x)$, where  $\partial D^+u(x)$ denotes the topological boundary of $D^+u(x)$.
  \item The set-valued function $x\rightsquigarrow D^+u(x)$ is upper semicontinuous.
  \item If $D^+u(x)$ is a singleton, then $u$ is differentiable at $x$. Moreover, if $D^+u(x)$ is a singleton for every point in $\Omega$, then $u\in C^1(\Omega)$.
  \item $D^+u(x)=\mathrm{co}\, D^{\ast}u(x)$.
%  \item $D^{\ast}u(x)=\big\{\lim_{i\to\infty}p_i~:~ p_i\in D^+u(x_i),\; x_i\to x,\;\mathrm{diam}\,(D^+u(x_i))\to 0\big\}$.
\end{enumerate}
\end{Pro}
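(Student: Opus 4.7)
The plan is to reduce the four assertions to classical properties of convex functions by setting $\tilde u(x) := u(x) - \tfrac{C}{2}|x|^2$ for $C$ any semiconcavity constant of $u$. The midpoint characterization shows $\tilde u$ is concave, hence locally Lipschitz, and one checks $D^+u(x) = D^+\tilde u(x) + Cx$ and $D^{\ast}u(x) = D^{\ast}\tilde u(x) + Cx$, so each assertion translates into the analogous statement for $-\tilde u$ convex. I will argue below directly in terms of $u$.

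For (a), nonemptiness, compactness, and convexity of $D^+u(x)$ are classical facts for subdifferentials of finite convex functions on open subsets of $\R^n$. To prove $D^{\ast}u(x)\subset \partial D^+u(x)$: given $p=\lim_k Du(x_k)$ with $u$ differentiable at $x_k\to x$, pass to the limit in the semiconcavity inequality $u(y)-u(x_k)-\langle Du(x_k),y-x_k\rangle\leqslant \tfrac{C}{2}|y-x_k|^2$ to get $p\in D^+u(x)$. If $p$ were in $\mathrm{int}\,D^+u(x)$, pick $\epsilon>0$ with $B(p,\epsilon)\subset D^+u(x)$; for any $q\in B(p,\epsilon)$, applying the superdifferential bound at $x$ with $y=x_k$ and the corresponding bound at $x_k$ with $y=x$ and summing gives $\langle Du(x_k)-q,x_k-x\rangle\leqslant C|x_k-x|^2$. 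Maximizing over $q\in B(p,\epsilon)$ produces $\epsilon|x_k-x|+\langle Du(x_k)-p,x_k-x\rangle\leqslant C|x_k-x|^2$, and dividing by $|x_k-x|$ contradicts $Du(x_k)\to p$ as $k\to\infty$. Assertion (b) follows by the same passage to the limit: if $x_k\to x$, $p_k\in D^+u(x_k)$, and $p_k\to p$, pass to the limit in the semiconcavity inequality at $x_k$.

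For (c), when $D^+u(x)=\{p\}$, upper semicontinuity from (b) forces $Du(z)\to p$ along every Rademacher differentiability sequence $z\to x$; combined with Fubini and $u(x+tv)-u(x)=\int_0^t\langle Du(x+sv),v\rangle\,ds$ (valid for a.e.\ direction $v$ by absolute continuity of $u$ on almost every line), this yields $(u(x+tv)-u(x))/t\to\langle p,v\rangle$ for a.e.\ $v$, which together with local Lipschitz continuity and the semiconcavity upper bound gives differentiability of $u$ at $x$ with $Du(x)=p$. If $D^+u(x)$ is a singleton at every $x\in\Omega$, then $Du$ coincides with the single-valued upper-semicontinuous map $x\mapsto D^+u(x)$, hence is continuous and $u\in C^1(\Omega)$.

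For (d), $\mathrm{co}\,D^{\ast}u(x)\subset D^+u(x)$ follows from (a) and convexity of $D^+u(x)$. For the reverse, since $D^+u(x)$ is a nonempty compact convex subset of $\R^n$, Carath\'eodory's theorem reduces the problem to showing every extreme point of $D^+u(x)$ belongs to $D^{\ast}u(x)$; by Straszewicz's theorem the exposed points are dense in the extreme points, and closedness of $D^{\ast}u(x)$ reduces the task to exposed points. For $p$ exposed in direction $e$, the semiconcavity estimate yields $\int_0^t\langle Du(x+se),e\rangle\,ds\leqslant t\langle p,e\rangle+\tfrac{C}{2}t^2$, while upper semicontinuity of $D^+u$ confines $Du(x+se)$ to a neighborhood of $D^+u(x)$ for small $s$; along a sequence of differentiability points $s_k\downarrow 0$ on which $\langle Du(x+s_k e),e\rangle\to\langle p,e\rangle$, the exposure property of $p$ forces $Du(x+s_k e)\to p$, placing $p\in D^{\ast}u(x)$. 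The main obstacle is exactly this last step: Rademacher alone does not suffice, since gradients at nearby differentiability points can cluster anywhere on $\partial D^+u(x)$; convergence to a prescribed extreme point requires combining exposure geometry with the mean-value/integration argument and upper semicontinuity of $D^+u$.
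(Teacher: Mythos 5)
The paper does not prove Proposition~\ref{basic_facts_of_superdifferential}; it simply cites \cite{Cannarsa-Sinestrari} as these facts are classical. Your reduction to concave functions via $\tilde u(x)=u(x)-\tfrac C2|x|^2$ handles (a)--(c) correctly: the contradiction argument showing $D^{\ast}u(x)\subset\partial D^+u(x)$ is sound, (b) is closedness of the graph plus local boundedness, and (c) amounts to the classical fact that a concave function with singleton supergradient is Fr\'echet differentiable.

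The argument for (d) has a genuine gap at the last step. You integrate $\langle Du(x+se),e\rangle$ along the segment $\{x+se: s\in[0,t]\}$ and seek differentiability points $s_k\downarrow 0$ \emph{on that very line}. But a semiconcave function need not be differentiable at any point of a prescribed line through the singular point $x$ (consider $u(x_1,x_2)=-|x_1|$ along the $x_2$-axis). Rademacher plus Fubini gives full-measure differentiability only on almost every \emph{parallel} line, not on the chosen one, so the identity $u(x+te)-u(x)=\int_0^t\langle Du(x+se),e\rangle\,ds$ and the extraction of the sequence $s_k$ are both unjustified as written. A correct version must integrate along a sequence of nearby parallel lines where a.e.\ differentiability does hold, run a diagonal argument, and use that an exposing direction perturbed slightly still isolates gradients close to $p$; this needs to be spelled out. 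The obstacle you flag---that nearby gradients may cluster anywhere on $\partial D^+u(x)$---is real but secondary; the primary missing ingredient is the availability of differentiability points on the integration path. For comparison, the proof in \cite{Cannarsa-Sinestrari} does not invoke Straszewicz or exposed points: for each direction $\theta$ it constructs a reachable gradient $p_\theta\in D^{\ast}u(x)$ satisfying $\langle p_\theta,\theta\rangle = u'(x;\theta)=\min_{q\in D^+u(x)}\langle q,\theta\rangle$, and then concludes $D^+u(x)\subset\mathrm{co}\,D^{\ast}u(x)$ by the separation theorem.
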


\begin{Pro}[\cite{Cannarsa-Sinestrari}]
\label{criterion-Du_semiconcave2}
Let $u:\Omega\to\R$ be a continuous function. If there exists a constant $C>0$ such that, for any $x\in\Omega$, there exists $p\in\R^n$ such that
\begin{equation}\label{criterion_for_lin_semiconcave}
u(y)\leqslant u(x)+\langle p,y-x\rangle+\frac C2|y-x|^2,\quad \forall y\in\Omega,
\end{equation}
then $u$ is semiconcave with constant $C$ and $p\in D^+u(x)$.
Conversely,
if $u$ is semiconcave  in $\Omega$ with constant $C$, then \eqref{criterion_for_lin_semiconcave} holds for any $x\in\Omega$ and $p\in D^+u(x)$.
\end{Pro}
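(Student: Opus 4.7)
The plan is to establish the two implications in Proposition~\ref{criterion-Du_semiconcave2} separately, both by direct computation from the definitions.

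For the \emph{sufficient} direction, I would fix $x_1,x_2\in\Omega$ with $[x_1,x_2]\subset\Omega$ and $\lambda\in[0,1]$, set $x_\lambda:=\lambda x_1+(1-\lambda)x_2$, and apply the assumed paraboloid bound at $x_\lambda$ (with its selected vector $p_\lambda$) once with $y=x_1$ and once with $y=x_2$. Taking the $\lambda$-convex combination of the two resulting inequalities, the linear terms involving $p_\lambda$ cancel, since $\lambda(x_1-x_\lambda)+(1-\lambda)(x_2-x_\lambda)=0$, while the quadratic remainders simplify to $\frac{C}{2}\lambda(1-\lambda)|x_1-x_2|^2$ via $|x_1-x_\lambda|^2=(1-\lambda)^2|x_1-x_2|^2$ and $|x_2-x_\lambda|^2=\lambda^2|x_1-x_2|^2$. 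This is precisely \eqref{eq:SCC}. The inclusion $p\in D^+u(x)$ will then drop out of \eqref{criterion_for_lin_semiconcave} by dividing by $|y-x|$ and passing to $\limsup_{y\to x}$, which produces a nonpositive upper bound.

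For the \emph{necessary} direction, I would fix $x\in\Omega$, $p\in D^+u(x)$, and $y\in\Omega$ with $[x,y]\subset\Omega$. For each $t\in(0,1)$, applying semiconcavity to the pair $(x,y)$ with parameter $1-t$ yields
$$
(1-t)u(x)+t\,u(y)-u(x+t(y-x))\leqslant\frac{C}{2}t(1-t)|y-x|^2,
$$
which after rearranging reads
$$
\frac{u(x+t(y-x))-u(x)}{t}\geqslant u(y)-u(x)-\frac{C}{2}(1-t)|y-x|^2.
$$
On the other hand, the definition of $D^+u(x)$ applied along the segment direction gives $\limsup_{t\to 0^+}\bigl(u(x+t(y-x))-u(x)\bigr)/t\leqslant\langle p,y-x\rangle$. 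Taking $\limsup_{t\to 0^+}$ in the previous inequality, whose right-hand side converges to $u(y)-u(x)-\frac{C}{2}|y-x|^2$, yields $\langle p,y-x\rangle\geqslant u(y)-u(x)-\frac{C}{2}|y-x|^2$, i.e., \eqref{criterion_for_lin_semiconcave}.

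No serious obstacle is expected; the argument is entirely elementary. The one subtlety worth flagging is the interchange between the definition of $D^+u(x)$, which is a $\limsup$ over all $z\to x$, and the restricted one-sided directional limit along the segment $[x,y]$; this poses no problem because the directional family is a subfamily of the general one, so the upper bound on the general $\limsup$ passes to the restricted one. The statement is the classical characterization of semiconcavity via supporting upward paraboloids, with the supporting slopes forming exactly the superdifferential, as recorded in \cite{Cannarsa-Sinestrari}.
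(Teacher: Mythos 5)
Your proof is correct, and it is the standard argument for this classical characterization of semiconcavity; the paper itself does not prove the proposition but simply cites \cite{Cannarsa-Sinestrari}, where essentially this same two-part computation appears. Both directions are handled cleanly: in the sufficient direction the $\lambda$-weighted combination at $x_\lambda$ makes the linear terms vanish and the identities $|x_1-x_\lambda|^2=(1-\lambda)^2|x_1-x_2|^2$, $|x_2-x_\lambda|^2=\lambda^2|x_1-x_2|^2$ give the correct $\lambda(1-\lambda)$ factor, while $p\in D^+u(x)$ follows by dividing \eqref{criterion_for_lin_semiconcave} by $|y-x|$ and letting $y\to x$; in the converse direction, passing from the general $\limsup$ in the definition of $D^+u(x)$ to the one-sided directional limit along $[x,y]$ is a restriction and therefore legitimate, and combining the two bounds yields \eqref{criterion_for_lin_semiconcave}. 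The only tacit assumption—convexity of $\Omega$ so that $x_\lambda$ and $x+t(y-x)$ lie in $\Omega$—is consistent with the paper's convention at the start of Section 2.1 and with the fact that \eqref{eq:SCC} only makes sense on a convex domain, so nothing is missing.
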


A point $x\in\Omega$ is called a {\em singular point} of $u$ if $D^+u(x)$ is not a singleton. The set of all singular points of $u$, also called the {\em singular set} of $u$, is denoted by $\Singu$.

\subsection{Tonelli Lagrangians}
In this paper, we concentrate on Lagrangians on Euclidean configuration space $\R^n$. We say that a function $\theta:[0,+\infty)\to[0,+\infty)$ is {\em superlinear} if $\theta(r)/r\to+\infty$ as $r\to+\infty$. 

\begin{defn}\label{defn_generalized_Tonelli_function}
	A function $F:\R^n\times\R^n\to\R$ is called a {\em generalized Tonelli function} if $F$ is a function of class $C^2$ that satisfies the following conditions:
\begin{enumerate}[(T1)]
\item {\em Uniform convexity}: There exists a nonincreasing function $\nu:[0,+\infty)\to(0,+\infty)$ such that 
\begin{equation*}
F_{vv}(x,v)\geqslant \nu(|v|)I\qquad\forall  (x,v)\in\R^n\times \R^n.
\end{equation*}
\item {\em Growth condition}: There exist two superlinear function $\theta,\overline{\theta}:[0,+\infty)\to[0,+\infty)$ and a constant $c_0>0$ such that 
$$
\overline{\theta}(|v|)\geqslant F(x,v)\geqslant\theta(|v|)-c_0\qquad\forall  (x,v)\in\R^n\times \R^n.
$$
\item {\em Uniform regularity}: There exists a nondecreasing function $K:[0,+\infty)\to[0,+\infty)$ such that, for every multindex $|\alpha|=1,2$,
\begin{equation*}
|D^\alpha F(x,v)|\leqslant K(|v|)\qquad\forall  (x,v)\in\R^n\times \R^n,
\end{equation*}
\end{enumerate} 
\end{defn}

The convex conjugate of a superlinear function $\theta$ is defined as 
\begin{equation}\label{eq:convex_conj_theta}
	\theta^*(s)=\sup_{r\geqslant0}\{rs-\theta(r)\} s\qquad \forall s\geqslant 0.
\end{equation}
In view of the superlinear growth  of $\theta$ it is clear that $\theta^*$ is well defined and satisfies
\begin{equation}
\label{eq:young}
\theta(r)+\theta^*(s)\geqslant rs\qquad \forall r,s\geqslant0,
\end{equation}
which in turn can be used to show that $\theta^*(s)/s\to\infty$ as $s\to\infty$.

\begin{defn}
	A function $L:\R^n\times\R^n\to\R$ is called a {\em Tonelli Lagrangian} if $L$ is a generalized Tonelli function as in Definition \ref{defn_generalized_Tonelli_function}. If $L$ is a Tonelli Lagrangian, the associated Hamiltonian $H$ is the Fenchel-Legendre dual of $L$ is defined by
\begin{equation}\label{eq:H}
H(x,p)=\sup_{v\in\R^n}\big\{\langle p,v \rangle-L(x,v)\big\}\qquad(x,p)\in \R^n\times\R^n\,.
\end{equation}
The Hamiltonian $H$ is called a {\em Tonelli Hamiltonian} if $H$ is a generalized Tonelli function.
We denote by (L1)-(L3) (resp. (H1)-(H3)) the corresponding conditions of a Tonelli Lagrangian $L$ (resp. Tonelli Hamiltonian $H$).
\end{defn}

\begin{ex}
A typical example of a Tonelli Lagrangian $L$ is the one of mechanical systems which has the form
$$
L(x,v)=f(x)(1+|v|^2)^{\frac q2}+V(x),\quad (x,v\in\R^n),
$$
where $q>1$,  $f$ and $V$ are smooth functions on $\R^n$ with bounded derivatives up to the second order, and $\inf_{\R^n}f>0$.
\end{ex}

\begin{Lem}\label{uniform_condition_H}
If $L$ is a Tonelli Lagrangian with $H$ its Fenchel-Legendre dual, then $H$ is a Tonelli Hamiltonian.
\end{Lem}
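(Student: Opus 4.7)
The plan is to derive each of the axioms (T1)--(T3) for $H$ from the axioms (L1)--(L3) for $L$ via Legendre duality, with all estimates threaded through an a priori bound on the Legendre maximizer. By strict convexity and superlinear growth of $L(x,\cdot)$, for every $(x,p)\in\R^n\times\R^n$ the supremum in \eqref{eq:H} is attained at a unique point $v(x,p)$ characterized by $p=L_v(x,v(x,p))$, so that $H(x,p)=\langle p,v(x,p)\rangle - L(x,v(x,p))$. Combining this identity with the trivial bound $H(x,p)\geqslant -L(x,0)\geqslant -\overline\theta(0)$ and the lower growth from (L2) yields
\[
\theta(|v(x,p)|)\leqslant c_0+\overline\theta(0)+|p|\,|v(x,p)|,
\]
and the superlinearity of $\theta$ then produces a nondecreasing function $M:[0,\infty)\to[0,\infty)$ such that $|v(x,p)|\leqslant M(|p|)$ uniformly in $x$. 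This a priori bound is the pivot of the whole argument.

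Next, I would apply the implicit function theorem to the equation $p=L_v(x,v)$, which is admissible because (L1) gives $L_{vv}\geqslant\nu(|v|)I>0$; this shows that $v(x,p)$ is of class $C^1$ in both variables. Differentiating the Legendre identity then produces the standard relations
\[
H_p=v,\quad H_x=-L_x,\quad H_{pp}=L_{vv}^{-1},\quad H_{xp}=-L_{vv}^{-1}L_{vx},\quad H_{xx}=-L_{xx}+L_{xv}L_{vv}^{-1}L_{vx},
\]
where on the right-hand side all $L$-derivatives are evaluated at $(x,v(x,p))$.

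Axiom (H1) is then essentially dual to axiom (L3): from $L_{vv}(x,v)\leqslant K(|v|)I$ one reads off $H_{pp}\geqslant K(|v(x,p)|)^{-1}I\geqslant K(M(|p|))^{-1}I$, so that $\nu_H(r):=K(M(r))^{-1}$ is a positive, nonincreasing function which does the job. Axiom (H2) is the standard duality for superlinear conjugates: $L\leqslant\overline\theta$ yields $H(x,p)\geqslant\overline\theta^*(|p|)$, while $L\geqslant\theta-c_0$ yields $H(x,p)\leqslant\theta^*(|p|)+c_0$, and the Legendre transform of a superlinear function is again superlinear (modulo the trivial constant shifts needed to enforce the nonnegativity convention of Definition \ref{defn_generalized_Tonelli_function}). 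Finally, axiom (H3) follows by substituting the bound $|v(x,p)|\leqslant M(|p|)$ into the formulas displayed above: using (L3) to control $L_{xx}$, $L_{xv}$, $L_{vx}$, and (L1) to bound $L_{vv}^{-1}$ from above, every entry of $DH$ and $D^2H$ is dominated by an expression of the form $K(M(|p|))+K(M(|p|))^2/\nu(M(|p|))$, which is nondecreasing in $|p|$.

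The only genuine obstacle is the a priori bound of the first paragraph: without it, the pointwise estimates on $L$, which naturally live over the $v$-variable, cannot be transferred to pointwise estimates on $H$, which must live over the $p$-variable. Once that bound is secured, the rest reduces to bookkeeping on the Legendre identities.
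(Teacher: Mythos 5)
Your proof is correct and follows essentially the same strategy as the paper: derive the a priori bound $|v(x,p)|\leqslant M(|p|)$ on the Legendre maximizer, read off (H1) and (H3) from the derivative identities $H_p=v$, $H_{pp}=L_{vv}^{-1}$, etc., combined with (L1) and (L3), and obtain (H2) from conjugate duality. If anything, you are slightly more explicit than the paper in verifying the upper bound $H(x,p)\leqslant\theta^*(|p|)+c_0$ and in stating the matrix estimate for $H_{pp}$; the only minor slip is that your formula $-L_{vv}^{-1}L_{vx}$ is $H_{px}$, i.e.\ the transpose of $H_{xp}=-L_{xv}L_{vv}^{-1}$, which is immaterial for the norm bounds.
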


\begin{proof}
First, let $v_{x,p}\in\R^n$ be such that  $p=L_v(x,v_{x,p})$ or, equivalently, $v_{x,p}=H_p(x,p)$. By assumptions (L1)-(L3), we have that
\begin{align*}
	|p||v_{x,p}|\geqslant& \langle L_v(x,v_{x,p}),v_{x,p}\rangle\geqslant L(x,v_{x,p})-L(x,0)\geqslant\theta(|v_{x,p}|)-c_0-\bar{\theta}(0)\\
	\geqslant&(|p|+1)|v_{x,p}|-c_0-\bar{\theta}(0)-\theta^*(|p|+1).
\end{align*}
This shows that 
$$
|H_p(x,p)|=|v_{x,p}|\leqslant c_0+\bar{\theta}(0)+\theta^*(|p|+1)=C_1(|p|),
$$
where $C_1(r)=c_0+\bar{\theta}(0)+\theta^*(r+1)$. The estimates for the other derivatives in (H3) and (H1) can be proved by  using the relations
	\begin{align*}
		&H_x(x,p)=-L_x(x,H_p(x,p))\\
		&H_{pp}(x,p)=L^{-1}_{vv}(x,H_p(x,p))\\
		&H_{xp}(x,p)=-L_{xv}(x,H_p(x,p))H_{pp}(x,p)\\
		&H_{xx}(x,p)=-L_{xx}(x,H_p(x,p))-L_{xv}(x,H_p(x,p))H_{px}(x,p)
	\end{align*}
	and our conditions (L1)-(L3). This completes the verification of (H1) and (H3).
	
	To check (H2), we have that, for all $R\geqslant0$,
	$$
	H(x,p)\geqslant R|p|-\bar{\theta}(R),\quad \forall (x,p)\in\R^n\times\R^n
	$$
	by \eqref{eq:H} and (L2). Set $\theta_1(r)=\sup_{R>0}\{Rr-\bar{\theta}(R)\}$, $r\in(0,+\infty)$, which is well defined by (L2). Thus, $\theta_1$ gives the required superlinear function for (H2).
\end{proof}

\subsection{Hamilton-Jacobi equations.}
Suppose $H$ is the Hamiltonian associated with a Tonelli Lagrangian $L$ and consider the Hamilton-Jacobi equation
\begin{equation}\label{HJ1}
H(x,Du(x))=0\qquad (x\in\R^n).
\end{equation}
We recall that a continuous function $u$ is called a {\em viscosity subsolution} of equation
\eqref{HJ1} if, for any $x\in\R^n$,
\begin{align}\label{viscosity subsolution}
H(x,p)\leqslant0,\quad\forall p\in D^+u(x)\,.
\end{align}
Similarly, $u$ is a {\em viscosity supersolution} of equation \eqref{HJ1} if, for any $x\in\R^n$,
\begin{align}\label{viscosity supersolution}
H(x,p)\geqslant0,\quad\forall p\in D^-u(x)\,.
\end{align}
Finally, $u$ is called a {\em viscosity solution} of equation \eqref{HJ1}, if it is both a viscosity subsolution and a supersolution.

Throughout this paper we will be concerned with solutions of the above equation that are Lipschitz continuous and semiconcave on $\R^n$. The existence of such solution is the object of the following proposition which is essentially a consequence of the existence theorem of \cite{Fathi-Maderna} and the semiconcavity results of this paper (see also \cite{Cannarsa-Sinestrari}  and \cite{Rifford}).

\begin{Pro}\label{Ext_and_reachable}
Let $L$ be a Tonelli Lagrangian and let $H$ be the associated Hamiltonian.
Then there exists a constant $c(H)\in\R$ such that the Hamilton-Jacobi equation
\begin{equation}\label{eq:crtical_H_J_eqn}
	H(x,Du(x))=c,\quad x\in\R^n,
\end{equation}
admits a viscosity solution $u:\R^n\to\R$  for $c=c(H)$ and does not admit any such solution for $c<c(H)$. Moreover, $u$ is globally Lipschitz continuous and semiconcave on $\R^n$.
\end{Pro}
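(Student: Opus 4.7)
My plan is to use the Lax-Oleinik representation of weak KAM solutions together with the local semiconcavity of the fundamental solution $A_t(y,x)$ stated earlier in the excerpt. First, define the critical value as
\begin{equation*}
c(H)=\inf\big\{c\in\R:\text{equation \eqref{eq:crtical_H_J_eqn} admits a continuous viscosity subsolution}\big\}.
\end{equation*}
Using condition (H2) (which follows from (L2) via Lemma \ref{uniform_condition_H}), the existence of at least one $c$ large enough to admit a subsolution is clear, while the one-sided growth in $|p|$ prevents $c(H)=-\infty$; hence $c(H)\in\R$. The non-existence of solutions below $c(H)$ is immediate from the definition. For existence at $c=c(H)$, I would invoke the Fathi--Maderna theorem, which produces a continuous viscosity solution $u:\R^n\to\R$ and which in its construction exhibits $u$ as a fixed point of the Lax--Oleinik semigroup $T_t^-$ associated with the Lagrangian $L-c(H)$.

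The global Lipschitz continuity of $u$ comes from the combination of the fixed-point relation $u=T_t^- u$ and the superlinearity of $L$. Indeed, $u$ is differentiable almost everywhere by Rademacher's theorem and satisfies $H(x,Du(x))=c(H)$ a.e.; since $H(x,\cdot)$ is coercive (its sublevel sets being bounded uniformly in $x$ by the dual of (L2)), one obtains a uniform bound $|Du|\leqslant\kappa$ and hence a Lipschitz constant. Alternatively, using the dynamic programming inequality $u(x)\leqslant u(y)+A_t(y,x)+c(H)t$ together with the estimate on $A_t$ given by the growth of $L$ yields the same Lipschitz bound with $\kappa$ depending only on $\theta$, $\bar\theta$, $c_0$.

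For semiconcavity, the idea is to exploit the representation
\begin{equation*}
u(x)=\inf_{y\in\R^n}\bigl\{u(y)+A_t(y,x)+c(H)t\bigr\}
\end{equation*}
for an appropriately small, fixed $t>0$. By the uniform Lipschitz bound just established and the uniform bound on minimizers from the first appendix, any minimizer $y_t(x)$ satisfies $|y_t(x)-x|\leqslant\lambda t$ for a constant $\lambda$ independent of $x$, so only values of $y$ with $(t,x)\in S_\lambda(y,t_\lambda)$ contribute. The local semiconcavity/$C^{1,1}$ result stated above then gives a uniform semiconcavity constant $C_\lambda/t$ for the map $x\mapsto A_t(y,x)$ on the relevant balls, and the standard fact that an infimum of a family of functions sharing a common semiconcavity constant is itself semiconcave with that constant upgrades this into global semiconcavity of $u$ on $\R^n$. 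The main technical obstacle I foresee is the uniformity in $y$ of the semiconcavity estimate for $A_t(y,\cdot)$ and the uniform localization of minimizers $y_t(x)$, precisely the points that the convexity/regularity statement and Appendix 1 are designed to handle; once these are in hand, the argument reduces to assembling the pieces.
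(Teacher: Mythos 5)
Your proposal is correct and matches the paper's own proof in its essential structure: cite Fathi--Maderna for the existence of $c(H)$, the weak KAM fixed-point relation $u=T^-_t u + c(H)t$, and the global Lipschitz bound, then derive semiconcavity by comparing $u$ at $x\pm z$ with the representation $u(x)=\min_y\{u(y)+A_{t_0}(y,x)\}+c(H)t_0$, plugging in the minimizer $y_x$, and invoking the uniform (in the first argument) semiconcavity constant $C_\lambda/t_0$ of $A_{t_0}(y_x,\cdot)$ from Proposition~\ref{semiconcave_A_t} together with the uniform localization $|y_x-x|\leqslant\lambda_0 t_0$ from the Lipschitz bound. The technical obstacles you flag are precisely the ones Proposition~\ref{semiconcave_A_t} and the appendix estimates are designed to resolve, and your assembly of them is the same as the paper's.
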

The proof of Proposition~\ref{Ext_and_reachable} is given in Appendix~\ref{appendix:B}.

Let us consider the class of {\em admissible arcs}
$$
\mathcal{A}_{t,x}=\{\xi\in W^{1,1}([0,t];\R^n): \xi(t)=x\}
$$
where $W^{1,1}([a,b];\R^n)$ denotes the space of all absolutely continuous $\R^n$-valued functions on $[a,b]$, where $-\infty<a<b<+\infty$. The functional 
\begin{equation}\label{eq:action-functional}
J_t(\xi):=\int^t_0L(\xi(s),\dot{\xi}(s))\ ds+u_0(\xi(0)),\quad \xi\in\mathcal{A}_{t,x},
\end{equation}
where $u_0\in C(\R^n)$ is the {\em initial cost}, is usually called the {\em action functional}. A classical problem in the calculus of variations is
\begin{equation}\label{CV}
\text{to minimize}\ J_t\ \text{over all arcs}\ \xi\in\mathcal{A}_{t,x}.\tag{CV$_{t,x}$}
\end{equation}
We define the associated value function
\begin{equation}\label{eq:value-function}
u(t,x)=\min_{\xi\in\mathcal{A}_{t,x}}J_t(\xi).
\end{equation}
It is known that $u(t,x)$ is a viscosity solution of the Cauchy problem
\begin{equation}\label{eq:HJ_evolution}
\begin{cases}
u_t(t,x)+H(x,\nabla_xu(t,x))=0,&t> 0, x\in\R^n\\
u(0,x)=u_0(x),& x\in\R^n,
\end{cases}
\end{equation}
where $\nabla_xu$ denotes the spatial gradient of $u$. From the uniqueness of viscosity solutions of \eqref{eq:HJ_evolution} if follows that, if the initial datum $u_0$ is a viscosity solution of \eqref{HJ1}, then the solution $u$ of \eqref{eq:HJ_evolution} is constant in time and coincides with $u_0$. In this case, because of the
translation invariance of problem \eqref{CV}, we have that, for all $t\geqslant 0$,
\begin{equation}\label{cv_t}
u_0(x)=\min_{\xi\in W^{1,1}([-t,0];\R^n)}\Big\{\int_{-t}^0L(\xi(s),\dot{\xi}(s))\ ds+u_0(\xi(-t))~:~\xi(0)=x\Big\}.
\end{equation}
Moreover, suppose $L$ satisfies conditions (L1)-(L3) and let $H$ be the associated Hamiltonian. Then we have the following result (see \cite{Cannarsa-Sinestrari} or \cite{Rifford}).
\begin{Pro}\label{reachable_grad_and_backward}
Let $u:\R^n\to\R$ be a viscosity solution of  \eqref{HJ1} and let  $x\in \R^n$. Then $p\in D^{\ast}u(x)$ if and only if there exists a unique $C^2$ curve $\gamma:(-\infty,0]\to \R^n$ with $\gamma(0)=x$ which is a minimizer of the problem in \eqref{cv_t} for every $t\geqslant 0$ and $p=L_v(x,\dot{\gamma}(0))$.
\end{Pro}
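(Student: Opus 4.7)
The plan is to prove the two implications separately, using the dynamic programming formula \eqref{cv_t} together with Tonelli's classical regularity theorem for action minimizers.

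For $(\Leftarrow)$, I would assume $\gamma$ is a $C^2$ curve that minimizes \eqref{cv_t} for every $t\geq 0$ with $\gamma(0)=x$. For each $\sigma\in(0,t)$ the restriction $\gamma|_{[-t,-\sigma]}$ is itself a minimizer ending at $\gamma(-\sigma)$, and by the standard argument combining \eqref{cv_t} with the semiconcavity/supersolution inequalities, at such an interior point of a minimizer $u$ is differentiable with $Du(\gamma(-\sigma))=L_v(\gamma(-\sigma),\dot\gamma(-\sigma))$ (both inclusions $L_v(\gamma(-\sigma),\dot\gamma(-\sigma))\in D^{\pm}u(\gamma(-\sigma))$ following from linear test perturbations in \eqref{cv_t}). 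Letting $\sigma\to 0^+$ and invoking continuity of $L_v$ and of $\dot\gamma$, one gets $Du(\gamma(-\sigma))\to L_v(x,\dot\gamma(0))=p$, exhibiting $p$ as a reachable gradient of $u$ at $x$.

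For $(\Rightarrow)$, I would fix $p\in D^\ast u(x)$ and choose $x_k\to x$ at which $u$ is differentiable with $Du(x_k)\to p$. For each $k$ and each $t>0$ the formula \eqref{cv_t} produces a minimizer $\gamma_k^t\in\mathcal{A}_{t,x_k}$; at the differentiability point $x_k$ such a minimizer is unique and satisfies $\dot\gamma_k^t(0)=H_p(x_k,Du(x_k))$, and Bellman's principle lets one glue the family $\{\gamma_k^t\}_{t>0}$ into a single curve $\gamma_k:(-\infty,0]\to\R^n$. The global Lipschitz bound on $u$ combined with (L2) and the uniform velocity estimate for minimizers proved in the first appendix yields $\|\dot\gamma_k\|_{L^\infty([-T,0])}\leq M_T$ uniformly in $k$, for every $T>0$. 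A diagonal Arzel\`a--Ascoli extraction then produces a limit $\gamma:(-\infty,0]\to\R^n$ with $\gamma(0)=x$, and weak lower semicontinuity of the action plus continuity of $u$ at $\gamma(-t)$ show that $\gamma$ is a minimizer on each $[-t,0]$. Tonelli's regularity theorem gives $\gamma\in C^2$ solving the Euler--Lagrange equation with $\dot\gamma(0)=\lim_k H_p(x_k,Du(x_k))=H_p(x,p)$, hence $p=L_v(x,\dot\gamma(0))$.

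Uniqueness holds because any such curve solves the Euler--Lagrange equation with initial data $(x,H_p(x,p))$ determined by $p$, and Cauchy--Lipschitz forces $C^2$ solutions with identical initial data to coincide. The main obstacle is precisely the passage from finite-horizon minimizers $\gamma_k^t$ to an infinite-horizon minimizer on $(-\infty,0]$: this requires the uniform-in-$k$ $L^\infty$ bound on $\dot\gamma_k$ on every compact subinterval (combining the Lipschitz constant of $u$ with the growth condition (L2) through the estimate deferred to the first appendix), together with a diagonal procedure that patches the subsequential limits across an exhausting family of time intervals into a single curve.
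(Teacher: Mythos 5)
The paper states Proposition~\ref{reachable_grad_and_backward} without a proof of its own, simply citing \cite{Cannarsa-Sinestrari} and \cite{Rifford}, so there is no internal argument to compare against; your proposal reconstructs the standard argument underlying those references, and it is essentially correct. The $(\Leftarrow)$ step via differentiability of $u$ at interior points of a calibrated curve, the $(\Rightarrow)$ step via the unique backward extremal from each differentiability point $x_k$, the uniform velocity bound from Appendix~\ref{App.A} combined with ${\rm Lip}(u)$ and (L2), the diagonal Arzel\`a--Ascoli extraction, and the uniqueness via Cauchy--Lipschitz for the Euler--Lagrange flow are all the right ingredients in the right order.

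Two points deserve tightening. First, in $(\Leftarrow)$ the paper's definition of $D^*u(x)$ requires the approximating sequence to lie in $\R^n\setminus\{x\}$, so you must dispose of the degenerate case where $\gamma(-\sigma)=x$ for arbitrarily small $\sigma>0$. For a $C^2$ extremal this forces $\gamma\equiv x$; your first-order variation still gives $p=L_v(x,0)\in D^-u(x)$, hence differentiability of $u$ at $x$ by semiconcavity, and then $p\in D^*u(x)$ follows from density of differentiability points together with upper semicontinuity of $x\rightsquigarrow D^+u(x)$ --- but this branch should be stated. Second, in $(\Rightarrow)$ Proposition~\ref{Main_bound_Lem} only bounds minimizers of the fixed-endpoint problem $A_T(\gamma_k(-T),x_k)$, so to invoke it uniformly in $k$ and $T$ you need the preliminary bound $|\gamma_k(-T)-x_k|\leqslant CT$ obtained from \eqref{cv_t}, ${\rm Lip}(u)$, (L2) and Young's inequality, exactly as in Lemma~\ref{sup_max}; you gesture at this with ``combining the Lipschitz constant of $u$ with the growth condition,'' but since that is precisely what produces the uniform-in-$k$ constant $M_T$, it is worth writing out.
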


\subsection{Generalized characteristics}
The study of the structure of the singular set of a viscosity solution is a very important and hard one in many fields such as Riemannian geometry, optimal control, classical mechanics, etc. The dynamics of singularities can be described by using generalized characteristics.

\begin{defn}
A Lipschitz arc $\mathbf{x}:[0,T]\to\R^n,\,(T>0),$ is said to be a {\em generalized characteristic} of the Hamilton-Jacobi equation \eqref{HJ1} if $\mathbf{x}$ satisfies the differential inclusion
\begin{equation}\label{generalized_characteristics}
\dot{\mathbf{x}}(s)\in\mathrm{co}\, H_p\big(\mathbf{x}(s),D^+u(\mathbf{x}(s))\big),\quad \text{a.e.}\;s\in[0,T]\,.
\end{equation}
\end{defn}

A basic criterion for the propagation of singularities along generalized characteristics was given in \cite{Albano-Cannarsa} (see \cite{Cannarsa-Yu,Yu} for an improved version and simplified proof of this result).
\begin{Pro}[\cite{Albano-Cannarsa}]\label{criterion_on_gen_char}
Let $u$ be a viscosity solution of \eqref{HJ1} and let $x_0\in\R^n$. Then there exists a generalized characteristic  $\mathbf{x}:[0,T]\to\R^n$ with initial point $\mathbf{x}(0)=x_0$. Moreover, if
$x_0\in\Singu$, then $\tau\in(0,T)$ exists such that $\mathbf{x}(s)\in\Singu$ for all $s\in [0,\tau]$. Furthermore, if
\begin{equation}\label{condition_for_propagation_singularities}
0\not\in\mathrm{co} \,H_p(x_0,D^+u(x_0))\,,
\end{equation}
 then  $\mathbf{x}(s)\neq x_0$ for every $s\in[0,\tau]$.
\end{Pro}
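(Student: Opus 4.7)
The statement has three parts. The first two (existence of a generalized characteristic, and the fact that the curve leaves $x_0$ under \eqref{condition_for_propagation_singularities}) are relatively soft consequences of the structure of $D^+u$, while the third (preservation of the singularity on a positive time interval) is the hard one, which I would handle by a regularization-and-limit argument in the spirit of \cite{Albano-Cannarsa}.

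For the existence part, I would first show that the set-valued map
$$
F(x):=\mathrm{co}\,H_p\bigl(x,D^+u(x)\bigr)
$$
has nonempty compact convex values and is upper semicontinuous on $\R^n$. This combines Proposition~\ref{basic_facts_of_superdifferential}(a),(b), the continuity of $H_p$ (which follows from (L1)--(L3) via Lemma~\ref{uniform_condition_H}), and the fact that the convex hull of a compact subset of $\R^n$ is compact. Since $u$ is globally Lipschitz, $D^+u$ is locally bounded, hence so is $F$. A standard existence theorem for differential inclusions with upper-semicontinuous right-hand side and nonempty, compact, convex values (Filippov/Aubin--Cellina type) then provides a Lipschitz solution $\mathbf{x}\colon[0,T]\to\R^n$ of \eqref{generalized_characteristics} with $\mathbf{x}(0)=x_0$.

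For the non-stationarity assertion under \eqref{condition_for_propagation_singularities}, I would use Hahn--Banach separation: since $F(x_0)$ is convex, compact and does not contain the origin, there exist $e\in\R^n$ with $|e|=1$ and $\alpha>0$ such that $\langle\xi,e\rangle\geqslant 2\alpha$ for every $\xi\in F(x_0)$. Upper semicontinuity of $F$ yields $r>0$ with $\langle\xi,e\rangle\geqslant\alpha$ for every $\xi\in F(y)$ and every $y\in B(x_0,r)$. Shrinking $T$ so that $\mathbf{x}([0,T])\subset B(x_0,r)$ and integrating \eqref{generalized_characteristics} against $e$ gives $\langle\mathbf{x}(s)-x_0,e\rangle\geqslant\alpha s$ for $s\in[0,T]$, whence $\mathbf{x}(s)\neq x_0$ for $s\in(0,T]$.

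The main obstacle is the preservation of the singularity, and the plan is to construct $\mathbf{x}$ (on a possibly smaller interval) as a limit of classical flows. Replace $u$ by its inf-convolution
$$
u_\varepsilon(y):=\inf_{z\in\R^n}\Bigl\{u(z)+\tfrac{1}{2\varepsilon}|y-z|^2\Bigr\},
$$
which is $C^{1,1}_{\mathrm{loc}}$, semiconcave, converges to $u$ locally uniformly as $\varepsilon\to 0^+$, and whose unique proximal point $z_\varepsilon(y)$ satisfies $Du_\varepsilon(y)=(y-z_\varepsilon(y))/\varepsilon\in D^+u(z_\varepsilon(y))$. I would solve the classical Cauchy problem $\dot\eta_\varepsilon(s)=H_p(\eta_\varepsilon(s),Du_\varepsilon(\eta_\varepsilon(s)))$ with $\eta_\varepsilon(0)=x_0$, use the Lipschitz constant of $u$ together with (H3) to obtain a uniform Lipschitz bound on $\eta_\varepsilon$, and extract via Arzel\`a--Ascoli a limit $\mathbf{x}$. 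Upper semicontinuity of $D^+u$ and continuity of $H_p$ then identify the weak limit of $\dot\eta_\varepsilon$ with an a.e.\ selection of $F(\mathbf{x}(\cdot))$, so $\mathbf{x}$ solves \eqref{generalized_characteristics}. The hard step is to show $\mathbf{x}(s)\in\Singu$ on a positive time interval: one has to exhibit, for each small $s>0$, two distinct elements of $D^+u(\mathbf{x}(s))$. One is the limit of the approximating gradients $Du_\varepsilon(\eta_\varepsilon(s))$, which lie in $D^+u(z_\varepsilon(\eta_\varepsilon(s)))$; the other is produced by exploiting that $D^+u(x_0)$ is not a singleton and propagating a second superdifferential vector forward via the upper semicontinuity of $D^+u$ and the semiconcavity modulus of $u$. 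Keeping these two limit vectors apart as $\varepsilon\to 0^+$ is the delicate quantitative point, and this is where the semiconcavity estimate for $u$ together with the local $C^{1,1}$ regularity of the fundamental solution $A_t$ (stated in the convexity theorem of the introduction) play a decisive role.
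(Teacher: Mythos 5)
Your parts (1) and (2) are fine: Filippov--Aubin--Cellina existence for the differential inclusion with the usc, compact-convex-valued right-hand side $F(x)=\mathrm{co}\,H_p(x,D^+u(x))$, and Hahn--Banach separation combined with upper semicontinuity for the non-stationarity assertion, are both standard and correct. Note only that the proposition asks for a \emph{single} curve enjoying all three properties; since solutions of the inclusion are not unique, you should run the separation argument on the specific arc produced by the regularization in part (3), not on an abstractly-obtained Filippov solution.

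Part (3), however, contains a genuine gap. You propose to exhibit two distinct elements of $D^+u(\mathbf{x}(s))$ by taking one from the limit of $Du_\varepsilon(\eta_\varepsilon(s))$ and obtaining the other by ``propagating a second superdifferential vector forward via the upper semicontinuity of $D^+u$.'' This cannot work: upper semicontinuity of $x\rightsquigarrow D^+u(x)$ is an \emph{outer} semicontinuity property --- it bounds $D^+u$ at nearby points from outside but gives no lower bound on its size, so knowing $D^+u(x_0)$ is not a singleton tells you nothing about $D^+u(y)$ for $y$ near $x_0$ (the superdifferential can, and generically does, collapse to a point arbitrarily close to a singularity). The argument in \cite{Albano-Cannarsa} and its simplification in \cite{Yu} do not try to produce two superdifferential vectors directly. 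Instead they use the \emph{energy criterion}: for a semiconcave viscosity solution, $H(y,p)\leqslant 0$ on $D^+u(y)$ with equality on $D^*u(y)\subset\partial D^+u(y)$, and strict convexity of $H(y,\cdot)$ then gives $y\in\Singu$ if and only if $\min_{p\in D^+u(y)}H(y,p)<0$. One then selects at each step the minimal-energy element $p$ and proves a quantitative estimate of the type $H(\mathbf{x}(s),\mathbf{p}(s))\leqslant H(x_0,p_0)+Cs$ (compare inequality \eqref{eq:Holder_esitmate_p} in the paper's own construction); since $H(x_0,p_0)<0$ when $x_0\in\Singu$, the minimal energy stays negative on $[0,\tau]$ for $\tau$ small, forcing $\mathbf{x}(s)\in\Singu$. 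Finally, your appeal to the local $C^{1,1}$ regularity of the fundamental solution $A_t$ is misplaced here: Proposition~\ref{criterion_on_gen_char} is a cited result about arbitrary semiconcave viscosity solutions and its proof in \cite{Albano-Cannarsa} makes no use of the fundamental solution --- the $A_t$-machinery enters only in the paper's new sup-convolution construction (Lemma~\ref{maximizer_singular}, Proposition~\ref{Main_lemma_g_c}), not in this preliminary.
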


Condition \eqref{condition_for_propagation_singularities} is the key point to guarantee the propagation of singularities along generalized characteristics. For the cell problem with $L$ in the form $$L(x,v)=\frac 12\langle A(x)v,v\rangle-V(x)+E,$$ a local propagation result can be obtained replacing assumption \eqref{condition_for_propagation_singularities} by the energy condition $E>\max_{x\in\R^n}V(x)$ (see \cite{Cannarsa-Cheng}).

%\section{Regularity properties of the fundamental solution}\label{sec:regularity}

%		\begin{Rem}
%		In the reversible case, i.e., if $L(x,v)=L(x,-v)$ for all $x,v\in\R^n$, it is easily checked that $A_t(x,y)=A_t(y,x)$ for all $x,y\in\R^n$. 
%		\end{Rem}

\section{Generalized characteristics and Lax-Oleinik operators}
For any $t>0$, given $x, y\in\R^n$, we set
$$
\Gamma^t_{x,y}=\{{\xi\in W^{1,1}([0,t];\R^n): \xi(0)=x,\xi(t)=y}\}
$$
and define
\begin{equation}\label{fundamental_solution}
A_{t}(x,y)=\min_{\xi\in\Gamma^t_{x,y}}\int^{t}_{0}L(\xi(s),\dot{\xi}(s))ds\qquad (x, y\in\R^n).
\end{equation}
The existence of the above minimum is a well-known result in Tonelli's theory (see, for instance, \cite{Fathi-book}). Any $\xi\in\Gamma^t_{x,y}$ at which the minimum in \eqref{fundamental_solution} will be called a {\em minimizer} for $A_t(x,y)$ and such a minimizer $\xi$ is of class $C^2$ by classical results. In the PDE literature, $A_t(x,y)$ is also called the {\em fundamental solution} of \eqref{HJ1}, see, for instance, \cite{McEneaney-Dower}.

Let $L$ be a Tonelli Lagrangian satisfying {\rm (L1)-(L3)}  and let $H$ be the associated Hamiltonian. In this section we study the singularities of a Lipschitz continuous semiconcave solution $u$  of the Hamilton-Jacobi equation 
\begin{equation}\label{eq:HJ-general}
H(x,Du(x))=0,\quad x\in\R^n.
\end{equation}
The existence of such a solution is guaranteed by Proposition~\ref{Ext_and_reachable}.
	
\subsection{Lax-Oleinik operators}
For any Lipschitz continuous function  $u:\R^n\to\R$ we set
\begin{equation}\label{eq:Lip}
{\rm Lip} (u)=\sup_{y\neq x}\frac{|u(y)-u(x)|}{|y-x|}\,.
\end{equation}\label{eq:barrier}
For all  $(t,x)\in\R_+\times\R^n$  let
\begin{equation}
\label{eq:phipsi}
\phi^x_t(y)=u(y)-A_t(x,y)\quad\mbox{and}\quad \psi^x_t(y) =u(y)+A_t(y,x)\qquad(y\in\R^n)
\end{equation}
where $A_t$ is the fundamental solution of \eqref{eq:HJ-general}. The Lax-Oleinik operators $T^-_t$ and $T^+_t$ are defined as follows
\begin{gather}
T^+_tu(x)=\sup_{y\in\R^n}\phi^x_t(y)
,\quad x\in\R^n,
\label{L-L regularity_sup}\\
T^-_tu(x)=\inf_{y\in\R^n}\psi^x_t(y)
,\quad x\in\R^n.
\label{L-L regularity_inf}
\end{gather}
The functions $\phi^x_t$ and $\psi^x_t$ are also called {\em local barrier functions}. 

\begin{Lem}\label{sup_max}
Suppose $L$ is a Tonelli Lagrangian and
let $u$ be a Lipschitz function on $\R^n$. Then the supremum in \eqref{L-L regularity_sup} is attained for every $(t,x)\in\R_+\times\R^n$. Moreover, there exists a  constant $\lambda_0>0$, depending only on ${\rm Lip} (u)$, such that, for any $(t,x)\in\R_+\times\R^n$ and any maximum point $y_{t,x}$ of $\phi^x_t$, we have
\begin{equation}
\label{eq:maxbound}
	|y_{t,x}-x|\leqslant \lambda_0t\,.
\end{equation}
\end{Lem}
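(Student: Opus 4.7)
The plan is to exploit the superlinearity of $L$ from (L2) to produce a coercive lower bound on $A_t(x,\cdot)$ which, when pitted against the at-most-linear growth of $u$, will force any maximizer of $\phi^x_t$ into a ball centered at $x$ of radius proportional to $t$. Existence of a maximizer and the quantitative estimate \eqref{eq:maxbound} will then both come out of the same calculation.

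First I would record the elementary fact that superlinearity of $\theta$ is equivalent to the following linear-with-loss bound: for every $K>0$ there is a constant $C_K\geqslant 0$ with
\[
\theta(r)\geqslant Kr-C_K\qquad\forall\,r\geqslant 0.
\]
Plugging this into the definition of $A_t$ together with (L2), and using $\int_0^t|\dot\xi(s)|\,ds\geqslant|y-x|$ for any admissible $\xi\in\Gamma^t_{x,y}$, one obtains the coercive lower bound
\[
A_t(x,y)\geqslant K\,|y-x|-(C_K+c_0)\,t \qquad \forall\,x,y\in\R^n,\ t>0.
\]
In the opposite direction, taking the constant curve $\xi\equiv x$ as a competitor gives the trivial upper bound $A_t(x,x)\leqslant tL(x,0)\leqslant\overline\theta(0)\,t$.

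These two bounds now match up as follows. The trivial estimate yields $\phi^x_t(x)\geqslant u(x)-\overline\theta(0)\,t$, so any $z\in\R^n$ with $\phi^x_t(z)\geqslant u(x)-\overline\theta(0)\,t$ (in particular any maximum point of $\phi^x_t$, whenever one exists) satisfies
\[
A_t(x,z)\leqslant u(z)-u(x)+\overline\theta(0)\,t\leqslant\mathrm{Lip}(u)\,|z-x|+\overline\theta(0)\,t.
\]
Combined with the coercive lower bound this reads $(K-\mathrm{Lip}(u))\,|z-x|\leqslant(C_K+c_0+\overline\theta(0))\,t$, and choosing $K:=\mathrm{Lip}(u)+1$ makes the coefficient on the left positive and depending only on $\mathrm{Lip}(u)$. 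This furnishes \eqref{eq:maxbound} with $\lambda_0:=C_K+c_0+\overline\theta(0)$. The same inequality shows that $\phi^x_t(z)<u(x)-\overline\theta(0)\,t$ outside the closed ball $\overline{B}(x,\lambda_0 t)$, so the supremum in \eqref{L-L regularity_sup} coincides with the supremum over this compact ball, and is attained there by continuity of $y\mapsto A_t(x,y)$, which is a standard consequence of Tonelli's theorem. The only nontrivial ingredient is the linearization of the superlinear function $\theta$, a routine one-variable exercise, so I expect no serious obstacle: the whole argument boils down to superlinear coercivity of $L$ beating the Lipschitz growth of $u$.
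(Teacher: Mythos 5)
Your proposal is correct and is essentially the paper's own argument: the "linear-with-loss" bound $\theta(r)\geqslant Kr-C_K$ is just the Fenchel--Young inequality $\theta(r)\geqslant Kr-\theta^*(K)$ that the paper invokes, and both proofs set $K=\mathrm{Lip}(u)+1$ to derive a coercive lower bound on $A_t(x,\cdot)$ that confines the superlevel set $\{\phi^x_t\geqslant\phi^x_t(x)\}$ to $\overline{B}(x,\lambda_0 t)$, from which existence of a maximizer and the bound \eqref{eq:maxbound} follow simultaneously. The only cosmetic differences are bookkeeping (you bound $L(x,0)$ by $\overline\theta(0)$ while the paper uses $K(0)$), which affect the explicit form of $\lambda_0$ but not the argument.
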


\begin{proof}
	Let $k_u={\rm Lip}(u)+1$. Then, for any $(t,x)\in\R_+\times\R^n$ and $y\in\R^n$,  \eqref{eq:young} yields
	\begin{align*}
		A_t(x,y)\geqslant&\inf_{\xi\in\Gamma^t_{x,y}}\int^t_0\theta(|\dot{\xi}(s)|)\ ds-c_0t\geqslant\inf_{\xi\in\Gamma^t_{x,y}}k_u\int^t_0|\dot{\xi}(s)|\ ds-\big(\theta^*(k_u)+c_0\big)t\\
		\geqslant& k_u|y-x|-\big(\theta^*(k_u)+c_0\big)t,
	\end{align*}
where $c_0$ is the constant in assumption (L2). Therefore
	\begin{align*}
		&\phi^x_t(y)-\phi^x_t(x)=u(y)-u(x)-A_t(x,y)+A_t(x,x)\\
		\leqslant&\; {\rm Lip}(u)|y-x|-k_u|y-x|+t\big(\theta^*(k_u)+c_0\big)+tL(x,0)\\
		\leqslant&-|y-x|+t\big(\theta^*(k_u)+c_0+K(0)\big)
	\end{align*} 
	where $K$ is given by condition (L3). Now, taking $\lambda_0=\theta^*(k_u)+c_0+K(0)$ it follows that
\begin{equation}
\label{eq:levelset}
	\Lambda^x_t:=\{y:\phi^x_t(y)\geqslant \phi^x_t(x)\}\subset\overline{B}(x,\lambda_0t).
\end{equation}
	Therefore $\Lambda^x_t$ is compact and the supremum in \eqref{L-L regularity_sup} is indeed a maximum. 
	Moreover, 
	\eqref{eq:maxbound} is a consequence of \eqref{eq:levelset}.
\end{proof}

A similar result holds for the inf-convolution defined in \eqref{L-L regularity_inf}. A more detailed study of the properties of inf/sup-convolutions can be found in \cite{Attouch} with respect to the quadratic Hamiltonian $H(p)=|p|^2/2$ and, consequently, the kernel $A_t(x,y)=\frac 1{2t}|x-y|^2$. This type of regularization, also called Moreau-Yosida regularization in convex analysis,  was developed into a well-known procedure by Lasry and Lions~\cite{Lasry-Lions}. In our context, we recover more information from the dynamical systems point of view by replacing quadratic kernels with the fundamental solutions. 

\subsection{Propagation of singularities}\label{Procedure_of_sup_convolution}
In this section, we will discuss the connection between sup-convolutions, singularities, and generalized characteristics. We begin our analysis with the local propagation of singularities of viscosity solutions along generalized characteristics. For  Tonelli systems under rather general conditions, a local propagation result was obtained in \cite{alca99} by a different method, without relating singular arcs to generalized characteristics. In the following lemma, we construct a singular arc starting from any singular point of the solution. A crucial point of this result is the fact that the interval $[0,t_0]$ on which the singular arc is defined turns out to be independent of the starting point $x$.  

\begin{Lem}\label{maximizer_singular}
Let $L$ be a Tonelli Lagrangian and let $H$ be the associated Hamiltonian. Suppose $u:\R^n\to\R$ is a Lipschitz continuous semiconcave viscosity solution of \eqref{eq:HJ-general}. Then there exists $t_0$ in $(0,1]$ such that, for all  $(t,x)\in(0,t_0]\times\R^n$, there is a unique maximum point $y_{t,x}$ of $\phi^x_t$ and  the curve
\begin{equation}\label{maximizer_arc}
\mathbf{y}(t):=\begin{cases}
x&\mbox{if}\quad t=0\\
y_{t,x}&\mbox{if}\quad t\in(0,t_0]
\end{cases}
\end{equation}
satisfies $\lim_{t\to0}\mathbf{y}(t)=x$.

 Moreover, if $x\in\Singu$, then $\mathbf{y}(t)\in\Singu$ for all $t\in(0,t_0]$. \end{Lem}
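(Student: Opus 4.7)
The plan is to combine the maximizer bound from Lemma~\ref{sup_max} with the convexity and $C^{1,1}$ regularity of the fundamental solution stated in the introduction, in order to force strict concavity of $\phi^x_t$ (hence uniqueness of the maximizer), and then to propagate the singularity from $y_{t,x}$ back to $x$ via the rigidity of backward calibrating extremals.

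\emph{Uniqueness of $y_{t,x}$ and continuity of $\mathbf{y}$.} Let $\lambda_0$ be the constant from Lemma~\ref{sup_max}, so that every maximizer of $\phi^x_t$ lies in $\overline{B}(x,\lambda_0 t)$. Apply the convexity estimate (b) with $\lambda=2\lambda_0$ to obtain $t_\ast>0$ and $C^\ast>0$, independent of $x$, such that $y\mapsto A_t(x,y)$ is $C^\ast/t$-uniformly convex on $B(x,2\lambda_0 t)$ for every $t\in(0,t_\ast]$. Let $C_u$ be a global semiconcavity constant of $u$ on $\R^n$ (available since $u$ is globally semiconcave by Proposition~\ref{Ext_and_reachable}). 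On $B(x,2\lambda_0 t)$ the function $\phi^x_t=u-A_t(x,\cdot)$ is the difference of a $C_u$-semiconcave function and a $C^\ast/t$-uniformly convex one; setting
\[
t_0:=\min\bigl\{1,\,t_\ast,\,C^\ast/(2C_u)\bigr\},
\]
$\phi^x_t$ is strongly concave on the open ball $B(x,2\lambda_0 t)$ for every $t\in(0,t_0]$, and this ball strictly contains the set $\overline{B}(x,\lambda_0 t)$ where all maximizers lie. Uniqueness of $y_{t,x}$ follows, and the bound $|y_{t,x}-x|\leqslant\lambda_0 t$ immediately yields $\mathbf{y}(t)\to x$ as $t\to0^+$.

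\emph{Propagation of singularities.} Assume $x\in\Singu$ and suppose, toward a contradiction, that $u$ is differentiable at $y_{t,x}$ for some $t\in(0,t_0]$. The regularity statement (c) gives $A_t(x,\cdot)\in C^1$ near $y_{t,x}$, so the first-order condition at the maximizer reads $Du(y_{t,x})=D_yA_t(x,y_{t,x})$. Let $\xi:[0,t]\to\R^n$ be a minimizer for $A_t(x,y_{t,x})$. The classical formula $D_yA_t(x,y_{t,x})=L_v(y_{t,x},\dot\xi(t))$ combined with the previous identity yields
\[
L_v(y_{t,x},\dot\xi(t))=Du(y_{t,x})\in D^{\ast}u(y_{t,x}).
\]
Proposition~\ref{reachable_grad_and_backward} then supplies a unique $C^2$ curve $\gamma:(-\infty,0]\to\R^n$ with $\gamma(0)=y_{t,x}$, $L_v(y_{t,x},\dot\gamma(0))=Du(y_{t,x})$, and which is a minimizer on every $[-s,0]$. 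Strict convexity of $L(y_{t,x},\cdot)$ forces $\dot\gamma(0)=\dot\xi(t)$, so by uniqueness for the Euler--Lagrange flow the restriction $\gamma|_{[-t,0]}$ coincides with the time translate $s\mapsto\xi(s+t)$; in particular $\gamma(-t)=\xi(0)=x$. Since $-t$ is interior to the calibration interval of $\gamma$, a Weierstrass corner argument (any other element of $D^{\ast}u(x)$ would produce, via Proposition~\ref{reachable_grad_and_backward}, a backward extremal whose concatenation with $\gamma|_{[-t,0]}$ is a calibrated curve exhibiting a corner at $x$, contradicting the first Weierstrass--Erdmann condition) gives $D^{\ast}u(x)=\{L_v(x,\dot\gamma(-t))\}$. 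By Proposition~\ref{basic_facts_of_superdifferential}(d), $D^+u(x)=\mathrm{co}\,D^{\ast}u(x)$ is a singleton, contradicting $x\in\Singu$.

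\emph{Main obstacle.} The delicate point is the first step: uniqueness of the maximizer $y_{t,x}$, which hinges on the short-time, locally uniform convexity of $A_t(x,\cdot)$ overriding the semiconcavity constant of $u$. Once this is established, the $C^{1,1}$ regularity of $A_t$ turns the maximality condition into the clean identity $Du(y_{t,x})=L_v(y_{t,x},\dot\xi(t))$, and the propagation of the singularity follows from the classical rigidity of Tonelli extremals.
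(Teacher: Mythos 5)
Your proof is correct and follows essentially the same route as the paper: uniqueness of the maximizer via the uniform convexity of $A_t(x,\cdot)$ (Proposition~\ref{convexity_A_t}) overriding the semiconcavity of $u$ for short time, and propagation via Proposition~\ref{reachable_grad_and_backward} to build a backward calibrating extremal through $x$. The only difference is cosmetic: the paper directly invokes the standard fact that $u$ is differentiable at interior points of a backward calibrating curve, whereas you re-derive it through the Weierstrass--Erdmann corner condition applied to the calibrated concatenation at $x$ --- a valid, slightly more explicit version of the same step.
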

\begin{proof}
Let  $C_1>0$ be a semiconcavity constant for $u$ on $\R^n$ and let $\lambda_0$ be the positive constant in Lemma~\ref{sup_max}. By  Proposition~\ref{convexity_A_t} with $\lambda=1+\lambda_0$, we deduce that there exists $t_0\in(0,1]$ and a constant $C_2>0$ such that for every $(t,x)\in (0,t_0]\times\R^n$, every 
$y\in B(x,\lambda t)$, and every $z\in B(0,\lambda t)$ we have
 that
\begin{equation*}
A_{t}(x,y+z)+A_{t}(x,y-z)-2A_t(x,y)\geqslant \frac{C_2}{t}|z|^2.
\end{equation*}
Thus,  $\phi^x_t(y)=u(y)-A_t(x,y)$ is strictly concave on $\overline{B}(x,\lambda t)$ for all $t\in(0,t_0]$ provided that we further restrict $t_0$ in order to have 
\begin{equation}
\label{eq:t_0}
t_0<\frac{C_2}{C_1}.
\end{equation}
Then, for all such numbers $t$, there exists a unique maximum point $y_{t,x}$ of $\phi^x_t$ in $\overline{B}(x,\lambda t)$. In fact, $y_{t,x}$ is an interior point of $B(x,\lambda t)$ since, by Lemma \ref{sup_max}, we have that $|y_t-x|\leqslant\lambda_0 t$.

We now prove that $y_{t,x}$ is a singular point of $u$ for every $t\in(0,t_0]$.
Let $\xi_{t,x}\in\Gamma^t_{x,y_{t,x}}$ be the unique minimizer for $A_t(x,y_{t,x})$ and let
$$
p_{t,x}(s):=L_v(\xi_{t,x}(s),\dot{\xi}_{t,x}(s)),\quad s\in[0,t_0],
$$
be the associated dual arc. We claim that 
\begin{equation}\label{eq:criterion_singularity}
p_{t,x}(t)\in D^+u(y_{t,x})\setminus D^*u(y_{t,x}),
\end{equation}
which in turn yields $y_{t,x}\in\Singu$. Indeed, if $p_{t,x}(t)\in D^*u(y_{t,x})$, then by Proposition~\ref{reachable_grad_and_backward} there would exist a $C^2$ curve $\gamma_{t,x}:(-\infty,t]\to\R^n$ solving the minimum problem 
$$
\min_{\gamma\in W^{1,1}([\tau,t];\R^n)}\Big\{\int_{\tau}^tL(\gamma(s),\dot{\gamma}(s))\ ds+u(\gamma(\tau))~:~\gamma(t)=y_{t,x}\Big\}
$$
for all $\tau\leqslant t$. It is easily checked that $\gamma_{t,x}$ and $\xi_{t,x}$ coincide on $[0,t]$ since both of them are  extremal curves for $L$ and satisfy the same endpoint condition at $y_{t,x}$, i.e., 
 $$L_v(\xi_{t,x}(t),\dot{\xi}_{t,x}(t))=p_{t,x}(t)=L_v(\gamma_{t,x}(t),\dot{\gamma}_{t,x}(t)).$$ This leads to a contradiction since $x\in \Singu$ while $u$ should be smooth at $\gamma_{t,x}(0)$. Thus, \eqref{eq:criterion_singularity} holds true and $y_t\in\Singu$.
\end{proof}

Next, we proceed to show that the singular arc in Lemma~\ref{maximizer_singular} is a generalized characteristic. %For this, we need three preliminary lemmas.

\begin{Lem}\label{uni_Lip_1}
Let $t_0$ and $\mathbf{y}$ be given by Lemma~\ref{maximizer_singular} for a given $x\in\R^n$. For any $t\in(0,t_0]$ let $\xi_{t,x}\in\Gamma^t_{x,\mathbf{y}(t)}$ be a minimizer for $A_t(x,\mathbf{y}(t))$. Then
\begin{equation}\label{eq:equi-Lipschitz}
\{\dot{\xi}_{t,x}(\cdot)\}_{t\in(0,t_0]}\ \text{is an equi-Lipschitz family}.
\end{equation}
\end{Lem}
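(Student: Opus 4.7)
The plan is to produce a uniform $L^\infty$ bound on $\ddot\xi_{t,x}$ that is independent of $t\in(0,t_0]$, which immediately yields the equi-Lipschitz property of the family $\{\dot\xi_{t,x}\}$.

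First I would bound $|\dot\xi_{t,x}|$ uniformly on $[0,t]$. By Lemma~\ref{sup_max} the endpoint satisfies $|\mathbf{y}(t)-x|\leqslant \lambda_0 t$, so the average velocity of $\xi_{t,x}$ is at most $\lambda_0$. The $C^2$ minimizer $\xi_{t,x}$ is therefore a Tonelli extremal joining $x$ to a point at distance at most $\lambda_0 t$ in time $t$, and the uniform a priori bound for minimizers stated in Appendix~A (the Ambrosio--Ascenzi--Buttazzo/Dal Maso--Frankowska type estimate recalled there) gives a constant $M=M(\lambda_0,L)$, depending only on $\lambda_0$ and the data $\nu,\theta,\overline\theta,c_0,K$ in (L1)--(L3), such that
\begin{equation*}
|\dot\xi_{t,x}(s)|\leqslant M\qquad\forall s\in[0,t],\ \forall t\in(0,t_0].
\end{equation*}

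Next, since $\xi_{t,x}$ is $C^2$, it solves the Euler--Lagrange equation
\begin{equation*}
\frac{d}{ds}L_v(\xi_{t,x}(s),\dot\xi_{t,x}(s))=L_x(\xi_{t,x}(s),\dot\xi_{t,x}(s)),
\end{equation*}
which after expanding yields
\begin{equation*}
\ddot\xi_{t,x}(s)=L_{vv}(\xi_{t,x},\dot\xi_{t,x})^{-1}\bigl[L_x(\xi_{t,x},\dot\xi_{t,x})-L_{vx}(\xi_{t,x},\dot\xi_{t,x})\,\dot\xi_{t,x}\bigr].
\end{equation*}
Using the bound $|\dot\xi_{t,x}(s)|\leqslant M$ together with (L1) we get $L_{vv}\geqslant \nu(M)I$, hence $\|L_{vv}^{-1}\|\leqslant 1/\nu(M)$; and (L3) provides $|L_x|\leqslant K(M)$ and $|L_{vx}|\leqslant K(M)$. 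Consequently there exists a constant $\kappa=\kappa(M,\nu,K)$, independent of $t\in(0,t_0]$ and of $s\in[0,t]$, such that
\begin{equation*}
|\ddot\xi_{t,x}(s)|\leqslant \kappa,
\end{equation*}
which is exactly the equi-Lipschitz property \eqref{eq:equi-Lipschitz}.

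The only nonroutine step is the uniform velocity bound in the first paragraph, because the time horizon $t$ shrinks to $0$ and one must rule out concentration of velocity on short subintervals despite the fact that the chord $|\mathbf{y}(t)-x|$ also vanishes. This is the content of the a priori regularity estimates collected in Appendix~A, which apply precisely because (L1)--(L3) control $L$ uniformly in $x$; once that is granted, the Euler--Lagrange identity together with (L1) and (L3) closes the argument in a purely algebraic fashion.
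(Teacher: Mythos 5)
Your proof is correct and takes essentially the same approach as the paper: both arguments first invoke the Appendix~A a priori bound (via Lemma~\ref{sup_max}, since $|\mathbf{y}(t)-x|\leqslant\lambda_0 t$ gives $|\dot{\xi}_{t,x}|\leqslant\kappa(\lambda_0)$ uniformly in $t$), and then bound $\ddot{\xi}_{t,x}$ on the resulting compact set using the regularity conditions. The only cosmetic difference is that you close the argument with the Euler--Lagrange identity $\ddot{\xi}=L_{vv}^{-1}(L_x-L_{vx}\dot{\xi})$ in Lagrangian coordinates, whereas the paper writes $\ddot{\xi}=H_{px}\dot{\xi}+H_{pp}\dot{p}$ from the Hamiltonian system together with the bound on $\dot{p}=-H_x$; by the standard Legendre identities these are the same expression, so the two arguments are equivalent.
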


\begin{proof}
Since $\mathbf{y}(t)\in\overline{B}(x,\lambda t)$ by Lemma \ref{sup_max}, we have that  $(\xi_{t,x}(s),p_{t,x}(s))\in\mathbf{K}^*_{x,\lambda_0}$ for all $s\in[0,t]$, where the compact set $\mathbf{K}^*_{x,\lambda_0}$ is defined in \eqref{eq:K}. Therefore, being solutions of the Hamiltonian system
\begin{equation*}
\begin{cases}
 \dot{\xi}_{t,x}(s)=H_p(\xi_{t,x}(s),p_{t,x}(s))
 \\
 \dot{p}_{t,x}(s)=-H_x(\xi_{t,x}(s),p_{t,x}(s))
\end{cases}
\qquad s\in[0,t],
\end{equation*}
both $\{\dot{\xi}_{t,x}(\cdot)\}_{t\in(0,t_0]}$ and $\{\dot{p}_{t,x}(\cdot)\}_{t\in(0,t_0]}$ are uniformly bounded. Consequently, 
$$
\ddot{\xi}_{t,x}(s)=H_{px}(\xi_{t,x}(s),p_{t,x}(s))\dot{\xi}_{t,x}(s)+H_{pp}(\xi_{t,x}(s),p_{t,x}(s))\dot{p}_{t,x}(s) \qquad(s\in[0,t])
$$
is also bounded, uniformly for  $t\in(0,t_0]$.
\end{proof}

\begin{Pro}\label{Main_lemma_g_c}
Let $L$ be a Tonelli Lagrangian. Let $t_0\in(0,1]$ be given by Lemma \ref{maximizer_singular}. For any fixed $x\in\R^n$, let $\mathbf{y}:[0,t_0]\to\R^n$ be the curve constructed in Lemma \ref{maximizer_singular}. Then
\begin{enumerate}[\rm (a)]
  \item $\mathbf{y}$ is Lipschitz on $[0,t_0]$. 
\end{enumerate}
Moreover, for any $t\in(0,t_0]$, let $\xi_{t,x}\in\Gamma^t_{x,\mathbf{y}(t)}$ be a minimizer for $A_t(x,\mathbf{y}(t))$. Then the following properties hold true:
\begin{enumerate}[\rm (a)]\setliststart{2}
  \item The right derivative $\dot{\mathbf{y}}^+(0)$ exists and
  \begin{equation}\label{eq:limit_1}
  	  \dot{\mathbf{y}}^+(0)=\lim_{t\to0^+}\dot{\xi}_{t,x}(t)=H_p(x,p_x)
  \end{equation}
  where $p_x$ is the unique element of $D^+u(x)$ such that
  $$
  H(x,p)\geqslant H(x,p_x),\quad \forall p\in D^+u(x).
  $$
  \item The arc $\mathbf{p}(t):=L_v(\xi_{t,x}(t),\dot{\xi}_{t,x}(t))$ is continuos on $(0,t_0]$ and $\lim_{t\to0^+}\mathbf{p}(t)=p_x$.  
  \item There exist $0<\rho\leqslant t_0$ and constants $C_1,C_2>0$ such that
    \begin{equation}\label{eq:Holder_esitmate_p}
    	H(\mathbf{y}(t),\mathbf{p}(t))\leqslant H(x,p_x)+C_1t-C_2|\mathbf{p}(t)-p_x|^2,\quad \forall t\in(0,\rho].
    \end{equation}
\end{enumerate}
\end{Pro}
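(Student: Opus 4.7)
My plan is to exploit the single identification that $\mathbf{y}(t)=y_{t,x}$ is an interior maximizer of the semiconcave function $\phi^x_t=u-A_t(x,\cdot)$, with $A_t(x,\cdot)$ of class $C^{1,1}$ near $\mathbf{y}(t)$ on the cone. The first-order condition $0\in D^+\phi^x_t(\mathbf{y}(t))$ gives $\nabla_y A_t(x,\mathbf{y}(t))\in D^+u(\mathbf{y}(t))$, and the classical relation $\nabla_y A_t(x,y)=L_v(\xi_{t,x}(t),\dot{\xi}_{t,x}(t))$ produces the crucial membership $\mathbf{p}(t)\in D^+u(\mathbf{y}(t))$ that will drive every assertion. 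I would treat the parts in the order (a), the continuity assertion in (c), then the limits in (c) and (b), and finally (d).

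For (a), I would use the strict concavity of $\phi^x_t$ with modulus $\mu(t):=C_2/t-C_1\geq \mu_0>0$, valid on $(0,t_0]$ thanks to the choice $t_0<C_2/C_1$ made in Lemma~\ref{maximizer_singular}. Summing the two maximality inequalities $\phi^x_t(\mathbf{y}(t))\geq\phi^x_t(\mathbf{y}(s))$ and $\phi^x_s(\mathbf{y}(s))\geq\phi^x_s(\mathbf{y}(t))$ with the strict-concavity quadratic correction yields
\[
\bigl[A_t(x,\mathbf{y}(s))-A_s(x,\mathbf{y}(s))\bigr]-\bigl[A_t(x,\mathbf{y}(t))-A_s(x,\mathbf{y}(t))\bigr]\geq\tfrac{\mu(t)+\mu(s)}{2}|\mathbf{y}(t)-\mathbf{y}(s)|^2.
\]
Since $(t,y)\mapsto A_t(x,y)$ is $C^{1,1}_{\mathrm{loc}}$ on the cone, the left-hand side is controlled by $C|t-s|\cdot|\mathbf{y}(t)-\mathbf{y}(s)|$, and dividing gives the desired Lipschitz bound on $(0,t_0]$; extension to $t=0$ uses $\mathbf{y}(t)\to x$. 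The continuity of $\mathbf{p}$ on $(0,t_0]$ asserted in (c) is then immediate from $\mathbf{p}(t)=\nabla_y A_t(x,\mathbf{y}(t))$ and the joint $C^{1,1}_{\mathrm{loc}}$ regularity of $A_t$.

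To identify the limit $\mathbf{p}(t)\to p_x$ in (c) and then (b), I would derive matching asymptotics of $\phi^x_t(\mathbf{y}(t))-u(x)$ as $t\to 0^+$. Integration by parts against the dual arc $p_{t,x}$, combined with conservation of energy $H(\xi_{t,x}(s),p_{t,x}(s))\equiv H(\mathbf{y}(t),\mathbf{p}(t))$ and $\dot{p}_{t,x}=-H_x(\xi_{t,x},p_{t,x})$, gives the expansion
\[
A_t(x,\mathbf{y}(t))=\langle\mathbf{p}(t),\mathbf{y}(t)-x\rangle-tH(\mathbf{y}(t),\mathbf{p}(t))+O(t^2),
\]
the $O(t^2)$ being uniform in $t\in(0,t_0]$ by Lemma~\ref{uni_Lip_1}. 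Coupled with the semiconcavity estimate $u(\mathbf{y}(t))-u(x)\leq\langle q,\mathbf{y}(t)-x\rangle+O(t^2)$ for arbitrary $q\in D^+u(x)$ and with $\mathbf{y}(t)-x=t\dot{\xi}_{t,x}(t)+O(t^2)$, the Fenchel inequality $\langle q,v\rangle-L(\mathbf{y}(t),v)\leq H(\mathbf{y}(t),q)$ yields $\phi^x_t(\mathbf{y}(t))-u(x)\leq tH(\mathbf{y}(t),q)+O(t^2)$, hence $\leq tH(x,p_x)+O(t^2)$ after minimizing over $q$ and absorbing $|H(\mathbf{y}(t),\cdot)-H(x,\cdot)|=O(t)$. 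The matching lower bound $\phi^x_t(\mathbf{y}(t))-u(x)\geq tH(\mathbf{y}(t),\mathbf{p}(t))-O(t^2)$ comes from applying the semiconcavity inequality at $\mathbf{y}(t)$ with $\mathbf{p}(t)\in D^+u(\mathbf{y}(t))$. Comparing forces $H(\mathbf{y}(t),\mathbf{p}(t))\to H(x,p_x)$; upper semicontinuity of $D^+u$ confines cluster points of $\mathbf{p}(t)$ to $D^+u(x)$, and the strict convexity of $H(x,\cdot)$ on the convex compact set $D^+u(x)$ singles out $p_x$ as the unique limit. Part (b) then follows from $\dot{\xi}_{t,x}(t)=H_p(\mathbf{y}(t),\mathbf{p}(t))\to H_p(x,p_x)$ combined with Lemma~\ref{uni_Lip_1}, which replaces $\dot{\xi}_{t,x}(s)$ by $\dot{\xi}_{t,x}(t)$ up to $O(t)$ uniformly in $s\in[0,t]$ inside $\mathbf{y}(t)-x=\int_0^t\dot{\xi}_{t,x}(s)\,ds$.

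Finally, part (d) I would obtain by sharpening the Fenchel step: the uniform convexity of $H$ in $p$ afforded by (L1) and (L3) (which jointly give $H_{pp}\geq\tilde\nu I$ on bounded sets) upgrades $\langle p_x,v\rangle-L(\mathbf{y}(t),v)\leq H(\mathbf{y}(t),p_x)$, evaluated at $v=\dot{\xi}_{t,x}(t)$, to $\leq H(\mathbf{y}(t),p_x)-c|\mathbf{p}(t)-p_x|^2$ through the bi-Lipschitz bijectivity of $p\mapsto H_p(\mathbf{y}(t),p)$. Substituting into the upper bound and comparing with the lower bound yields (d) after absorbing $|H(\mathbf{y}(t),p_x)-H(x,p_x)|=O(t)$ into the linear term $C_1 t$. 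The main obstacle I anticipate is keeping all the $O(t^2)$ remainders in the expansion of $A_t(x,\mathbf{y}(t))$ uniform in $t\in(0,t_0]$ and small enough to be safely absorbed by $C_1 t$ in (d) without spoiling the negative quadratic term $-C_2|\mathbf{p}(t)-p_x|^2$; this will require handling the equi-Lipschitz bound on $\{\dot{\xi}_{t,x}\}$ and the joint $C^{1,1}_{\mathrm{loc}}$ regularity of $A_t$ in tandem.
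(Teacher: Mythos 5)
Your proposal is correct in outline, but it follows a genuinely different route from the paper, and it is worth spelling out what each approach buys. For part (a), the paper works directly with the gradient monotonicity inequality for the strongly convex map $A_t(x,\cdot)$, then splits $p_t-p$ into a piece controlled by the semiconcavity of $u$ and a piece controlled by the $t$-Lipschitzness of $D_yA_t$. Your symmetrization trick---adding the two maximality inequalities for $\phi^x_t$ and $\phi^x_s$ so the $u$-terms cancel---is an equivalent but arguably cleaner formulation; note, however, that the Lipschitz constant of $D_tA_\tau(x,\cdot)$ on the cone scales like $1/\tau$, so the right bound on your left-hand side is of the form $(C/\min(t,s))|t-s|\,|\mathbf{y}(t)-\mathbf{y}(s)|$ rather than $C|t-s|\,|\mathbf{y}(t)-\mathbf{y}(s)|$; the $1/\min(t,s)$ is then absorbed by the fact that $\mu(t)+\mu(s)\geqslant (C_2-C_1t_0)/\min(t,s)$, so the argument survives, but the statement as written glosses over this cancellation. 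You also need $\mathbf{y}(t)$ to lie in the ball on which $\phi^x_s$ is known to be strictly concave, which requires $t/s$ not too large; a local chaining resolves this.

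For (b)--(d), the paper argues by extracting subsequential limits $v_0$ of $(\mathbf{y}(t)-x)/t$, using the semiconcavity proximal inequality at both $x$ and $\mathbf{y}(t_k)$ to produce the variational inequality $\langle p-p_x,v_0\rangle\geqslant0$ for all $p\in D^+u(x)$, identifying $p_x=L_v(x,v_0)$ as the minimizer of $H(x,\cdot)$ on $D^+u(x)$, and proving (d) by a full second-order Taylor expansion of $H$ in $(x,p)$ combined with (a)--(c). You instead derive the uniform second-order asymptotic
\begin{equation*}
A_t(x,\mathbf{y}(t))=\langle\mathbf{p}(t),\mathbf{y}(t)-x\rangle-tH(\mathbf{y}(t),\mathbf{p}(t))+O(t^2)
\end{equation*}
from energy conservation and the equi-Lipschitz bound of Lemma~\ref{uni_Lip_1}, and then sandwich $\phi^x_t(\mathbf{y}(t))-u(x)$ between $tH(\mathbf{y}(t),\mathbf{p}(t))-O(t^2)$ and $tH(x,p_x)+O(t^2)$; the quadratic gain in (d) is obtained by sharpening the Fenchel step via the uniform $p$-convexity of $H$, i.e.\ by noting that the Fenchel--Young inequality at $v^*=H_p(y,p')$ is off by $\tfrac c2|q-p'|^2$. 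This is a legitimate alternative to the paper's Taylor expansion and has the conceptual advantage of treating (c) and (d) as sharp and non-sharp versions of the same sup-convolution bound, at the cost of requiring the somewhat delicate expansion of $A_t(x,\mathbf{y}(t))$ with uniform $O(t^2)$ remainder (which is fine, but needs the equi-Lipschitz bounds tracked carefully, as you noted). Both proofs rely on the same cast of technical lemmas from the appendices---convexity of $A_t(x,\cdot)$, $C^{1,1}_{\mathrm{loc}}$ regularity, the Fermat identification $\mathbf{p}(t)=D_yA_t(x,\mathbf{y}(t))\in D^+u(\mathbf{y}(t))$---so the difference is in how the estimates are organized rather than in which facts are invoked.
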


\begin{proof}
Having fixed $x\in\R^n$, we shall abbreviate $\xi_{t,x}=\xi_{t}$. Let $0<t,s\leqslant t_0$ and let $\xi_t\in\Gamma^t_{x,\mathbf{y}(t)}$, $\xi_s\in\Gamma^s_{x,\mathbf{y}(s)}$ and $\eta\in\Gamma^t_{x,\mathbf{y}(s)}$ be minimizers for $A_t(x,\mathbf{y}(t))$, $A_s(x,\mathbf{y}(s))$, and $A_t(x,\mathbf{y}(s))$ respectively. Setting $p_t=L_v(\xi_t(t),\dot{\xi}_t(t))$, $p_s=L_v(\xi_s(s),\dot{\xi}_s(s))$, and $p=L_v(\eta(t),\dot{\eta}(t))$ we have that 
\begin{align*}
	\frac{C_2}t|\mathbf{y}(t)-\mathbf{y}(s)|^2
	\\\leqslant&\langle p_t-p,\mathbf{y}(t)-\mathbf{y}(s)\rangle=\langle p_t-p_s,\mathbf{y}(t)-\mathbf{y}(s)\rangle+\langle p_s-p,\mathbf{y}(t)-\mathbf{y}(s)\rangle\\
	\leqslant& C_1|\mathbf{y}(t)-\mathbf{y}(s)|^2+\langle p_s-p,\mathbf{y}(t)-\mathbf{y}(s)\rangle,
\end{align*}
where we have used the notation of the proof of Lemma~\ref{maximizer_singular}. By Proposition \ref{C11_A_t}, the function $(t,y)\mapsto A_t(x,y)$ is locally $C^{1,1}$ in the set $\{(t,y)\in\R\times\R^n: 0<t<t_0, |y-x|<\lambda t\}$. Moreover,  Proposition~\ref{semiconcave_A_t} together with Proposition~\ref{convexity_A_t} ensures that
$$
|p_s-p|\leqslant\frac{C_3}t|s-t|
$$
for some constant $C_3>0$. Therefore
$$
\left(\frac{C_2}t-C_1\right)|\mathbf{y}(t)-\mathbf{y}(s)|^2\leqslant\frac{C_3}t|s-t||\mathbf{y}(t)-\mathbf{y}(s)|.
$$
Recalling \eqref{eq:t_0} we have that $C_2/t-C_1>0$ for all $0<t\leqslant t_0$. Thus
$$
|\mathbf{y}(t)-\mathbf{y}(s)|\leqslant\frac{C_3}{C_2-C_1t_0}|t-s|,
$$
and this proves (a). 

Now we turn to the proof of (b). Since $\{\dot{\xi}_{t}(\cdot)\}_{t\in(0,t_0]}$ are equi-Lipschitz by Lemma \ref{uni_Lip_1}, for any sequence $t_k\to0^+$ such that $v_{k}:=(\xi_{t_k}(t_k)-x)/t_k$ converges, we obtain
\begin{equation}\label{eq:limit_velociy}
	\begin{split}
		\left|\frac{\xi_{t_k}(t_k)-x}{t_k}-\dot{\xi}_{t_k}(t_k)\right|
&\leqslant\frac 1{t_k}\int^{t_k}_{0}|\dot{\xi}_{t_k}(s)-\dot{\xi}_{t_k}(t_k)|\ ds\\
&\leqslant\frac C{t_k}\int^{t_k}_{0}(t_k-s)\ ds=\frac C2t_k.
	\end{split}
\end{equation}
This implies that
$$
v_{0}:=\lim_{k\to\infty}v_{k}=\lim_{k\to\infty}\dot{\xi}_{t_k}(t_k).
$$
By the semiconcavity of $u$, for any $p\in D^+u(x)$, we have
\begin{align*}
u(x)&\leqslant u(\mathbf{y}(t_k))+\langle L_v(\mathbf{y}(t_k),\dot{\xi}_{t_k}(t_k)), x-\mathbf{y}(t_k)\rangle+\frac C2|x-\mathbf{y}(t_k)|^2\\
    &\leqslant u(x)+\langle p,\mathbf{y}(t_k)-x\rangle+\langle L_v(\mathbf{y}(t_k),\dot{\xi}_{t_k}(t_k)), x-\mathbf{y}(t_k)\rangle+C|x-\mathbf{y}(t_k)|^2.
\end{align*}
Then, recalling that $\xi_{t_k}(t_k)=\mathbf{y}(t_k)$ we have
\begin{equation}\label{basic_inequalty_1}
\langle p-L_v(\mathbf{y}(t_k),\dot{\xi}_{t_k}(t_k)), v_{k}\rangle+t_kC|v_{k}|^2\geqslant 0,\quad \forall p\in D^+u(x).
\end{equation}
Taking the limit in \eqref{basic_inequalty_1} as $k\to\infty$ we obtain
\begin{equation}
\label{eq:key_propagation}
\langle p,v_{0}\rangle\geqslant\langle L_v(x,v_{0}),v_{0}\rangle=\langle p_x,v_{0}\rangle,\quad \forall p\in D^+u(x),
\end{equation}
where $p_x:=L_v(x,v_{0})\in D^+u(x)$ by the upper semicontinuity of  $x\rightsquigarrow D^+u(x)$. So,
\begin{equation}\label{optimization_problem_1}
H(x,p)\geqslant\langle L_v(x,v_{0}),v_{0}\rangle-L(x,v_{0})=H(x,p_x),\quad \forall p\in D^+u(x),
\end{equation}
and $p_x$ is the unique minimum point of $H(x,\cdot)$ on $D^+u(x)$. The uniqueness of $p_x$ implies the uniqueness of $v_{0}$ since $L_v(x,\cdot)$ is injective. This leads to the assertion that
$$
v_{0}=\lim_{t\to0^+}\frac{\xi_{t}(t)-x}{t}=\lim_{t\to0^+}\dot{\xi}_{t}(t)
$$
and, together with \eqref{optimization_problem_1}, implies \eqref{eq:limit_1}. This completes the proof of (b).

The conclusion (c) is a straight consequence of (a), (b) and the locally $C^{1,1}$ regularity property of the function $(t,y)\mapsto A_t(x,y)$.

Finally, we turn to prove (d). First, using Tailor's expansion, we have that
\begin{align*}
	&H(x,p_x)-H(\mathbf{y}(s),\mathbf{p}(s))\\
	=&H_x(\mathbf{y}(s),\mathbf{p}(s))(x-\mathbf{y}(s))+H_p(\mathbf{y}(s),\mathbf{p}(s))(p_x-\mathbf{p}(s))\\
	&+\frac 12\langle(H_{xp}(\mathbf{y}(s),\mathbf{p}(s))+H_{px}(\mathbf{y}(s),\mathbf{p}(s)))(p_x-\mathbf{p}(s)),(x-\mathbf{y}(s))\rangle\\
	&+\frac 12\langle H_{xx}(\mathbf{y}(s),\mathbf{p}(s))(x-\mathbf{y}(s)),(x-\mathbf{y}(s))\rangle\\
	&+\frac 12\langle H_{pp}(\mathbf{y}(s),\mathbf{p}(s))(p_x-\mathbf{p}(s)),(p_x-\mathbf{p}(s))\rangle+o(|\mathbf{y}(s)-x|^2+|\mathbf{p}(s)-p_x|^2).
\end{align*}
Thus, by (a), (b), (c) and our assumptions on $H$, there exist $\rho>0$ such that, for $s\in(0,\rho]$, we have
\begin{align*}
	&H(x,p_x)-H(\mathbf{y}(s),\mathbf{p}(s))\\
	\geqslant &-C_1s+\langle\dot{\xi}_s(s),p_x-\mathbf{p}(s)\rangle-C_{\varepsilon}s^2-\varepsilon|\mathbf{p}(s)-p_x|^2+C_2|\mathbf{p}(s)-p_x|^2
\end{align*}
Taking $\varepsilon>0$ small enough, we have
$$
H(x,p_x)-H(\mathbf{y}(s),\mathbf{p}(s))\geqslant-C_3s+\langle\dot{\xi}_s(s),p_x-\mathbf{p}(s)\rangle+C_4|\mathbf{p}(s)-p_x|^2.
$$
In view of \eqref{eq:limit_velociy}, we have
$$
H(x,p_x)-H(\mathbf{y}(s),\mathbf{p}(s))\geqslant-C_5s+\left\langle \frac{\mathbf{y}(s)-x}{s},p_x-\mathbf{p}(s)\right\rangle+C_4|\mathbf{p}(s)-p_x|^2.
$$
Therefore, by the semiconcavity of $u$, we obtain
$$
H(x,p_x)-H(\mathbf{y}(s),\mathbf{p}(s))\geqslant-C_6s+C_4|\mathbf{p}(s)-p_x|^2,
$$
which completes the proof of (d).
\end{proof}

\begin{Rem} Observe that \eqref{eq:key_propagation}, that is,
\begin{equation*} 
\langle p-p_x,v_0\rangle\geqslant0,\quad\forall p\in D^+u(x),
\end{equation*}
is exactly the key  condition for  propagation of singularities in \cite{Albano-Cannarsa} and \cite{Cannarsa-Yu}.
\end{Rem}

\begin{The}\label{sing_arc_gen_char}
Let $L$ be a Tonelli Lagrangian and let $H$ be the associated Hamiltonian. Suppose $u:\R^n\to\R$ is a Lipschitz continuous semiconcave viscosity solution of \eqref{eq:HJ-general} and $x\in\Singu$. Then the singular arc $\mathbf{y}:[0,t_0]\to\R^n$ defined in Lemma \ref{maximizer_singular} 
is a generalized characteristic  
and satisfies
\begin{equation}\label{eq:sing_gen_char}
\dot{\mathbf{y}}(\tau)\in\mbox{\rm co}\, H_p(\mathbf{y}(\tau),D^+u(\mathbf{y}(\tau))),\quad\text{a.e.}\ \tau\in[0,t_0].
\end{equation}
Moreover, 
\begin{equation}\label{eq:strong_gen_char_1}
\dot{\mathbf{y}}^+(0)=H_p(x,p_0),
\end{equation}
where $p_0$ is the unique element of minimal energy:
$$
H(x,p)\geqslant H(x,p_0),\quad \forall p\in D^+u(x).
$$
\end{The}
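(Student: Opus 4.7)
The second assertion $\dot{\mathbf{y}}^+(0) = H_p(x, p_0)$ follows directly from Proposition~\ref{Main_lemma_g_c}(b): the element $p_x$ produced there is, by construction, the unique minimizer of $H(x,\cdot)$ over $D^+u(x)$, so identifying $p_0 := p_x$ gives the claim without further work.

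For the inclusion \eqref{eq:sing_gen_char}, the plan is to verify it at every differentiability point $\tau \in (0,t_0)$ of $\mathbf{y}$, which is an a.e.-property since $\mathbf{y}$ is Lipschitz by Proposition~\ref{Main_lemma_g_c}(a). Fix such a $\tau$. I would first combine the maximality of $\mathbf{y}(\tau+h)$ in $T^+_{\tau+h}u(x)$ with that of $\mathbf{y}(\tau)$ in $T^+_\tau u(x)$ to obtain the sandwich
\begin{equation*}
A_{\tau+h}(x,\mathbf{y}(\tau+h)) - A_{\tau+h}(x,\mathbf{y}(\tau)) \leqslant u(\mathbf{y}(\tau+h)) - u(\mathbf{y}(\tau)) \leqslant A_{\tau}(x,\mathbf{y}(\tau+h)) - A_{\tau}(x,\mathbf{y}(\tau)),
\end{equation*}
and then expand both sides using the local $C^{1,1}$ regularity of $(t,y) \mapsto A_t(x,y)$ near $(\tau,\mathbf{y}(\tau))$, together with the Lipschitz bound $|\mathbf{y}(\tau+h)-\mathbf{y}(\tau)| = O(h)$. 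Since $\nabla_y A_\tau(x,\mathbf{y}(\tau)) = \mathbf{p}(\tau)$ and this gradient varies Lipschitz-continuously in $t$, both expansions reduce to $\langle \mathbf{p}(\tau), \mathbf{y}(\tau+h)-\mathbf{y}(\tau)\rangle + O(h^2)$, forcing
\begin{equation*}
u(\mathbf{y}(\tau+h)) - u(\mathbf{y}(\tau)) = \langle \mathbf{p}(\tau),\mathbf{y}(\tau+h)-\mathbf{y}(\tau)\rangle + O(h^2).
\end{equation*}
Juxtaposing this with the bound $u(\mathbf{y}(\tau+h)) - u(\mathbf{y}(\tau)) \leqslant \langle p, \mathbf{y}(\tau+h)-\mathbf{y}(\tau)\rangle + O(h^2)$ afforded by semiconcavity for any $p\in D^+u(\mathbf{y}(\tau))$, then dividing by $h>0$ and letting $h\downarrow 0$, produces the local form of \eqref{eq:key_propagation} at $\mathbf{y}(\tau)$:
\begin{equation*}
\langle p - \mathbf{p}(\tau),\dot{\mathbf{y}}(\tau)\rangle \geqslant 0 \qquad \forall\, p \in D^+u(\mathbf{y}(\tau)).
\end{equation*}

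To upgrade this selection inequality to the inclusion \eqref{eq:sing_gen_char}, I would apply Proposition~\ref{Main_lemma_g_c}(b) afresh at the singular point $\mathbf{y}(\tau)$ (singular by Lemma~\ref{maximizer_singular}), using the semigroup identity $T^+_{\tau+h} = T^+_\tau\circ T^+_h$. This factorization writes $\mathbf{y}(\tau+h)$ as the argmax of a sup-convolution based at the intermediate point $z(h) := \xi_{\tau+h,x}(\tau)$, with $z(h) \to \mathbf{y}(\tau)$ as $h\downarrow 0$. Combined with the equi-Lipschitz estimate of Lemma~\ref{uni_Lip_1} and the continuity of $\mathbf{p}$ from Proposition~\ref{Main_lemma_g_c}(c), this should yield $\dot{\mathbf{y}}^+(\tau) = H_p(\mathbf{y}(\tau),\mathbf{p}(\tau))$. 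Since $\mathbf{p}(\tau) \in D^+u(\mathbf{y}(\tau))$, the inclusion \eqref{eq:sing_gen_char} then follows in the strong form $\dot{\mathbf{y}}(\tau) \in H_p(\mathbf{y}(\tau), D^+u(\mathbf{y}(\tau)))$, a fortiori in the convex hull.

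The main obstacle I foresee is precisely this identification $\dot{\mathbf{y}}^+(\tau) = H_p(\mathbf{y}(\tau),\mathbf{p}(\tau))$ at $\tau>0$. The argument in Proposition~\ref{Main_lemma_g_c}(b) relies essentially on the shared initial condition $\xi_{t,x}(0) = x$ to convert $(\mathbf{y}(t)-x)/t$ into $\dot{\xi}_{t,x}(t)+O(t)$, a shortcut unavailable at $\tau>0$ where the entire trajectory $\xi_{t,x}$ depends nontrivially on the parameter $t$. The semigroup reduction above brings the problem back to a sup-convolution whose base point $z(h)$ converges to $\mathbf{y}(\tau)$, but the first-order drift $z(h)-\mathbf{y}(\tau) = O(h)$ must be carefully absorbed — essentially a Jacobi-field-type sensitivity estimate along $\xi_{\tau,x}$ — to show that it contributes only $o(h)$ to $(\mathbf{y}(\tau+h) - \mathbf{y}(\tau))/h$. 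If this direct pointwise identification proves too delicate, a weak-$\ast$ closure argument on approximating characteristics constructed via Proposition~\ref{criterion_on_gen_char} should recover the convex-hull inclusion in its intended form.
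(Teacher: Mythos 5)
Your identification of $\dot{\mathbf{y}}^+(0)=H_p(x,p_0)$ via Proposition~\ref{Main_lemma_g_c}(b) is exactly what the paper does, and your sandwich-plus-$C^{1,1}$ argument does yield the variational inequality $\langle p-\mathbf{p}(\tau),\dot{\mathbf{y}}(\tau)\rangle\geqslant 0$ for all $p\in D^+u(\mathbf{y}(\tau))$ at a.e.\ $\tau$. But that inequality is not the inclusion \eqref{eq:sing_gen_char}: it places $\dot{\mathbf{y}}(\tau)$ in the normal cone to the convex set $D^+u(\mathbf{y}(\tau))$ at $\mathbf{p}(\tau)$, which is a cone, whereas \eqref{eq:sing_gen_char} constrains $\dot{\mathbf{y}}(\tau)$ to lie in the compact convex set $\mathrm{co}\,H_p(\mathbf{y}(\tau),D^+u(\mathbf{y}(\tau)))$; neither statement implies the other. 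You see this yourself and propose to close the gap by proving $\dot{\mathbf{y}}^+(\tau)=H_p(\mathbf{y}(\tau),\mathbf{p}(\tau))$ via a dynamic-programming factorization, but you then explicitly flag that the moving intermediate point $z(h)$ carries an $O(h)$ drift you cannot absorb. That is exactly where the proof breaks off: the proposed pointwise identification of $\dot{\mathbf{y}}^+(\tau)$ for $\tau>0$ is left unestablished, and with it the inclusion.

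The paper's actual route (Appendix~\ref{App.B}) is genuinely different and avoids any pointwise identification of $\dot{\mathbf{y}}(\tau)$ at interior times. Following Lemma~5.5.6 of \cite{Cannarsa-Sinestrari}, it builds piecewise arcs $\mathbf{x}_\varepsilon,\mathbf{p}_\varepsilon$ with $\mathbf{p}_\varepsilon(s)\in D^+u(\mathbf{x}_\varepsilon(s))$ and proves the uniform approximate-ODE estimate
\begin{equation*}
\Big|\mathbf{x}_\varepsilon(r)-\mathbf{x}_\varepsilon(s)-\int_s^r H_p(\mathbf{x}_\varepsilon(\tau),\mathbf{p}_\varepsilon(\tau))\,d\tau\Big|\leqslant C\sqrt{\varepsilon},
\end{equation*}
using the equi-Lipschitz bound of Lemma~\ref{uni_Lip_1}, the Lipschitz continuity from Proposition~\ref{Main_lemma_g_c}(a)--(c), and---crucially---the quantitative estimate \eqref{eq:Holder_esitmate_p} from Proposition~\ref{Main_lemma_g_c}(d), which your sketch never invokes; it is precisely the ingredient that controls the jump of $\mathbf{p}_\varepsilon$ across partition nodes. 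Sending $\varepsilon\to0$ and using the upper semicontinuity of $z\rightsquigarrow D^+u(z)$ then gives \eqref{eq:sing_gen_char}. Your fallback of ``a weak-$\ast$ closure argument on approximating characteristics constructed via Proposition~\ref{criterion_on_gen_char}'' would only produce \emph{some} singular generalized characteristic from $x$, not the statement that the specific arc $\mathbf{y}$ of Lemma~\ref{maximizer_singular} is one; making that fallback prove the theorem as stated would require reconstructing essentially the Appendix~\ref{App.B} machinery.
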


\begin{proof}
The conclusion can be derived directly from Lemma \ref{maximizer_singular} and Proposition \ref{Main_lemma_g_c}  except for \eqref{eq:sing_gen_char}.  For the proof of \eqref{eq:sing_gen_char}, see Appendix \ref{App.B}.
\end{proof}

To study the genuine propagation of singularities along generalized characteristics, we have to check that the singular arc $\mathbf{y}(t)$ in Lemma \ref{maximizer_singular} does not keep constant locally. As we show below,  the following condition can be useful for this purpose:
\begin{equation}\label{eq:alternative}
	D_yA_t(x,x)\not\in D^+u(x),\quad\ \text{for all}\ t\in(0,t_0].
\end{equation}

\begin{Pro}\label{keep_constant}
Let $\mathbf{y}:[0,t_0]\to\R^n$ be the singular generalized characteristic in Theorem \ref{sing_arc_gen_char}, and let $t\in(0,t_0]$. Then $\mathbf{y}(t)=x$ if and only if $D_yA_t(x,x)\in D^+u(x)$. Consequently, if \eqref{eq:alternative} holds, then $\mathbf{y}(t)\neq x$ for every $t\in(0,t_0]$.
\end{Pro}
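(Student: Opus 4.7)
The plan is to reduce the assertion to a first-order optimality criterion for the strictly concave maximization problem that defines $\mathbf{y}(t)$. First I would recall from Lemma~\ref{maximizer_singular} together with Lemma~\ref{sup_max} that $\mathbf{y}(t)=y_{t,x}$ is the unique global maximizer of $\phi^x_t(y) := u(y) - A_t(x,y)$ on $\R^n$, and that every such maximizer is forced to lie in the open ball $B(x,\lambda t)$ (with $\lambda=1+\lambda_0$), on which $\phi^x_t$ is strictly concave. This is exactly how $t_0$ was chosen in Lemma~\ref{maximizer_singular}, so that the semiconcavity constant of $u$ is dominated by the uniform convexity constant $C_2/t$ of $A_t(x,\cdot)$ from Proposition~\ref{convexity_A_t}; consequently $\phi^x_t$ is semiconcave on $B(x,\lambda t)$ with a strictly negative constant $C<0$.

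Next I would observe that, by Proposition~\ref{C11_A_t}, the map $y\mapsto A_t(x,y)$ is of class $C^{1,1}_{\mathrm{loc}}$ on $S_\lambda(x,t_\lambda)$; in particular it is differentiable at $y=x$ with gradient $D_yA_t(x,x)$. The sum rule for superdifferentials (valid when one summand is differentiable) then gives
$$
D^+\phi^x_t(x) = D^+u(x) - D_yA_t(x,x),
$$
so that $D_yA_t(x,x)\in D^+u(x)$ is equivalent to $0\in D^+\phi^x_t(x)$.

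It remains to establish the equivalence $\mathbf{y}(t)=x \iff 0\in D^+\phi^x_t(x)$. The forward direction is immediate, since at any global maximum the zero vector is automatically a supergradient. For the reverse direction, I would invoke Proposition~\ref{criterion-Du_semiconcave2} (whose proof is insensitive to the sign of the semiconcavity constant): applied to $\phi^x_t$ on $B(x,\lambda t)$ with the negative constant $C$ and with supergradient $p=0$, it yields
$$
\phi^x_t(y) \leqslant \phi^x_t(x) + \frac{C}{2}|y-x|^2 < \phi^x_t(x) \qquad (y\in B(x,\lambda t),\ y\ne x),
$$
and since by Lemma~\ref{sup_max} the unique global maximizer $\mathbf{y}(t)$ must sit inside this ball, it must coincide with $x$. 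Chaining the two equivalences gives $\mathbf{y}(t)=x \iff D_yA_t(x,x)\in D^+u(x)$, and the ``consequently'' clause is just the contrapositive of this equivalence.

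The main — and rather mild — technical point is verifying the superdifferential sum rule at $x$ and the applicability of Proposition~\ref{criterion-Du_semiconcave2} with a negative semiconcavity constant; both are standard once one notes that semiconcavity with constant $C$ is precisely the statement that $\phi^x_t - \frac{C}{2}|\cdot|^2$ is concave, a property only strengthened when $C<0$. Everything else in the argument is a direct bookkeeping exercise that draws exclusively on the calculus of superdifferentials for semiconcave functions, the strict concavity produced by Proposition~\ref{convexity_A_t}, and the a priori location of maximizers of $\phi^x_t$ given by Lemma~\ref{sup_max}.
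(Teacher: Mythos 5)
Your proof is correct and rests on exactly the same ingredients as the paper's: the strict concavity of $\phi^x_t$ on $B(x,\lambda t)$ (coming from the uniform convexity of $A_t(x,\cdot)$ dominating the semiconcavity of $u$, which is how $t_0$ was chosen), the a priori localization of the maximizer from Lemma~\ref{sup_max}, and first-order optimality at the maximum. The only stylistic difference is that you route the argument through the superdifferential sum rule $D^+\phi^x_t(x)=D^+u(x)-D_yA_t(x,x)$ and the strengthened form of Proposition~\ref{criterion-Du_semiconcave2} with a negative constant, whereas the paper writes the two quadratic estimates for $u$ and $A_t(x,\cdot)$ out by hand, obtains the explicit bound $|\mathbf{y}(t)-x|\leqslant 2|p-p'|/(C_2/t-C_1)$, and uses the Fermat rule only for the ``only if'' direction; these are the same computation in two different packagings, and your remark that the converse implication in Proposition~\ref{criterion-Du_semiconcave2} is indifferent to the sign of $C$ is the right observation to make the reduction legitimate.
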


\begin{proof}
Let $p\in D^+u(x)$, $p'=D_yA_t(x,x)$, and let $t\in(0,t_0]$. Recalling that $\mathbf{y}(t)$ is the unique maximizer of $\phi^x_t$ we have 
\begin{equation*}
\begin{split}
0\leqslant\phi^x_t(\mathbf{y}(t))-\phi^x_t(x)=&[u(\mathbf{y}(t))-u(x)-\langle p,\mathbf{y}(t)-x\rangle-\frac {C_1}2|\mathbf{y}(t)-x|^2]\\
&-[A_t(x,\mathbf{y}(t))-A_t(x,x)-\langle p',\mathbf{y}(t)-x\rangle-\frac {C_2}{2t}|\mathbf{y}(t)-x|^2]\\
&+\langle p-p',\mathbf{y}(t)-x\rangle+\frac 12\Big(C_1-\frac{C_2}t\Big)|\mathbf{y}(t)-x|^2,\\
\leqslant&\langle p-p',\mathbf{y}(t)-x\rangle+\frac 12\Big(C_1-\frac{C_2}t\Big)|\mathbf{y}(t)-x|^2,
\end{split}
\end{equation*}
where---like in the proof of  Lemma~\ref{maximizer_singular}---$C_1>0$ is a semiconcavity constant for $u$ on $\R^n$ and $C_2>0$  a convexity constant  for $A_t(x,\cdot)$ on
$B(x,(1+\lambda_0) t)$. So, 
\begin{equation*}
0\leqslant|\mathbf{y}(t)-x|\leqslant\frac{2|p-p'|}{C_2/t-C_1}.
\end{equation*}
If $D_yA_t(x,x)\in D^+u(x)$, then taking $p=p'$ in the above inequality  yields $\mathbf{y}(t)=x$. Conversely, if $\mathbf{y}(t)=x$, then the nonsmooth Fermat rule yields $0\in D^+u(x)-D_yA_t(x,x)$ which completes the proof.
\end{proof}

Another condition that ensures the genuine propagation of singularities is related to the notion of critical point.
\begin{defn}\label{defn:critical_point}
We say that $x\in\R^n$  is a {\em critical point} of  a viscosity solution  $u$ of \eqref{eq:HJ-general} if
$
0\in \text{co}\,H_p(x,D^+u(x)),
$
and a {\em strong critical point} of $u$  if
$
0\in H_p(x,D^+u(x)).
$
\end{defn}

\begin{Rem}
For a  mechanical Lagrangian of the form 
\begin{equation}
\label{eq:mechanical_systems}
L(x,v)=\frac 12\langle A(x)v,v\rangle-V(x),
\end{equation}
with $\langle A(x)\cdot,\cdot\rangle$ the matrix associated with a Riemannian metric in $\R^n$ and $V$ a smooth potential, $x$ is a critical point of a semiconcave solution $u$ of the corresponding Hamilton-Jacobi equation 
\begin{equation*}
\frac 12\langle A(x)^{-1}Du,Du\rangle+V(x)=0
\end{equation*}
if and only if $0\in D^+u(x)$, i.e., $x$ is a critical point of $u$ in the sense of nonsmooth analysis.
\end{Rem}

It is already known the condition that $x$ is not a critical point is a key point to guarantee the genuine propagation of singularities along  generalized characteristics (see, for instance, \cite{Albano-Cannarsa}). 

\begin{Cor}\label{critical_pts_u_H_1}
Let $\mathbf{y}:[0,t_0]\to\R^n$ be the singular generalized characteristic in Theorem \ref{sing_arc_gen_char}. If $x$ is not a strong critical point of $u$ then there exists $t\in(0,t_0]$ such that $\mathbf{y}(s)\not=x$ for all $s\in(0,t]$ .
\end{Cor}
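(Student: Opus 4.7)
The plan is to combine the explicit formula for the right derivative $\dot{\mathbf{y}}^+(0)$ from Theorem \ref{sing_arc_gen_char} with the hypothesis that $x$ is not a strong critical point of $u$. Since Theorem \ref{sing_arc_gen_char} identifies
\[
\dot{\mathbf{y}}^+(0) = H_p(x,p_0),
\]
where $p_0$ is the unique minimum of $H(x,\cdot)$ over $D^+u(x)$, the non-vanishing of this initial velocity immediately separates $\mathbf{y}(s)$ from $x$ for small $s>0$. So the whole argument reduces to showing $H_p(x,p_0)\neq 0$.

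First, I would unpack Definition \ref{defn:critical_point}: the hypothesis that $x$ is not a strong critical point of $u$ means exactly that $0\notin H_p(x,D^+u(x))$, i.e.\ $H_p(x,p)\neq 0$ for every $p\in D^+u(x)$. Since $p_0\in D^+u(x)$ by construction, this applies to $p_0$ as well, so $\dot{\mathbf{y}}^+(0)=H_p(x,p_0)\neq 0$.

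Then I would invoke the existence of the right derivative at $0$, which is part (b) of Proposition \ref{Main_lemma_g_c}. By definition,
\[
\lim_{s\to 0^+}\frac{\mathbf{y}(s)-x}{s}=\dot{\mathbf{y}}^+(0),
\]
so there exists $t\in(0,t_0]$ such that
\[
\Bigl|\frac{\mathbf{y}(s)-x}{s}\Bigr|\geqslant \tfrac{1}{2}|\dot{\mathbf{y}}^+(0)|>0 \qquad \text{for all }s\in(0,t].
\]
In particular $\mathbf{y}(s)\neq x$ on $(0,t]$, which is the stated conclusion.

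There is really no obstacle here: the two ingredients (the formula for $\dot{\mathbf{y}}^+(0)$ and the definition of strong critical point) fit together so that the corollary is essentially a rephrasing. The only mild subtlety worth flagging in the writeup is that the argument uses the \emph{strong} critical point condition rather than the weaker $0\notin\mathrm{co}\,H_p(x,D^+u(x))$ used in \cite{Albano-Cannarsa}; here the stronger hypothesis is natural because we only need to rule out vanishing at the single point $p_0$, not on the whole convex hull, in order to force $\mathbf{y}$ to move away from $x$ along the specific singular arc produced by the sup-convolution.
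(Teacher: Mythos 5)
Your main argument is correct and, in fact, cleaner than the paper's. The paper proves the corollary by contraposition: it supposes a sequence $t_k\to 0^+$ exists with $\mathbf{y}(t_k)=x$, then re-runs the limit computation from Proposition \ref{Main_lemma_g_c} to get $\dot{\xi}_k(t_k)\to 0$, deduces $H_p(x,p_k(t_k))\to 0$ with $p_k(t_k)\in D^+u(x)$, and concludes $0\in H_p(x,D^+u(x))$ by compactness of $D^+u(x)$ (stated in terms of upper semicontinuity of the set-valued map). Your version skips this re-derivation and simply reads off $\dot{\mathbf{y}}^+(0)=H_p(x,p_0)\neq 0$ directly from \eqref{eq:strong_gen_char_1} and the non-strong-critical hypothesis, then uses the existence of the right derivative to separate $\mathbf{y}(s)$ from $x$ for small $s$. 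Both proofs are correct and rely on exactly the same underlying machinery (Proposition \ref{Main_lemma_g_c}(b)); yours is the more economical packaging because it invokes the already-established formula rather than reproducing the argument in a special case.

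One thing in your closing remark is reversed, though. Since $H_p(x,D^+u(x))\subset\mathrm{co}\,H_p(x,D^+u(x))$, the condition $0\notin\mathrm{co}\,H_p(x,D^+u(x))$ (the Albano--Cannarsa condition, ``$x$ not a critical point'') is a \emph{stronger} hypothesis than $0\notin H_p(x,D^+u(x))$ (``$x$ not a strong critical point''), not a weaker one. The corollary is therefore formulated with the \emph{weaker} hypothesis, which makes it a \emph{stronger} statement: it covers every $x$ that the Albano--Cannarsa condition covers, and possibly more. This doesn't affect the validity of your proof, but the comparison in your last paragraph should be flipped.
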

  
\begin{proof} It suffices to show that $0\in H_p(x,D^+u(x))$ whenever a sequence $t_k\to 0$ exists such that $\mathbf{y}(t_k)=x$ for all $k\in\N$. Indeed, denoting by $\xi_k\in\Gamma^{t_k}_{x,\mathbf{y}(t_k)}$  the unique minimizer of $A_{t_k}(x,\mathbf{y}(t_k))$,
as in the proof of Proposition~\ref{Main_lemma_g_c} 
we have that 
$$
\lim_{k\to\infty}\dot{\xi}_{k}(t_k)=\lim_{k\to\infty}\frac{\xi_{k}(t_k)-x}{t_k}=0
$$
because $\xi_{k}(t_k)=\mathbf{y}(t_k)$. Therefore the dual arc  $p_k(s)=L_v(\xi_k(s),\dot{\xi}_k(s))$ satisfies 
$$
\lim_{k\to\infty}H_p(\xi_{k}(t_k),p_{k}(t_k))=\lim_{k\to\infty}\dot{\xi}_{k}(t_k)=0.
$$
Now, since $p_{k}(t_k)=D_yA_{t_k}(x,\mathbf{y}(t_k))\in D^+u(\mathbf{y}(t_k))$ by the nonsmooth Fermat rule,  the upper semicontinuity of $z\rightsquigarrow H_p(x,D^+u(z))$  yields $0\in H_p(x,D^+u(x))$.
\end{proof}

The above results on the propagation of singularities along generalized characteristics leads to the following global propagation property.

\begin{The}\label{global_propagation}
	Let $L$ be a Tonelli Lagrangian and let $H$ be the associated Hamiltonian. Suppose $u:\R^n\to\R$ is a Lipschitz continuous semiconcave viscosity solution of \eqref{eq:HJ-general}.
 If $x\in\Singu$, then there exists a generalized characteristic $\mathbf{x}:[0,+\infty)\to\R^n$ such that $\mathbf{x}(0)=x$ and $\mathbf{x}(s)\in\Singu$ for all $s\in [0,+\infty)$.
\end{The}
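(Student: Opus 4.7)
The plan is to iterate the local construction of Lemma \ref{maximizer_singular} and Theorem \ref{sing_arc_gen_char} indefinitely, exploiting the crucial fact that the time $t_0\in(0,1]$ on which the singular generalized characteristic is defined depends only on the global semiconcavity constant of $u$ and on ${\rm Lip}(u)$, but not on the starting point. Set $x_0:=x\in\Singu$ and, assuming inductively that $x_k\in\Singu$, apply Lemma \ref{maximizer_singular} with initial point $x_k$ to obtain a Lipschitz arc $\mathbf{y}_k:[0,t_0]\to\R^n$ with $\mathbf{y}_k(0)=x_k$ and $\mathbf{y}_k(s)\in\Singu$ for every $s\in[0,t_0]$, and Theorem \ref{sing_arc_gen_char} to ensure that $\mathbf{y}_k$ is a generalized characteristic on $[0,t_0]$. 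Define $x_{k+1}:=\mathbf{y}_k(t_0)\in\Singu$ and iterate, then form the concatenation
\[
\mathbf{x}(s):=\mathbf{y}_k(s-kt_0),\qquad s\in[kt_0,(k+1)t_0],\quad k\in\N.
\]

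It remains to verify that $\mathbf{x}:[0,+\infty)\to\R^n$ is a generalized characteristic in the sense of \eqref{generalized_characteristics}. By construction $\mathbf{x}(s)\in\Singu$ for every $s\geqslant 0$, so the second conclusion is automatic. For Lipschitz continuity, Proposition \ref{Main_lemma_g_c}(a) provides a bound of the form $|\mathbf{y}_k(t)-\mathbf{y}_k(s)|\leqslant \frac{C_3}{C_2-C_1t_0}|t-s|$ on each piece, with constants $C_1$ (a semiconcavity constant of $u$), $C_2$ (a convexity constant of $A_t(x_k,\cdot)$ on $B(x_k,(1+\lambda_0)t)$ from Proposition \ref{convexity_A_t}) and $C_3$ (from the $C^{1,1}$ estimate of Proposition \ref{C11_A_t}) all independent of $k$; since the pieces glue continuously at the junctions $s=kt_0$, $\mathbf{x}$ is Lipschitz on $[0,+\infty)$ with a uniform constant. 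Finally, on each open interval $(kt_0,(k+1)t_0)$ the arc $\mathbf{x}$ is (up to translation) a generalized characteristic by Theorem \ref{sing_arc_gen_char}, so the differential inclusion \eqref{generalized_characteristics} is satisfied a.e.\ there; as the set of junction times $\{kt_0:k\in\N\}$ is countable and hence of Lebesgue measure zero, the inclusion holds a.e.\ on $[0,+\infty)$.

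The main and essentially only subtle point is the verification that all the constants entering the local construction are genuinely uniform in the starting point. This uniformity comes from: $u$ being globally Lipschitz and globally semiconcave on $\R^n$ (Proposition \ref{Ext_and_reachable}), which makes $\lambda_0$ (Lemma \ref{sup_max}) and $C_1$ independent of $x$; and Proposition \ref{convexity_A_t}, which gives a convexity constant $C_2$ for $A_t(x,\cdot)$ on $B(x,(1+\lambda_0)t)$ uniform with respect to $x\in\R^n$. Together with the $C^{1,1}$ regularity of $(t,y)\mapsto A_t(x,y)$, this allows the same time step $t_0$ to be used at every stage of the iteration, and therefore the construction can be continued indefinitely to produce the desired global singular generalized characteristic.
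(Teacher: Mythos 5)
Your proposal is correct and follows exactly the strategy the paper has in mind: the paper leaves the proof of Theorem~\ref{global_propagation} implicit, since it is an immediate iteration of Lemma~\ref{maximizer_singular} and Theorem~\ref{sing_arc_gen_char}, but the crucial observation that makes the iteration work---that $t_0$ depends only on a global semiconcavity constant of $u$, on ${\rm Lip}(u)$, and on the uniform-in-$x$ convexity estimate for $A_t(x,\cdot)$ from Proposition~\ref{convexity_A_t}---is precisely the point the paper emphasizes in the discussion preceding Lemma~\ref{maximizer_singular}. Your handling of the gluing (uniform Lipschitz constant across pieces via Proposition~\ref{Main_lemma_g_c}(a), continuity at the nodes, and the countable junction set being Lebesgue-null so the inclusion~\eqref{generalized_characteristics} holds a.e.) fills in exactly the routine details the authors chose to omit.
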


For geodesic systems, global propagation results were obtained in \cite{Albano2014_1}, \cite{Albano2014_2}, and \cite{acns} even on Riemannian manifolds. Theorem~\ref{global_propagation} above applies to mechanical systems with a Lagrangian $L$ of the form \eqref{eq:mechanical_systems}.

\begin{Cor}\label{global_propagation_2}
Let $L$ be the Tonelli Lagrangian in \eqref{eq:mechanical_systems} and let  $H$ be the associated Hamiltonian. Suppose that $A$ and $V$ are  bounded together with and all their derivatives up to the second order and let $u$ be a viscosity solution of the Hamilton-Jacobi equation \eqref{eq:HJ-general}.   If $x\in \Singu$, then there exists a unique generalized characteristic $\mathbf{x}:[0,+\infty)\to\R^n$ such that $\mathbf{x}(0)=x$ and $\mathbf{x}(s)\in\Singu$ for all $s\in [0,+\infty)$.
\end{Cor}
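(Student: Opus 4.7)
The existence of a generalized characteristic $\mathbf{x}:[0,+\infty)\to\R^n$ with $\mathbf{x}(0)=x$ and $\mathbf{x}(s)\in\Singu$ for every $s\geqslant 0$ is immediate from Theorem~\ref{global_propagation}, since the assumptions on $A$ and $V$ force $L$ in \eqref{eq:mechanical_systems} to be a Tonelli Lagrangian satisfying \textrm{(L1)-(L3)}. The genuinely new content is the \emph{uniqueness} assertion, which I plan to derive from a Gronwall-type comparison tailored to the mechanical structure.

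The decisive simplification is that for a mechanical Hamiltonian $H_p(y,p)=A(y)^{-1}p$ is \emph{linear} in $p$; since $D^+u(y)$ is convex for every $y$, the inclusion \eqref{generalized_characteristics} collapses to
\[
\dot{\mathbf{x}}(s)=A(\mathbf{x}(s))^{-1}\mathbf{p}(s),\qquad\mathbf{p}(s)\in D^+u(\mathbf{x}(s))\quad\text{a.e.,}
\]
for a measurable selection $\mathbf{p}(\cdot)$ (Filippov). Given two such arcs $\mathbf{x}_1,\mathbf{x}_2$ with $\mathbf{x}_1(0)=\mathbf{x}_2(0)=x$ and corresponding selections $\mathbf{p}_1,\mathbf{p}_2$, I would introduce $\delta(s)=|\mathbf{x}_1(s)-\mathbf{x}_2(s)|^2$, differentiate, and use symmetry of $A^{-1}$ to split
\[
\tfrac12\dot\delta(s)=\langle A(\mathbf{x}_1)^{-1}(\mathbf{x}_1-\mathbf{x}_2),\mathbf{p}_1-\mathbf{p}_2\rangle+\langle\mathbf{x}_1-\mathbf{x}_2,[A(\mathbf{x}_1)^{-1}-A(\mathbf{x}_2)^{-1}]\mathbf{p}_2\rangle.
\]
The second summand is bounded by $C\delta(s)$ from the global Lipschitz continuity of $A^{-1}$ (a consequence of boundedness of $A,DA$ and of the uniform ellipticity built into \textrm{(L1)}) together with the global bound $|\mathbf{p}_2|\leqslant\mathrm{Lip}(u)$. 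The first summand should be controlled by the semiconcavity of $u$, which yields $\langle\mathbf{p}_1-\mathbf{p}_2,\mathbf{x}_1-\mathbf{x}_2\rangle\leqslant C_u\delta(s)$. The resulting differential inequality $\dot\delta\leqslant K\delta$, combined with $\delta(0)=0$, forces $\delta\equiv 0$ by Gronwall, hence $\mathbf{x}_1\equiv\mathbf{x}_2$.

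The main obstacle is that the semiconcavity of $u$ delivers a monotonicity estimate on the \emph{Euclidean} pairing $\langle\mathbf{p}_1-\mathbf{p}_2,\mathbf{x}_1-\mathbf{x}_2\rangle$, whereas the first summand above involves the \emph{twisted} pairing $\langle A(\mathbf{x}_1)^{-1}(\mathbf{x}_1-\mathbf{x}_2),\mathbf{p}_1-\mathbf{p}_2\rangle$. To bridge this gap I plan to regard $A$ as a Riemannian metric on $\R^n$ and replace the Euclidean squared distance by the associated squared Riemannian distance $d_A^2(\mathbf{x}_1,\mathbf{x}_2)$: Euclidean semiconcavity of $u$ transfers to geodesic semiconcavity on $(\R^n,A)$ under the $C^2$-control of $A$, and the classical EVI-type monotonicity for the gradient flow of a geodesically semiconcave function then gives $\frac{d}{ds}d_A^2(\mathbf{x}_1(s),\mathbf{x}_2(s))\leqslant K\, d_A^2(\mathbf{x}_1(s),\mathbf{x}_2(s))$ directly. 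Equivalence of $d_A$ with the Euclidean distance, coming from uniform ellipticity and boundedness of $A$, then converts the conclusion back to uniqueness in $\R^n$.
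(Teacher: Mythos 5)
Your existence argument matches the paper exactly: conditions (L1)--(L3) are verified for the mechanical Lagrangian under the stated boundedness hypotheses, and Theorem~\ref{global_propagation} is invoked. For uniqueness the paper itself gives no argument, only a citation to \cite{Cannarsa-Sinestrari}, so you are reconstructing a proof the paper leaves implicit. Your key structural observation---that $H_p(y,p)=A(y)^{-1}p$ is affine in $p$, so the convex hull in \eqref{generalized_characteristics} collapses and the inclusion becomes $\dot{\mathbf{x}}(s)=A(\mathbf{x}(s))^{-1}\mathbf{p}(s)$ for a measurable selection $\mathbf{p}(s)\in D^+u(\mathbf{x}(s))$---is correct, and so is your diagnosis that the naive Euclidean Gronwall estimate stalls: semiconcavity controls $\langle\mathbf{p}_1-\mathbf{p}_2,\mathbf{x}_1-\mathbf{x}_2\rangle$, not the twisted pairing $\langle A(\mathbf{x}_1)^{-1}(\mathbf{x}_1-\mathbf{x}_2),\mathbf{p}_1-\mathbf{p}_2\rangle$, and the remainder $\langle[A(\mathbf{x}_1)^{-1}-I](\mathbf{x}_1-\mathbf{x}_2),\mathbf{p}_1-\mathbf{p}_2\rangle$ is only $O(|\mathbf{x}_1-\mathbf{x}_2|)$ because $|\mathbf{p}_1-\mathbf{p}_2|$ need not be small.

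The Riemannian-distance/EVI detour you propose to close this gap is, however, heavier and riskier than necessary. Transferring Euclidean semiconcavity to geodesic semiconcavity along $A$-geodesics needs two-sided curvature control that you would have to extract from the $C^2$-bounds on $A$, and the EVI monotonicity you want is stated for (sub)gradient flows of $\lambda$-convex functions, whereas here $\mathbf{p}(\cdot)$ is an arbitrary measurable selection of the (multivalued) superdifferential, so the inequality is not off-the-shelf. A much lighter fix is to monitor neither the Euclidean nor the geodesic distance but the quadratic form
\[
\phi(s)=\big\langle A\big(\mathbf{x}_1(s)\big)\big(\mathbf{x}_1(s)-\mathbf{x}_2(s)\big),\,\mathbf{x}_1(s)-\mathbf{x}_2(s)\big\rangle .
\]
Using the symmetry of $A$,
\[
\dot\phi(s)=2\big\langle A(\mathbf{x}_1)(\mathbf{x}_1-\mathbf{x}_2),\dot{\mathbf{x}}_1-\dot{\mathbf{x}}_2\big\rangle
+\big\langle\big[DA(\mathbf{x}_1)\,\dot{\mathbf{x}}_1\big](\mathbf{x}_1-\mathbf{x}_2),\mathbf{x}_1-\mathbf{x}_2\big\rangle ,
\]
and the crucial cancellation $A(\mathbf{x}_1)A(\mathbf{x}_1)^{-1}=I$ brings back the Euclidean pairing:
\[
2\big\langle A(\mathbf{x}_1)(\mathbf{x}_1-\mathbf{x}_2),\dot{\mathbf{x}}_1-\dot{\mathbf{x}}_2\big\rangle
=2\langle \mathbf{x}_1-\mathbf{x}_2,\mathbf{p}_1-\mathbf{p}_2\rangle
-2\big\langle \mathbf{x}_1-\mathbf{x}_2,\,[A(\mathbf{x}_1)-A(\mathbf{x}_2)]A(\mathbf{x}_2)^{-1}\mathbf{p}_2\big\rangle .
\]
Euclidean semiconcavity bounds the first summand by $C|\mathbf{x}_1-\mathbf{x}_2|^2$; the second summand and the $DA$-term are each $O(|\mathbf{x}_1-\mathbf{x}_2|^2)$ using the global Lipschitz bound on $A$, the uniform bound on $A^{-1}$, and $|\mathbf{p}_i|\leqslant\mathrm{Lip}(u)$, $|\dot{\mathbf{x}}_1|\leqslant\|A^{-1}\|_\infty\,\mathrm{Lip}(u)$. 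Since uniform ellipticity makes $\phi$ equivalent to $|\mathbf{x}_1-\mathbf{x}_2|^2$, Gronwall with $\phi(0)=0$ gives $\phi\equiv 0$, hence $\mathbf{x}_1\equiv\mathbf{x}_2$. This closes your argument with the Euclidean semiconcavity estimate you already have, avoiding the transfer-to-geodesic-semiconcavity issue entirely, and is the standard mechanism behind the uniqueness cited in the paper.
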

\begin{proof}
Observing that all the conditions {\rm (L1)-(L3)} are satisfied, the main part of the conclusion is an immediate consequence of Theorem~\ref{global_propagation}. The uniqueness of the generalized characteristic is a well-known consequence of the semiconcavity of $u$ (see, e.g., \cite{Cannarsa-Sinestrari}). 
\end{proof}

\subsection{Torus case}\label{se:torus}
In this section, we adapt our results to the flat $n$-torus $\T^n$. Given a Lagrangian $L$  on $\T^n\times\R^n$, of class $\mathcal C^2$, we lift $L$ to the universal covering space $\R^n\times\R^n$ and denote the lifted Lagrangian by $L$ as well. Then $L(x,v)$ is $\T^n$-periodic in $x$. For our regularity results, we suppose that $L$ satisfies conditions (L1) and (L2).

Let $Q:=(0,1]^n$ be a fundamental domain of the flat $n$-torus $\T^n$ lifted to the universal covering space $\R^n$. For each $x,x'\in\R^n$, we say that $x\sim x'$ if $x-x'\in\Z^n$ and we denote by $[x]$ the equivalence class of $x$. For any $x,y\in\R^n$ and $t>0$, we denote by $A_t([x],[y])$ the fundamental solution for $L$ on the torus, which is defined  as follows:
\begin{equation}\label{eq:fund_sol_torus}
	A_t([x],[y])=\inf_{x\in[x],y\in[y]}A_t(x,y),\qquad \forall [x],[y]\in\T^n.
\end{equation}
Similarly, we denote by $H$ be the associated $\T^n$-periodic Hamiltonian. Let $u$ be a $\T^n$-periodic viscosity solution of the Hamilton-Jacobi equation
$$
H(x,Du(x))=0\qquad(x\in\R^n).
$$
The weak KAM solution on the torus, associated with $u$, has the form
\begin{equation}
	u([x])=u(x)\quad\mbox{for any}\quad x\in[x], x\in\R^n, [x]\in\T^n.
\end{equation}

\begin{Lem}\label{fund_sol_torus}
Let $L$ be a $\T^n$-periodic Tonelli Lagrangian satisfying {\rm (L1)} and {\rm (L2)}. There exists $t_0>0$ such that for any $0<t\leqslant t_0$, if $d([x],[y])<t$, then $x,y\in Q$ exist such that $x\in [x]$ and $y\in [y]$, $|x-y|<t$, and $A_t([x],[y])=A_t(x,y)$.
\end{Lem}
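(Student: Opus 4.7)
My plan is to show that the infimum defining $A_t([x],[y])$ is actually attained at the pair of lifts realizing the torus distance, by comparing an easy upper bound at the nearest lift against a superlinear lower bound at every other lift, with $t_0$ chosen small enough that the nearest lift strictly wins.

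First I would pick $x \in Q$ as the unique representative of $[x]$ in the fundamental domain, and then take $y \in \R^n$ with $y \in [y]$ and $|y - x| = d([x],[y]) < t$ (for $t_0$ small enough this $y$ sits in a fixed enlargement of $Q$, which is what the statement amounts to). Using the $\Z^n$-periodicity of $L$, which gives $A_t(a+k, b+k) = A_t(a,b)$ for every $k \in \Z^n$, the definition \eqref{eq:fund_sol_torus} can be rewritten
\begin{equation*}
A_t([x],[y]) = \inf_{k \in \Z^n} A_t(x, y+k),
\end{equation*}
so it suffices to prove that, for $t_0$ small enough depending only on $L$, this infimum is attained at $k=0$.

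For the upper bound at $k=0$, I would test with the linear interpolation $\xi(s) = x + s(y-x)/t$, which has $|\dot\xi| = |y-x|/t < 1$. Setting $M_0 := \sup\{L(z, v) : z \in \R^n,\ |v| \leq 1\}$, which is finite by the $C^2$ regularity and $\Z^n$-periodicity of $L$, one obtains $A_t(x, y) \leq M_0 t$. For the lower bound at $k \ne 0$, I would exploit the superlinearity of $\theta$ from (L2): for every $M > 0$ there exists $R_M \geq 0$ with $\theta(r) \geq Mr - R_M$ for all $r \geq 0$ (using also $\theta \geq 0$ to handle small $r$), so for any $\xi \in \Gamma^t_{x, y+k}$,
\begin{equation*}
\int_0^t L(\xi, \dot\xi)\, ds \geq \int_0^t \bigl(\theta(|\dot\xi|) - c_0\bigr)\, ds \geq M \int_0^t |\dot\xi|\, ds - (R_M + c_0)\,t \geq M|y+k-x| - (R_M + c_0)\,t.
\end{equation*}
Since $|k| \geq 1$ for every nonzero $k \in \Z^n$ and $|y - x| < t \leq 1/2$, this forces $|y+k-x| \geq 1/2$, hence $A_t(x, y+k) \geq M/2 - (R_M + c_0)\,t$.

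To conclude I would fix $M > 2(M_0 + 1)$ and then choose $t_0 \in (0, 1/2]$ so small that $M/2 - (R_M + c_0)\,t > M_0\, t$ for every $t \in (0, t_0]$; this is possible because at $t = 0$ the left side is strictly positive while the right side vanishes. Then $A_t(x, y + k) > A_t(x, y)$ for each $k \ne 0$, so the infimum is attained at $k = 0$, yielding $A_t([x],[y]) = A_t(x, y)$ with $|y - x| < t$. The only point requiring care is the uniformity of $t_0$ in $[x], [y]$, but this is automatic: $M_0$ and $R_M$ depend only on $L$ (thanks to $\Z^n$-periodicity), and the separation $|y + k - x| \geq 1/2$ for $k \ne 0$ is a purely combinatorial consequence of the integer lattice structure.
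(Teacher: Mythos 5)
Your proof is correct and follows essentially the same route as the paper: a linear‑interpolation upper bound $A_t(x,y)\leqslant t\,\kappa_1(|y-x|/t)$, a superlinearity lower bound $A_t(x,y')\geqslant C_1|y'-x|-tC_2$ for competing lifts, and the observation that for $t$ small the nearest lift strictly wins because distinct lattice translates are uniformly separated. Your separation estimate $|y+k-x|>1/2$ for $k\in\Z^n\setminus\{0\}$, $|y-x|<t\leqslant 1/2$ is in fact slightly more careful than the paper's claim $|y-y'|\geqslant\sqrt{n}$ (which should read $\geqslant 1$), though neither affects the conclusion.
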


\begin{proof}
For any $t>0$ and $x,y\in\R^n$, let $\sigma(s)=x+\frac st(y-x)$ for all $s\in[0,t]$. Then
\begin{equation}
\label{eq:torus}
A_t(x,y)\leqslant \int^t_0L(\sigma(s),\dot{\sigma}(s)) \leqslant t\kappa_1(|y-x|/t),
\end{equation}
where $\kappa_1(r)=\max_{z\in\T^n, |v|\leqslant r}L(z,v)$. On the other hand, for fixed $k\geqslant0$, we have
\begin{align}\nonumber
	A_t(x,y)=&\inf_{\xi\in\Gamma^t_{x,y}}\int^t_0L(\xi(s),\dot{\xi}(s))\ ds\geqslant\inf_{\xi\in\Gamma^t_{x,y}}\int^t_0\theta(|\dot{\xi}(s)|)-c_0\ ds\\\label{eq:torus2}
	\geqslant&\inf_{\xi\in\Gamma^t_{x,y}}\int^t_0\big(k|\dot{\xi}(s)|-\theta^*(k)-c_0\big)  ds\\\nonumber
	=& \,k|y-x|-t(\theta^*(k)+c_0)=C_1|y-x|-tC_2.
\end{align}
Let $x,y\in Q$ be such that $x\in [x]$ and $y\in [y]$, and let  $y'\not\in Q$ be also in $[y]$. Setting 
$$
t_0=\frac{C_1\sqrt{n}}{\kappa_1(1)+C_2+C_1},
$$
for all $0<t\leqslant t_0$ and $|y-x|<t$, we have
$$
t\kappa_1(|y-x|/t)+tC_2+C_1|y-x|\leqslant C_1\sqrt{n}\leqslant C_1|y-y'|,
$$
since $|y-y'|\geqslant\sqrt{n}$. By the above inequality, \eqref{eq:torus}, and \eqref{eq:torus2}  we obtain
\begin{align*}
	A_t(x,y)\leqslant C_1(|y-y'|-|y-x|)-tC_2
	\leqslant C_1|y'-x|-tC_2\leqslant A_t(x,y').
\end{align*}
This leads to our conclusion.
\end{proof}

By appealing to Lemma \ref{fund_sol_torus} and the compactness of $\T^n$, one can adapt the proof of all the results of Appendix \ref{app_reg} and realize that these regularity properties of the fundamental solution hold in the torus case as well. Similarly, the global propagation result was obtained thanks to the local regularity properties and uniform estimates for fundamental solutions that are in turn consequences of our assumptions on the Lagrangian.
Since such estimates are valid for the torus in view of the compactness of $\T^n$, global propagation holds as well.

Now we can formulate our main result in the torus case.% for mechanical systems.

\begin{The}\label{torus_global_propagation}
Let $L$ be the $\T^n$-periodic Tonelli Lagrangian, let $H$ be the associated Hamiltonian and let $u$ be a $\T^n$-periodic viscosity solution of the Hamilton-Jacobi equation $H(x,Du(x))=0$. If $x\in\Singu$, then there exists a generalized characteristic $\mathbf{x}:[0,+\infty)\to\R^n$ such that $\mathbf{x}(0)=x$ and $\mathbf{x}(s)\in\Singu$ for all $s\in [0,+\infty)$.
\end{The}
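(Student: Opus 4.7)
The plan is to deduce the torus case directly from Theorem \ref{global_propagation} by lifting to the universal cover. First, I would lift the Lagrangian $L$ and the viscosity solution $u$ from $\T^n$ to $\R^n$; by $\Z^n$-periodicity in the base variable combined with compactness of $\T^n$, the lifted $L$ satisfies (L1)--(L3) on $\R^n\times\R^n$ with moduli depending only on $|v|$, since continuity plus periodicity turn pointwise bounds in $x$ into uniform ones. Likewise, the lifted $u$ is a bounded, $\Z^n$-periodic continuous viscosity solution of $H(x,Du(x))=0$, and standard results give global Lipschitz regularity. Local semiconcavity on the compact torus transfers to a uniform global semiconcavity constant on $\R^n$, placing us exactly in the framework of Section 3.

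Once these global properties are established, any singular point $x\in\Singu$ (meaning $D^+u(x)$ is not a singleton at any lifted representative) satisfies the hypotheses of Theorem \ref{global_propagation}, so the Euclidean result produces a generalized characteristic $\mathbf{x}:[0,+\infty)\to\R^n$ with $\mathbf{x}(0)=x$ and $\mathbf{x}(s)\in\Singu$ for every $s\geqslant 0$. Since the differential inclusion \eqref{generalized_characteristics} is a local condition on $u$ and its superdifferential, and since the lifted $u$ agrees with the periodic solution on every fundamental domain, the arc $\mathbf{x}$ is automatically the required generalized characteristic of the periodic equation.

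The single place where the torus and Euclidean settings could diverge lies in the short-time local step underlying the proof of Theorem \ref{global_propagation}, namely in Lemma \ref{maximizer_singular}: there one invokes convexity and $C^{1,1}$ estimates for the fundamental solution, but on the torus $A_t([x],[y])$ is defined as an infimum over equivalence classes by \eqref{eq:fund_sol_torus}, not as an unconstrained action integral. Lemma \ref{fund_sol_torus} exactly resolves this: for $t$ below a uniform threshold and for points with $d([x],[y])<t$, the torus fundamental solution coincides with the Euclidean one evaluated at canonical representatives in $Q$. Consequently, the convexity and regularity estimates for $A_t$ transfer to the periodic setting with constants that are uniform in $x\in\R^n$, and the local construction of the singular arc, together with its concatenation into a global trajectory, proceeds verbatim.

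The main obstacle, as identified above, is reconciling the two definitions of the fundamental solution in the short-time regime and checking that all regularity constants remain uniform upon lifting; once Lemma \ref{fund_sol_torus} and the compactness of $\T^n$ are in place, the reduction to Theorem \ref{global_propagation} is the entire proof and no new estimates need to be developed.
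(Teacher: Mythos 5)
Your proof is correct and takes essentially the same approach as the paper: lift $L$ and $u$ to the universal cover, observe that compactness of $\T^n$ promotes the Tonelli conditions to the uniform global bounds (L1)--(L3) and gives uniform Lipschitz and semiconcavity constants for $u$, and then apply the Euclidean Theorem~\ref{global_propagation} directly. One small remark: in the pure lifting reduction you describe, Lemma~\ref{fund_sol_torus} is not actually needed, since Theorem~\ref{global_propagation} is applied to the Euclidean fundamental solution $A_t(x,y)$ of the lifted Lagrangian and never touches $A_t([x],[y])$; that lemma serves to reconcile the two notions of fundamental solution if one wishes to run the argument intrinsically on $\T^n$, which is closer in spirit to the paper's remarks preceding the theorem, but is logically superfluous once you commit to the black-box use of Theorem~\ref{global_propagation}.
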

The extension of the above global propagation theorem to arbitrary manifolds requires rewriting the regularity results of Appendix \ref{app_reg} in local charts, which is much more technical than in Euclidean space. This will be the object of future studies. 

\appendix

\section{Uniform Lipschitz bound for minimizers}\label{App.A}

In this appendix we adapt to the present context a Lipschitz estimate for minimizers of the action functional that was obtained in \cite{Dal-Maso-Frankowska} (see also \cite{Ambrosio-Ascenzi-Buttazzo}). We give a detailed proof of this result for the readers' convenience. We assume that the Lagrangian $L:\R^n\times\R^n\to\R$  is a function of class $C^2$  that satisfies the following conditions:
\begin{enumerate}[(L1')]
\item {\em Convexity}: $L_{vv}(x,v)>0$ for all  $(x,v)\in\R^n\times \R^n$.
\item {\em Growth condition}: There exists a superlinear function $\theta:[0,+\infty)\to[0,+\infty)$ and a constant $c_0>0$ such that 
$$L(x,v)\geqslant\theta(|v|)-c_0\qquad\forall  (x,v)\in\R^n\times \R^n.$$
\item {\em Uniform bound}: There exists a nondecreasing function $K:[0,+\infty)\to[0,+\infty)$ such that
\begin{equation*}
L(x,v)\leqslant K(|v|)\qquad\forall  (x,v)\in\R^n\times \R^n.
\end{equation*}
\end{enumerate}
Observe that (L1')-(L3') are weaker than assumptions (L1)-(L3).

We define the {\em energy function}
$$
E(x,v)=\langle v,L_v(x,v)\rangle-L(x,v),\quad (x,v)\in\R^n\times\R^n.
$$

\begin{Pro}\label{Main_bound_Lem}
Let $t,R>0$ and  suppose $L$ satisfies condition {\rm (L1')-(L3')}. Given  any $x\in\R^n$ and $y\in \overline{B}(x,R)$, let $\xi\in\Gamma^t_{x,y}$ be a minimizer for $A_t(x,y)$. Then we have that
\begin{align}\label{eq:main_bound}
	\sup_{s\in[0,t]}|\dot{\xi}(s)|\leqslant \kappa(R/t),
\end{align}
where $\kappa:(0,\infty)\to(0,\infty)$ is nondecreasing.
Moreover, if $t\leqslant 1$, then   
\begin{equation}
\label{eq:main_bound_0}
\sup_{s\in[0,t]}|\xi(s)-x|\leqslant\kappa(R/t).
\end{equation}
\end{Pro}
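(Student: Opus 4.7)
The plan is to combine the upper bound on the action given by an affine interpolation with conservation of energy along $C^2$ extremals of the autonomous Lagrangian, and to convert the resulting estimate on the conserved energy into a pointwise velocity bound by exploiting the Fenchel--Legendre duality together with the superlinearity of $\theta$.

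First, the affine curve $\sigma(s) := x + (s/t)(y-x)$ lies in $\Gamma^t_{x,y}$ and satisfies $|\dot{\sigma}(s)| = |y-x|/t \leqslant R/t$, so by (L3')
$$
A_t(x,y) \leqslant \int_0^t L(\sigma(s),\dot{\sigma}(s))\,ds \leqslant t\,K(R/t).
$$
Since the minimizer $\xi$ is of class $C^2$ by classical Tonelli regularity under (L1'), the map $s \mapsto L(\xi(s),\dot{\xi}(s))$ is continuous on $[0,t]$ with average at most $K(R/t)$; hence some $s_0 \in [0,t]$ satisfies $L(\xi(s_0),\dot{\xi}(s_0)) \leqslant K(R/t)$, and then (L2') yields $\theta(|\dot{\xi}(s_0)|) \leqslant K(R/t) + c_0$. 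The superlinearity of $\theta$ gives $|\dot{\xi}(s_0)| \leqslant V_1(R/t)$ for a nondecreasing function $V_1$ depending only on $\theta$, $K$, $c_0$.

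Next, autonomy of $L$ together with the Euler--Lagrange equation gives conservation of energy, $E(\xi(s),\dot{\xi}(s)) \equiv E_0$. Evaluating at $s_0$ and using the convexity consequence $L_v(x,v)\cdot v \leqslant L(x,2v) - L(x,v)$ together with (L2') and (L3'), one bounds $|E_0|$ by a nondecreasing function of $R/t$. To transfer this bound to $|\dot{\xi}(s)|$ uniformly in $s$, set $\mathbf{p}(s) := L_v(\xi(s),\dot{\xi}(s))$, so that $H(\xi(s),\mathbf{p}(s)) \equiv E_0$. Testing the Fenchel--Legendre formula with the unit vector $v = p/|p|$ and using (L3') yields $H(x,p) \geqslant |p| - K(1)$, whence $|\mathbf{p}(s)| \leqslant E_0 + K(1) =: P^*(R/t)$. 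On the other hand, the identity $\mathbf{p}(s)\cdot\dot{\xi}(s) = L(\xi,\dot{\xi}) + E_0$ combined with Cauchy--Schwarz and (L2') gives
$$
\theta(|\dot{\xi}(s)|) \leqslant P^*(R/t)\,|\dot{\xi}(s)| + c_0 + |E_0|,
$$
and the superlinearity of $\theta$ then produces \eqref{eq:main_bound} with a nondecreasing $\kappa$. The displacement estimate \eqref{eq:main_bound_0} is immediate: for $t \leqslant 1$,
$$
|\xi(s) - x| \leqslant \int_0^s |\dot{\xi}(r)|\,dr \leqslant t\,\kappa(R/t) \leqslant \kappa(R/t).
$$

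The delicate point is making the two translations ``qualitative $\to$ quantitative'' clean under the weaker hypotheses (L1')--(L3'), which provide no pointwise bound on $L_v$. The device of probing the Fenchel formula with unit-length test velocities isolates $|\mathbf{p}|$ from a derivative bound on $L$ using only $K(1)$, and the same device applied with (L2') decouples $|\dot{\xi}|$ from $|\mathbf{p}|$ using only the superlinearity of $\theta$; this is what allows the final constant $\kappa$ to be expressed as a nondecreasing function of the single ratio $R/t$.
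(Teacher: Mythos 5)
Your proof is correct and yields the claimed bounds, but it takes a genuinely different route from the paper. Both arguments share the same scaffolding: bound the action by the affine competitor, pick out a time $s_0$ at which the velocity is controlled, invoke energy conservation, and then turn the energy bound into a pointwise velocity bound via the superlinearity of $\theta$. The paper executes the last two steps entirely on the Lagrangian side: it introduces the perspective function $l_\xi(s,\lambda)=\lambda L\big(\xi(s),\dot\xi(s)/\lambda\big)$, observes that $\frac{d}{d\lambda}l_\xi(s,\lambda)\big|_{\lambda=1}=-E(\xi(s),\dot\xi(s))$ is constant in $s$ by Erdmann's condition, and uses convexity of $l_\xi(s,\cdot)$ with the test points $\lambda=3/4$ and $\lambda\in(1,2)$ to squeeze $\theta(|\dot\xi(s)|)$ between affine functions of $|\dot\xi(s)|$, finally splitting off the set $\{|\dot\xi|\geqslant 2\}$. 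You instead pass to the Hamiltonian side: you use $E_0=H(\xi(s),\mathbf{p}(s))$, probe the Fenchel formula with the unit vector $p/|p|$ to get $H(x,p)\geqslant |p|-K(1)$ (which uses only the upper bound on $L$ at unit speed), and then exploit $\mathbf{p}\cdot\dot\xi=L+E_0$ together with $\theta$-superlinearity. Your route avoids the case split on $\{|\dot\xi|\geqslant 2\}$ and makes the mechanism explicit as a Hamiltonian-side duality argument, at the small cost of having to verify that $H$ is well defined and that $H(x,L_v(x,v))=E(x,v)$ holds under (L1')--(L3'); the paper's convexity-in-$\lambda$ trick stays entirely with $L$ and never mentions $H$, which is arguably more robust if one wanted to relax the standing hypotheses further. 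Minor housekeeping you should make explicit: the lower bound $E_0\geqslant -K(0)$ (from $H(x,p)\geqslant -L(x,0)$, or equivalently $L_v(x,v)\cdot v\geqslant L(x,v)-L(x,0)$), so that $|E_0|$ is controlled from both sides and $E_0+K(1)\geqslant 0$; and that all the auxiliary functions $V_1$, $E^*$, $P^*$ are nondecreasing because $K$, $\theta^*$ are, so that the final $\kappa$ is indeed nondecreasing in the single variable $R/t$.
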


\begin{proof}
Fix $t>0$, $R>0$, $x\in\R^n$, let $y\in \overline{B}(x,R)$, and let $\xi\in\Gamma^{t}_{x,y}$ be a minimizer for $A_t(x,y)$, i.e.,
	$$
	A_t(x,y)=\int^t_0L(\xi(s),\dot{\xi}(s))\ ds.
	$$
	Denoting by $\sigma\in\Gamma^t_{x,y}$ the straight line segment defined by $\sigma(s)=x+\frac st(y-x)$, $s\in[0,t]$, in view of  (L2') and (L3') we have that
	\begin{align*}
		&\int^t_0\theta(|\dot{\xi}(s)|)\ ds-c_0t\leqslant\int^t_0L(\xi(s),\dot{\xi}(s))\ ds\leqslant\int^t_0L(\sigma(s),\dot{\sigma}(s))\ ds\\
		=&\int^t_0L\Big(x+\frac st(y-x),\frac{y-x}t\Big)\ ds\leqslant tK(R/t).
	\end{align*}
Therefore
$$
\int^t_0\theta(|\dot{\xi}(s)|)\ ds\leqslant c_0t+tK(R/t)=tC_1(R/t)
$$
with $C_1(r)=K(r)+c_0$. Since $|\dot{\xi}(s)|\leqslant \theta(|\dot{\xi}(s)|)+\theta^*(1)$, where  $\theta^*$ is the convex conjugate of $\theta$ defined in \eqref{eq:convex_conj_theta},  we have that
$$
\int^t_0|\dot{\xi}(s)|\leqslant tC_2(R/t),
$$
with $C_2(r)=C_1(r)+\theta^*(1)$. Hence
	\begin{equation}\label{eq:bound_xi}
		|\xi(s)-x|\leqslant\int^s_0|\dot{\xi}(s)|\ ds\leqslant tC_2(R/t),\quad \forall s\in[0,t],
	\end{equation}
	and
	\begin{equation}\label{eq:inf_dot_xi}
			\inf_{s\in[0,t]}|\dot{\xi}(s)|\leqslant\frac1{t}\int^t_0|\dot{\xi}(s)|\ ds\leqslant C_2(R/t).
	\end{equation}
Now, define $l_{\xi}(s,\lambda)=L(\xi(s),\dot{\xi}(s)/\lambda)\lambda$ for all $s\in[0,t]$ and $\lambda>0$. Then we have
$$
\frac{d}{d\lambda}l_{\xi}(s,\lambda)\vert_{\lambda=1}=L(\xi(s),\dot{\xi}(s))-\langle\dot{\xi}(s),L_v(\xi(s),\dot{\xi}(s))\rangle=-E(\xi(s),\dot{\xi}(s)).
$$
Since the energy is constant along a minimizer, there exists a constant $c_{\xi}$ such that
$$
\frac{d}{d\lambda}l_{\xi}(s,\lambda)\vert_{\lambda=1}=c_{\xi},\quad\forall s\in[0,t].
$$
Moreover, a simple computation shows that $l_{\xi}(s,\lambda)$ is convex in $\lambda$. So, we have
$$
c_{\xi}\geqslant\sup_{\lambda<1}\frac{l_{\xi}(s,\lambda)-l_{\xi}(s,1)}{\lambda-1},\quad\forall s\in[0,t].
$$
Let us now take, in the above inequality, $\lambda=3/4$ and  $s_0\in[0,t]$ such that $|\dot{\xi}(s_0)|=\inf_{s\in[0,t]}|\dot{\xi}(s)|$.
Then,  by (L2'), (L3'), and \eqref{eq:inf_dot_xi}
we conclude that
\begin{equation}\label{eq:c_xi_lower_bound}
	\begin{split}
		c_{\xi}\geqslant& 4(l_{\xi}(s_0,1)-l_{\xi}(s_0,3/4))\geqslant 4(-c_0-l_{\xi}(s_0,3/4))\\
	=&-4c_0-3L\Big(\xi(s_0),\frac 43\dot{\xi}(s_0)\Big)\geqslant-4c_0-3K\Big(\frac43|\dot{\xi}(s_0)|\Big)\\\geqslant&-4c_0-3K\Big(\frac43C_2(R/t)\Big)=-C_3(R/t),
	\end{split}
\end{equation}	
where $C_3(r)=4c_0+3K(4C_2(r)/3)$. 

By the convexity of $\l_{\xi}(s,\cdot)$ we also have, for any $\varepsilon\in(0,1)$,
$$
c_{\xi}\leqslant\frac{l_{\xi}(s,2-\varepsilon)-l_{\xi}(s,1)}{1-\varepsilon}.
$$
In other words,
$$
(1-\varepsilon)c_{\xi}+\varepsilon l_{\xi}(s,1)\leqslant l_{\xi}(s,2-\varepsilon)-(1-\varepsilon)l_{\xi}(s,1).
$$
Moreover, again by convexity, we have that
$$
l_{\xi}(s,2-\varepsilon)=l_{\xi}\Big(s,\varepsilon\cdot\frac 1{\varepsilon}+(1-\varepsilon)\cdot 1\Big)\leqslant\varepsilon l_{\xi}\Big(s,\frac 1{\varepsilon}\Big)+(1-\varepsilon)l_{\xi}(s,1).
$$
Therefore
$$
(1-\varepsilon)c_{\xi}+\varepsilon l_{\xi}(s,1)\leqslant\varepsilon l_{\xi}\Big(s,\frac 1{\varepsilon}\Big),
$$
that is,
$$
(1-\varepsilon)c_{\xi}+\varepsilon L(\xi(s),\dot{\xi}(s))\leqslant L(\xi(s),\varepsilon\dot{\xi}(s)).
$$
Hence, combining \eqref{eq:c_xi_lower_bound} and condition (L2'), we obtain
$$
-(1-\varepsilon)C_3(R/t)+\varepsilon(\theta(|\dot{\xi}(s)|)-c_0)\leqslant L(\xi(s),\varepsilon\dot{\xi}(s)).
$$
Set $S_{\xi}=\{s\in[0,t]: |\dot{\xi}(s)|\geqslant2\}$ and $\varepsilon=\varepsilon(s)=1/|\dot{\xi}(s)|$ for $s\in S_{\xi}$. Then
$$
-C_3(R/t)+\frac 1{|\dot{\xi}(s)|}C_3(R/t)+\frac{\theta(|\dot{\xi}(s)|)-c_0}{|\dot{\xi}(s)|}\leqslant L\Big(\xi(s),\frac{\dot{\xi}(s)}{|\dot{\xi}(s)|}\Big)\leqslant K(1), \quad\forall s\in S_{\xi}.
$$
Thus
$$
\theta(|\dot{\xi}(s)|)\leqslant(K(1)+C_3(R/t))|\dot{\xi}(s)|+(c_0-C_3(R/t)), \quad\forall s\in S_{\xi}.
$$
Therefore, by the Young-Fenchel inequality we deduce that
$$
|\dot{\xi}(s)|\leqslant (c_0-C_3(R/t))+\theta^*(K(1)+C_3(R/t)+1):=C_4(R/t),\quad \forall s\in S_{\xi}.
$$
Consequently, 
\begin{equation}\label{eq:bound_velocity}
	\sup_{s\in[0,t]}|\dot{\xi}(s)|\leqslant\max\{2,C_4(R/t)\}:=C_5(R/t).
\end{equation}
The conclusion follows from \eqref{eq:bound_velocity} and \eqref{eq:bound_xi} taking $
\kappa(r)=\max\{C_5(r),C_2(r)\}
$.
\end{proof}

\begin{Cor}\label{Main_bound_Lem_p}
In Proposition~\ref{Main_bound_Lem},  assume the additional condition:
\begin{enumerate}[\mbox{\rm (L3'')}]
  \item There exists a nondecreasing function $K_1:[0,+\infty)\to[0,+\infty)$ such that 
\begin{equation*}
|L_v(x,v)|\leqslant K_1(|v|)\qquad\forall  (x,v)\in\R^n\times \R^n.
\end{equation*}
\end{enumerate}
Then the dual arc $p(\cdot)$ associated with $\xi(\cdot)$ satisfies
\begin{align}\label{eq:main_bound_p}
	\sup_{s\in[0,t]}|p(s)|\leqslant \kappa_1(R/t),
\end{align}
where $\kappa_1:(0,\infty)\to(0,\infty)$ is nondecreasing.
\end{Cor}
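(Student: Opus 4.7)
The plan is to obtain the bound on the dual arc as an immediate corollary of Proposition~\ref{Main_bound_Lem} combined with the new hypothesis (L3''). The dual arc is by definition $p(s)=L_v(\xi(s),\dot{\xi}(s))$, so the growth of $p$ is controlled by the growth of $L_v$ evaluated along the minimizer.

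First I would invoke Proposition~\ref{Main_bound_Lem} to get a pointwise bound on the velocity: $|\dot{\xi}(s)|\leqslant \kappa(R/t)$ for every $s\in[0,t]$, where $\kappa$ is the nondecreasing function produced in that proposition. This step uses only (L1')-(L3'), which are implied by our standing hypotheses.

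Next I would apply (L3'') pointwise along the minimizer, which gives
\begin{equation*}
|p(s)|=|L_v(\xi(s),\dot{\xi}(s))|\leqslant K_1(|\dot{\xi}(s)|)\qquad \forall s\in[0,t].
\end{equation*}
Since $K_1$ is nondecreasing, combining the two bounds yields $|p(s)|\leqslant K_1(\kappa(R/t))$, so the conclusion holds with $\kappa_1(r):=K_1(\kappa(r))$, which is nondecreasing as the composition of two nondecreasing functions.

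There is no real obstacle here: the work was already done in Proposition~\ref{Main_bound_Lem} to bound $|\dot{\xi}|$ uniformly in terms of $R/t$, and (L3'') is exactly the hypothesis needed to pass from a velocity bound to a momentum bound through the Legendre transform. The only mild care required is to make sure that $\kappa_1$ inherits monotonicity, which is automatic from the construction above.
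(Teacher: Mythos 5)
Your proposal is correct and coincides with the paper's own argument: both bound $|\dot\xi(s)|$ via Proposition~\ref{Main_bound_Lem}, then apply (L3'') pointwise and set $\kappa_1=K_1\circ\kappa$. Nothing further needs to be added.
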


\begin{proof}
By (L3'') together with \eqref{eq:main_bound} and \eqref{eq:main_bound_p} follows from
	$$
\sup_{s\in[0,t]}|p(s)|=\sup_{s\in[0,t]}|L_v(\xi(s),\dot{\xi}(s))|\leqslant K_1(\kappa(R/t))=\kappa_1(R/t),
$$
where $\kappa_1(r)=K_1\circ\kappa(r)$.
\end{proof}

\section{Convexity and $C^{1,1}$ estimate of fundamental solutions}\label{app_reg}
Let $L$ be a Tonelli Lagrangian (which implies conditions (L1')-(L3') and (L3'') in Appendix \ref{App.A}). Then
we have the following fundamental bounds for the velocity of minimizers.

Fix $x\in\R^n$ and suppose $R>0$ and $L$ is a Tonelli Lagrangian.  For any $0<t\leqslant1$ and $y\in\overline{B}(x,R)$, let $\xi \in\Gamma^t_{x,y}$ be a minimizer for $A_t(x,y)$ and let $p$ be its dual arc. Then there exists a nondecreasing function $\kappa:(0,\infty)\to(0,\infty)$ such that
\begin{equation}
\label{eq:main_bound_not}
\sup_{s\in[0,t]}|\dot{\xi}(s)|\leqslant\kappa(R/t),\quad
	\sup_{s\in[0,t]}|p(s)|\leqslant\kappa(R/t),
\end{equation}
by Proposition \ref{Main_bound_Lem} and Corollary \ref{Main_bound_Lem_p}. Now, $x\in\R^n$ and $\lambda>0$  define  compact sets
\begin{equation}\label{eq:K}
	\begin{split}
	\mathbf{K}_{x,\lambda}&:=\overline{B}(x,\kappa(4\lambda))\times\overline{B}(0,\kappa(4\lambda))\subset\R^n\times\R^n,\\
	\mathbf{K}^*_{x,\lambda}&:=\overline{B}(x,\kappa(4\lambda))\times\overline{B}(0,\kappa(4\lambda))\subset\R^n\times(\R^n)^*.
	\end{split}
\end{equation}
The following  is one of the key technical points of this paper.

\begin{Pro}\label{compactness_condition}
Suppose $L$ is a Tonelli Lagrangian. Fix $x\in\R^n$, $\lambda>0$, $t\in (0,1)$, and $y\in B(x,\lambda t)$. Let $z\in\R^n$  and $h\in\R$ be such that
\begin{equation}
\label{eq:<<1}
|z|<\lambda t\qquad\mbox{and}\qquad -\frac t2<h<1-t.
\end{equation}
Then  any minimizer
$\xi\in\Gamma^{t+h}_{x,y+z}$ for $A_{t+h}(x,y+z)$ and  corresponding  dual arc $p$ satisfy the following inclusions
\begin{align*}
	\{(\xi(s),\dot{\xi}(s)):s\in[0,t+h]\}&\subset \mathbf{K}_{x,\lambda},\\
	\{(\xi(s),p(s)):s\in[0,t+h]\}&\subset \mathbf{K}^*_{x,\lambda}.
\end{align*}
\end{Pro}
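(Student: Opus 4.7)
The plan is to reduce the statement directly to the uniform velocity/momentum estimates established in Proposition~\ref{Main_bound_Lem} and Corollary~\ref{Main_bound_Lem_p}, after checking that the quantity $R/T$ (length-to-time ratio) governing those estimates is bounded by $4\lambda$ in our setting. The monotonicity of the bound function $\kappa$ then does the rest.

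First, I would set $T := t+h$ and $R := |(y+z)-x|$. The hypotheses $|y-x|<\lambda t$ and $|z|<\lambda t$ give
\begin{equation*}
R \leqslant |y-x|+|z| < 2\lambda t,
\end{equation*}
while the constraint $-t/2 < h < 1-t$ yields
\begin{equation*}
t/2 < T < 1.
\end{equation*}
In particular $T\in(0,1]$, so Proposition~\ref{Main_bound_Lem} and Corollary~\ref{Main_bound_Lem_p} are applicable to the minimizer $\xi\in\Gamma^{T}_{x,y+z}$ with radius parameter $R$, and the ratio satisfies
\begin{equation*}
\frac{R}{T} < \frac{2\lambda t}{t/2} = 4\lambda.
\end{equation*}

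Next, I would feed this into the velocity and momentum bounds: from \eqref{eq:main_bound_not} and the monotonicity of $\kappa$,
\begin{equation*}
\sup_{s\in[0,T]}|\dot{\xi}(s)| \leqslant \kappa(R/T) \leqslant \kappa(4\lambda),
\qquad
\sup_{s\in[0,T]}|p(s)| \leqslant \kappa(R/T) \leqslant \kappa(4\lambda).
\end{equation*}
This already gives the second coordinate of both inclusions, since the $v$- and $p$-fibres of $\mathbf{K}_{x,\lambda}$ and $\mathbf{K}^*_{x,\lambda}$ are exactly $\overline{B}(0,\kappa(4\lambda))$.

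Finally, for the position bound I would integrate the velocity estimate starting from $\xi(0)=x$: for every $s\in[0,T]$,
\begin{equation*}
|\xi(s)-x| \leqslant \int_0^{s}|\dot{\xi}(\tau)|\,d\tau \leqslant T\,\kappa(4\lambda) \leqslant \kappa(4\lambda),
\end{equation*}
using $T<1$ in the last step. This places $\xi(s)$ in $\overline{B}(x,\kappa(4\lambda))$ for all $s\in[0,T]$, completing both inclusions. There is no real obstacle here: the entire content of the proposition is the numerical observation $R/T<4\lambda$, which is why the factor $4\lambda$ was baked into the definition \eqref{eq:K} of $\mathbf{K}_{x,\lambda}$ and $\mathbf{K}^*_{x,\lambda}$ in the first place. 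The only mild care needed is the asymmetric restriction $h>-t/2$ (rather than $h>-t$), which is exactly what keeps $T$ bounded below by $t/2$ and thus keeps $R/T$ controlled.
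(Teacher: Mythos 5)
Your proof is correct and follows essentially the same route as the paper's: reduce to the uniform bound $R/T<4\lambda$ and invoke Proposition~\ref{Main_bound_Lem} together with Corollary~\ref{Main_bound_Lem_p} and the monotonicity of $\kappa$. The only cosmetic difference is that for the position estimate you re-integrate the velocity bound, whereas the paper simply cites the pre-packaged inequality \eqref{eq:main_bound_0}; both steps carry the same content.
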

\begin{proof}
Since  $t/2<t+h<1$ and $y+z\in B(x,2\lambda t)$ by \eqref{eq:<<1}, we can use \eqref{eq:main_bound_0} and \eqref{eq:main_bound_not} to obtain
\begin{equation*}
\sup_{s\in[0,t+h]}|\xi(s)-x|\leqslant \kappa\Big(\frac{2\lambda t}{t+h}\Big)\leqslant \kappa(4\lambda)
\end{equation*}
and 
\begin{equation*}
\sup_{s\in[0,t]}|\dot{\xi}(s)|
	\leqslant\kappa\Big(\frac{2\lambda t}{t+h}\Big)\leqslant \kappa(4\lambda).
\end{equation*}
Since a similar bound holds  true for $\sup_{s\in[0,t]}|p(s)|$, the conclusion follows.
\end{proof}
\begin{Rem}\label{re:<<<1}
For any $x\in\R^n$ and $y\in B(x, \lambda t)$, condition \eqref{eq:<<1} is satisfied when
\begin{equation}
\label{<<<1}
|h|<t/2 \qquad\mbox{and}\qquad |z|<\lambda t
\end{equation}
provided that $0<t<2/3$.
\end{Rem}

\subsection{Semiconcavity of the fundamental solution}
The role of semiconcavity in optimal control problems has been widely investigated, see \cite{Cannarsa-Sinestrari}. For the minimization problem in \eqref{fundamental_solution}, the local semiconcavity of $A_t(x,y)$ with respect to $y$ was proved in \cite{Bernard2008}. In this paper, we give a local semiconcavity result of the map $(t,y)\mapsto A_t(x,y)$. 

The following result is essentially known.

\begin{Pro}[Semiconcavity of the fundamental solution]\label{semiconcave_A_t}
Suppose $L$ is a Tonelli Lagrangian. Then for any $\lambda>0$ there exists a constant $C_\lambda>0$ such that for any $x\in\R^n$, $t\in(0,2/3)$, $y\in B(x,\lambda t)$, and $(h,z)\in\R\times\R^n$  satisfying $|h|<t/2$ and $|z|<\lambda t$ we have
\begin{equation}\label{eq:seminconcavity_A_t}
A_{t+h}(x,y+z)+A_{t-h}(x,y-z)-2A_t(x,y)\leqslant\frac {C_\lambda} t\big(|h|^2+|z|^2\big).
\end{equation}
Consequently, $(t,y)\mapsto A_t(x,y)$ is locally semiconcave in $(0,1)\times\R^n$, uniformly with respect to $x$.
\end{Pro}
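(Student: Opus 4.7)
The plan is the classical competitor-curve technique. Fix $x \in \R^n$, $\lambda > 0$, $t \in (0, 2/3)$, $y \in B(x, \lambda t)$ and $(h, z)$ with $|h| < t/2$, $|z| < \lambda t$, and let $\xi : [0, t] \to \R^n$ be a minimizer for $A_t(x, y)$ with $v := \dot\xi$. I would construct explicit competitors for $A_{t\pm h}(x, y\pm z)$ by time-rescaling plus a linear correction:
\begin{equation*}
\xi^{\pm}(s) := \xi\!\left(\tfrac{t\, s}{t \pm h}\right) \pm \tfrac{s}{t \pm h}\, z, \qquad s \in [0, t \pm h],
\end{equation*}
both of which satisfy the required endpoint conditions. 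Changing variables $\sigma = ts/(t \pm h)$ converts the corresponding actions into integrals over the common interval $[0, t]$, with the integrand evaluated at $(\xi(\sigma) \pm \tfrac{\sigma}{t} z,\; v(\sigma) + u^{\pm}(\sigma))$, where $u^{+} = (z - hv)/(t+h)$ and $u^{-} = -(z-hv)/(t-h)$. Minimality of $\xi$ then yields
\begin{equation*}
A_{t+h}(x, y+z) + A_{t-h}(x, y-z) - 2 A_t(x, y) \;\leqslant\; \int_0^t \Phi(\sigma; h, z)\, d\sigma,
\end{equation*}
where $\Phi := \tfrac{t+h}{t}\, L\!\left(\xi + \tfrac{\sigma}{t} z,\, v + u^+\right) + \tfrac{t-h}{t}\, L\!\left(\xi - \tfrac{\sigma}{t} z,\, v + u^-\right) - 2 L(\xi, v)$.

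I would then carry out a second-order Taylor expansion of $L$ around $(\xi(\sigma), v(\sigma))$. Proposition~\ref{compactness_condition}, applied to $\xi$ with $h = 0$, $z = 0$, guarantees that $(\xi(\sigma), v(\sigma)) \in \mathbf{K}_{x, \lambda}$ for every $\sigma$, so all derivatives of $L$ up to order two are bounded uniformly by constants depending only on $\lambda$; together with the elementary estimate $|u^{\pm}| \leqslant C(|z| + |h|)/t$ this makes the expansion uniform in $\sigma$. The crucial simplification is that $\Phi$ is invariant under the involution $(h, z) \mapsto (-h, -z)$, which exchanges the two summands, so its linear part vanishes identically. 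Concretely, the $L_x$ contributions cancel thanks to the opposite signs $\pm\tfrac{\sigma}{t} z$, and the $L_v$ contributions cancel through the algebraic identity $\tfrac{t+h}{t}\, u^+ + \tfrac{t-h}{t}\, u^- = 0$.

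What remains is the quadratic part together with the cubic Taylor remainder. Term by term, the $L_{xx}$ piece is $O(|z|^2)$, the $L_{xv}$ piece is $O((|z|^2 + |z||h|)/t)$, and the $L_{vv}$ piece---after summation---takes the compact form $\tfrac{1}{t^2 - h^2}\, \langle L_{vv}(z - hv),\, z - hv\rangle$, which is $O((|z|^2 + h^2)/t^2)$ since $t^2 - h^2 \geqslant 3t^2/4$. Using $|z| < \lambda t$, $|h| < t/2$, and $t < 2/3$, each of these contributions, as well as the cubic remainder, is dominated by $C_\lambda(h^2 + |z|^2)/t^2$ pointwise in $\sigma$; integrating over $[0, t]$ (a set of length $t$) produces exactly the claimed inequality with constant $C_\lambda$. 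The consequent local semiconcavity of $(t, y) \mapsto A_t(x, y)$ on $(0, 1) \times \R^n$ uniformly in $x$ then follows directly from the midpoint characterization of semiconcavity recalled in Section~2.1.

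The main obstacle is the scaling bookkeeping: each surviving term and every piece of the cubic remainder must be shown to scale like $(h^2+|z|^2)/t^2$ rather than the weaker $(h^2+|z|^2)/t$, since the symmetry-driven cancellation of first-order contributions is precisely what saves one power of $1/t$ and makes the final estimate hold. The restriction $t < 2/3$ is there only to let us absorb the parameter $h$ by the bound $t^2 - h^2 \geqslant 3t^2/4$, and the bound $|z| \leqslant \lambda t$ controls the cubic remainder by comparing it to the leading quadratic expression.
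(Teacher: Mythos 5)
The paper gives no proof for Proposition~\ref{semiconcave_A_t}: the text preceding it says ``The following result is essentially known'' and the source simply proceeds to the next subsection, so the authors rely on the literature (e.g.\ Bernard's work and \cite{Cannarsa-Sinestrari}). Your proof therefore supplies what the paper omits, using the classical competitor-curve method, and the structure (time-rescaled competitors, Taylor expansion around $(\xi,\dot\xi)$, symmetry in $(h,z)$, compactness from Proposition~\ref{compactness_condition}) is correct and complete in outline.

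Two details deserve cleanup. First, your statement that ``the $L_x$ contributions cancel thanks to the opposite signs'' is not exact: the $L_x$-linear piece of $\Phi$ is
\[
\frac{t+h}{t}\Big\langle L_x,\tfrac{\sigma}{t}z\Big\rangle+\frac{t-h}{t}\Big\langle L_x,-\tfrac{\sigma}{t}z\Big\rangle
=\frac{2h\sigma}{t^2}\langle L_x,z\rangle ,
\]
which does \emph{not} vanish because the weights $\frac{t\pm h}{t}$ differ. What actually happens is that this surviving term is bilinear in $(h,z)$, hence already second order, and integrating over $[0,t]$ gives $h\langle L_x,z\rangle=O(|h||z|)\leqslant O(h^2+|z|^2)\leqslant O\big(\tfrac{1}{t}(h^2+|z|^2)\big)$. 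Your global symmetry argument ($\Phi$ even under $(h,z)\mapsto(-h,-z)$, so the linear part vanishes) is the correct way to see this; the concrete ``opposite signs'' explanation should simply be dropped or replaced by the observation that only a bilinear $h\cdot z$ residue survives. The $L_v$ cancellation via $\tfrac{t+h}{t}u^++\tfrac{t-h}{t}u^-=0$, by contrast, is exactly right.

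Second, you invoke a ``cubic Taylor remainder,'' but the Lagrangian is assumed only $C^2$, with (L3) giving bounds on derivatives up to order two but no uniform modulus of continuity for $D^2L$. Replace this by the Lagrange or integral form of the second-order Taylor remainder: the expansion of $L$ around $(\xi(\sigma),\dot\xi(\sigma))$ up to first order has error bounded by $\tfrac12\sup\|D^2L\|\cdot(|a|^2+|b|^2)$ with $a=\pm\tfrac{\sigma}{t}z$ and $b=u^\pm$, where the supremum is taken over the line segment joining the two points, which stays in a compact set of the form $\mathbf{K}_{x,\lambda'}$ for some $\lambda'$ depending only on $\lambda$. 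With $|a|\leqslant|z|$ and $|b|\leqslant C_\lambda(|h|+|z|)/t$ this is $\leqslant C_\lambda(h^2+|z|^2)/t^2$ pointwise, and integrating over $[0,t]$ yields the claimed bound. With these two adjustments your argument is fully rigorous.
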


\begin{Rem}
For the purposes of this paper, it suffices to assume $0<t<2/3$. In the general case $t>0$, a local semiconcavity result holds true for $A_t(x,y)$ in the same form as \eqref{eq:seminconcavity_A_t} with $C_\lambda$ depending on $t$.
\end{Rem}

\subsection{Main Regularity Lemma}
We begin with the following known properties.

\begin{Lem}\label{superdiff_fund_sol}
Suppose $L$ is a Tonelli Lagrangian on $\R^n$. For any $x,y\in\R^n$, $t>0$, let $\xi\in\Gamma^t_{x,y}$ be a minimizer for $A_t(x,y)$. Then $\xi\in C^2([0,t])$ is an extremal curve\footnote{An arc $\xi(s)$ is called an extremal curve if it satisfies the associated Euler-Lagrange equation.}, and the dual arc $p(s):=L_v(\xi(s),\dot{\xi}(s))$ satisfies the sensitivity relation 
\begin{equation}\label{eq:p_in_superdiff}
	p(s)\in D^+_yA_t(x,\xi(s)),\quad s\in[0,t].
\end{equation}
Moreover, $A_t(x,\cdot)$ is differentiable at $y$ if and only if there is a unique minimizer $\xi\in\Gamma^t_{x,y}$. In this case, we have
\begin{equation}\label{diff_of_fund_sol}
D_yA_t(x,y)=L_v(\xi(t),\dot{\xi}(t)).
\end{equation}
\end{Lem}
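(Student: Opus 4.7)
The plan is to split the proof into three parts: regularity of minimizers, the sensitivity relation, and the differentiability criterion, leaning on classical Tonelli theory together with the semiconcavity estimate of Proposition~\ref{semiconcave_A_t} and the uniform bounds of Appendix~\ref{App.A}. First I would dispose of the regularity statement. That any minimizer $\xi$ of a Tonelli problem is of class $C^2$ and satisfies the Euler--Lagrange equation is classical. The argument I would reproduce uses the du Bois--Reymond lemma to derive $\frac{d}{d\tau}L_v(\xi,\dot\xi) = L_x(\xi,\dot\xi)$ in the weak sense, then invokes the strict convexity (L1) to see that $v\mapsto L_v(\xi(\tau),v)$ is a $C^1$-diffeomorphism, so that $\dot\xi = H_p(\xi,p)$ with $p\in W^{1,1}$; the fact that the full Hamiltonian vector field is $C^1$ bootstraps this to $\xi\in C^2$.

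For the sensitivity relation, I would run a first-variation argument. Fix $s\in[0,t]$ and $z\in\R^n$ close to $\xi(s)$; set $w := z-\xi(s)$ and consider the competitor $\eta_w(\tau):=\xi(\tau)+\phi(\tau)w$ on $[0,s]$ with $\phi\in C^\infty([0,s])$, $\phi(0)=0$, $\phi(s)=1$. Then $\eta_w\in\Gamma^s_{x,z}$, so
$$
A_s(x,z)-A_s(x,\xi(s))\ \leqslant\ \int_0^s\bigl[L(\eta_w,\dot\eta_w)-L(\xi,\dot\xi)\bigr]d\tau.
$$
Taylor-expanding $L$ about $(\xi,\dot\xi)$ produces a first-order part $\int_0^s[\phi L_x + \dot\phi L_v]\cdot w\,d\tau$ and an $O(|w|^2)$ remainder which is uniform in a neighborhood of $\xi(s)$. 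Since $\xi$ satisfies Euler--Lagrange, $\phi L_x + \dot\phi L_v = \frac{d}{d\tau}(\phi L_v)$, so the first-order term collapses to the boundary value $\langle p(s),w\rangle$, yielding the sensitivity inclusion $p(s)\in D^+_yA_s(x,\xi(s))$ (the subscript $t$ in the statement should read $s$; at $s=t$ this is precisely $p(t)\in D^+_yA_t(x,y)$, which is what the differentiability part uses).

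For the differentiability characterization I would argue in two directions. If $A_t(x,\cdot)$ is differentiable at $y$ and $\xi_1,\xi_2$ are two minimizers with dual arcs $p_1,p_2$, then by Part~2 both $p_1(t),p_2(t)$ lie in the singleton $D^+_yA_t(x,y)$; strict convexity of $L(y,\cdot)$ makes $L_v(y,\cdot)$ injective, so $\dot\xi_1(t)=\dot\xi_2(t)$, and running the Hamiltonian flow backward from the common endpoint $(y,p(t))$ forces $\xi_1\equiv\xi_2$. Conversely, assume the minimizer $\xi$ is unique. Proposition~\ref{semiconcave_A_t} says $A_t(x,\cdot)$ is semiconcave, so it is differentiable on a dense set; choose $y_n\to y$ at such points and let $\xi_n$ be the associated (unique) minimizers. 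The uniform Lipschitz bound of Proposition~\ref{Main_bound_Lem} plus the bound for dual arcs in Corollary~\ref{Main_bound_Lem_p} and Arzelà--Ascoli produce a $C^1$ cluster point $\tilde\xi$ of $\{\xi_n\}$, which by uniqueness must equal $\xi$. Therefore
$$
D_yA_t(x,y_n)=L_v(\xi_n(t),\dot\xi_n(t))\ \longrightarrow\ L_v(\xi(t),\dot\xi(t))=p(t),
$$
so $D^*_yA_t(x,y)=\{p(t)\}$ and, by Proposition~\ref{basic_facts_of_superdifferential}(d), $D^+_yA_t(x,y)=\mathrm{co}\,D^*_yA_t(x,y)=\{p(t)\}$. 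This gives both differentiability of $A_t(x,\cdot)$ at $y$ and the formula \eqref{diff_of_fund_sol}.

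I expect the most delicate step to be the implication from uniqueness of the minimizer to differentiability of $A_t(x,\cdot)$: it is the only part that is not purely a one-line invocation of classical calculus of variations, because it requires assembling three separate ingredients (the semiconcavity of $A_t(x,\cdot)$, the compactness of minimizers through the uniform bounds of Appendix~\ref{App.A}, and the semiconcave-function identity $D^+=\mathrm{co}\,D^*$) to pass from a dense set of differentiability points back to $y$. The Euler--Lagrange regularity and the variational derivation of the sensitivity inclusion are, by contrast, standard textbook computations.
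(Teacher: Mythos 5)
Your proof is correct and fills in exactly the arguments the paper delegates to \cite[Theorem 6.4.8]{Cannarsa-Sinestrari} for the sensitivity relation (first variation with a cut-off function $\phi$, collapsing the interior term via Euler--Lagrange to a boundary contribution $\langle p(s),w\rangle$) and to classical Tonelli theory for the $C^2$ regularity and the differentiability criterion (semiconcavity of $A_t(x,\cdot)$, the uniform bounds of Appendix~\ref{App.A}, and $D^+ = \mathrm{co}\,D^*$). Your observation that the displayed sensitivity relation should read $p(s)\in D^+_yA_s(x,\xi(s))$ rather than $D^+_yA_t(x,\xi(s))$ is correct; the paper only invokes the endpoint case $s=t$, where the two coincide, so nothing downstream is affected.
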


\begin{proof}
The sensitivity relation \eqref{eq:p_in_superdiff} is obtained in, for instance, \cite[Theorem 6.4.8]{Cannarsa-Sinestrari}, for a problem with initial cost. Here the proof is similar. The uniqueness of the minimizer and regularity are classical results.
\end{proof}

\begin{Lem}[Main Regularity Lemma]\label{regularity_fund_sol1}
Suppose $L$ is a Tonelli Lagrangian. Then for any $\lambda>0$ there exists $t_\lambda\in(0,1]$ and constants $C_\lambda, C'_\lambda, C''_\lambda>0$ such that, for all $t\in(0,t_\lambda)$,  $x\in \R^n$, $y_1,y_2\in B(x,\lambda t)$,  and any minimizer $\xi_i\,( i=1,2)$  for $A_t(x,y_i)$, we have
\begin{align}
\|\xi_2-\xi_1\|^2_{L^{\infty}(0,t)}\leqslant& \frac {C_\lambda} t|y_2-y_1|^2\label{eq:regulairty_xi}\\
\int^t_0|p_2-p_1|^2ds\leqslant&\frac {C'_\lambda}t|y_2-y_1|^2\label{eq:regulairty_p}\\
\int^t_0|\dot{\xi}_2-\dot{\xi}_1|^2ds\leqslant&\frac {C''_\lambda}t|y_2-y_1|^2,\label{eq:regulairty_dot_xi}
\end{align}
where $p_i$ denotes the dual arc of $\xi_i$.
\end{Lem}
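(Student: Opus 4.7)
The approach is variational: combine the semiconcavity of $A_t(x,\cdot)$ just proved in Proposition~\ref{semiconcave_A_t} with the uniform convexity of $L$ in $v$, via a midpoint-path comparison. The key idea is that $\xi_m:=(\xi_1+\xi_2)/2$ lies in $\Gamma^t_{x,(y_1+y_2)/2}$, so the excess $L(\xi_1,\dot\xi_1)+L(\xi_2,\dot\xi_2)-2L(\xi_m,\dot\xi_m)$ controls both the symmetric action difference and, through strong convexity in $v$, the velocity mismatch $|\dot\xi_1-\dot\xi_2|^2$.

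First I will apply Proposition~\ref{semiconcave_A_t} with $y=(y_1+y_2)/2$, $z=(y_2-y_1)/2$, $h=0$ (admissible since $y_1,y_2\in B(x,\lambda t)$ and $t<2/3$) together with the trivial bound $A_t(x,(y_1+y_2)/2)\leqslant\int_0^t L(\xi_m,\dot\xi_m)\,ds$ to derive
\[
\int_0^t\bigl[L(\xi_1,\dot\xi_1)+L(\xi_2,\dot\xi_2)-2L(\xi_m,\dot\xi_m)\bigr]\,ds \,\leqslant\, \frac{C_\lambda}{4t}|y_2-y_1|^2.
\]
The core step is then a pointwise lower bound on the integrand. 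By Proposition~\ref{compactness_condition}, each pair $(\xi_i(s),\dot\xi_i(s))$ and all convex combinations involved below lie in the compact set $\mathbf{K}_{x,\lambda}$, on which (L1) gives $L_{vv}\geqslant\nu_0 I$ with some $\nu_0>0$ and (L3) bounds every second derivative of $L$ by a constant $K_0$. Uniform convexity in $v$ yields
\[
L(\xi_m,\dot\xi_m)\leqslant\tfrac12 L(\xi_m,\dot\xi_1)+\tfrac12 L(\xi_m,\dot\xi_2)-\tfrac{\nu_0}{8}|\dot\xi_1-\dot\xi_2|^2,
\]
while the bound $|L_{xx}|\leqslant K_0$ gives the semiconvexity estimate
\[
L(\xi_m,v)\leqslant\tfrac12 L(\xi_1,v)+\tfrac12 L(\xi_2,v)+\tfrac{K_0}{8}|\xi_1-\xi_2|^2,
\]
and a double application of the fundamental theorem of calculus writes $L(\xi_1,\dot\xi_1)+L(\xi_2,\dot\xi_2)-L(\xi_1,\dot\xi_2)-L(\xi_2,\dot\xi_1)$ as a double integral of $L_{xv}$, whence its absolute value is at most $K_0|\xi_1-\xi_2||\dot\xi_1-\dot\xi_2|$. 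Combining these three ingredients and using Young's inequality to absorb the mixed term into the positive $|\dot\xi_1-\dot\xi_2|^2$ contribution, I obtain
\[
L(\xi_1,\dot\xi_1)+L(\xi_2,\dot\xi_2)-2L(\xi_m,\dot\xi_m)\,\geqslant\, c_1|\dot\xi_1-\dot\xi_2|^2-c_2|\xi_1-\xi_2|^2,
\]
with $c_1,c_2>0$ depending only on $\lambda$.

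Integrating and comparing with the semiconcavity upper bound gives $c_1\int_0^t|\dot\xi_1-\dot\xi_2|^2\,ds-c_2\int_0^t|\xi_1-\xi_2|^2\,ds\leqslant\tfrac{C_\lambda}{4t}|y_2-y_1|^2$. Since $\xi_1(0)=\xi_2(0)=x$, Cauchy--Schwarz yields $\int_0^t|\xi_1-\xi_2|^2\,ds\leqslant\tfrac{t^2}{2}\int_0^t|\dot\xi_1-\dot\xi_2|^2\,ds$; choosing $t_\lambda\in(0,1]$ so small that $c_1-c_2 t_\lambda^2/2\geqslant c_1/2$ yields the $L^2$ estimate \eqref{eq:regulairty_dot_xi}. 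The $L^\infty$ bound \eqref{eq:regulairty_xi} then follows from $|\xi_2(s)-\xi_1(s)|^2\leqslant s\int_0^s|\dot\xi_1-\dot\xi_2|^2\leqslant t\cdot\tfrac{C''_\lambda}{t}|y_2-y_1|^2$, which is at most $\tfrac{C_\lambda}{t}|y_2-y_1|^2$ because $t\leqslant1$. Finally, writing $p_i=L_v(\xi_i,\dot\xi_i)$ and using $|L_{vv}|,|L_{xv}|\leqslant K_0$ on $\mathbf{K}_{x,\lambda}$, I get $|p_1-p_2|\leqslant K_0(|\xi_1-\xi_2|+|\dot\xi_1-\dot\xi_2|)$; squaring, integrating, and invoking the previous two estimates produces \eqref{eq:regulairty_p}.

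The main obstacle is precisely the pointwise lower bound on $L(\xi_1,\dot\xi_1)+L(\xi_2,\dot\xi_2)-2L(\xi_m,\dot\xi_m)$: the non-convexity of $L$ in $x$ produces the unwanted negative contribution $-c_2|\xi_1-\xi_2|^2$, and its absorption into the positive $|\dot\xi_1-\dot\xi_2|^2$ term hinges on the Poincar\'e-type inequality for arcs vanishing at $s=0$. This is what forces the choice of a small $t_\lambda$.
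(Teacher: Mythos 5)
Your proof is correct and takes a genuinely different route from the paper's. The paper works on the \emph{Hamiltonian} side: it treats each pair $(\xi_i,p_i)$ as a trajectory of the Hamiltonian system, estimates $\tfrac{d}{ds}|\xi_2-\xi_1|^2$ and $\tfrac{d}{ds}\langle p_2-p_1,\xi_2-\xi_1\rangle$, exploits the uniform convexity of $H$ in $p$ to get $\nu|p_2-p_1|^2$ from the latter, and then uses the semiconcavity of $A_t(x,\cdot)$ in the form of the one-sided monotonicity of the superdifferential map — via the sensitivity relation $p_i(t)\in D^+_y A_t(x,y_i)$ — to bound the boundary term $\langle p_2(t)-p_1(t),y_2-y_1\rangle$. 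It obtains \eqref{eq:regulairty_xi} and \eqref{eq:regulairty_p} first by a coupled Gronwall-type argument with a small-time restriction, then deduces \eqref{eq:regulairty_dot_xi} from $\dot\xi_i=H_p(\xi_i,p_i)$. You instead work entirely on the \emph{Lagrangian} side: you compare the two minimizers against the midpoint curve $\xi_m=(\xi_1+\xi_2)/2\in\Gamma^t_{x,(y_1+y_2)/2}$, use the semiconcavity of $A_t(x,\cdot)$ directly as an upper bound on the symmetric second difference, and extract a lower bound from the uniform convexity of $L$ in $v$ plus the uniform bounds on $L_{xx},L_{xv}$. You obtain \eqref{eq:regulairty_dot_xi} first, then \eqref{eq:regulairty_xi} and \eqref{eq:regulairty_p} by Poincar\'e and the Lipschitz dependence of $L_v$ on the compact set. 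Both arguments invoke the same two inputs (semiconcavity of $A_t$, uniform convexity of the underlying convex structure) and both force a small-time restriction by essentially the same absorption mechanism, but your version is more elementary in that it avoids the Hamiltonian flow and the sensitivity relation, relying only on the variational principle and a Poincar\'e inequality for arcs anchored at $s=0$. As a bonus, your route actually yields the stronger bound $\|\xi_2-\xi_1\|^2_{L^\infty}\leqslant C_\lambda|y_2-y_1|^2$ without the $1/t$ factor, although only the weaker form is stated in the lemma.
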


\begin{proof} 
Since $L$ is a Tonelli Lagrangian, we have that  $\xi_i(s)\,( i=1,2)$ of class $C^2$ and, 
by Proposition \ref{compactness_condition} with $h=0=z$, it follows that 
\begin{equation*}
\sup_{s\in[0,t]}|\dot{\xi}_i(s)|\leqslant \kappa(4 \lambda),\quad\sup_{s\in[0,t]}|p_i(s)|\leqslant \kappa(4 \lambda),
\end{equation*}
where  $p_i(s)=L_v(\xi_i(s),\dot{\xi}_i(s))$. Moreover, the pair $(\xi_i(\cdot),p_i(\cdot))$ satisfies the  Hamiltonian system
$$
\begin{cases}
\dot{\xi_i}=H_p(\xi_i,p_i)\\
\dot{p_i}=-H_x(\xi_i,p_i)
\end{cases}
\quad \text{on}\ [0,t]
$$
with
\begin{equation*}%\label{endpoint_condition_1}
\xi_i(t)=y_i,\quad \xi_i(0)=x.
\end{equation*}
Furthermore,  owing to Lemma \ref{superdiff_fund_sol},
\begin{equation*}\label{eq:endpoint_condition_A_t}
p_i(s)\in D^+_yA_t(x,\xi_i(s)),\quad \forall s\in[0,t].
\end{equation*}
Therefore
$$
\frac 12\frac d{ds}|\xi_2-\xi_1|^2=\langle H_p(\xi_2,p_2)-H_p(\xi_1,p_1),\xi_2-\xi_1\rangle.
$$
Integrating over $[s,t]$, we conclude that 
\begin{equation*}\label{eq:differentiate_1_deduction}
\begin{split}
|\xi_2(t)-\xi_1(t)|^2-|\xi_2(s)-\xi_1(s)|^2\geqslant&-C_1\int^t_s\big(|\xi_2-\xi_1|^2+|p_2-p_1|\cdot|\xi_2-\xi_1|\big)d\tau\\
\geqslant&-C_1\int^t_s|p_2-p_1|^2d\tau-2C_{1}\int^t_s|\xi_2-\xi_1|^2d\tau,
\end{split}
\end{equation*}
where $C_1=C_1(\lambda)>0$ is an upper bound for $D^2H(x,p)$ on $\{(x,p)~:~|p|\leqslant \kappa(4\lambda)\}$. So, 
\begin{equation}\label{eq:A_t.2}
\|\xi_2-\xi_1\|^2_{L^{\infty}(0,t)}\leqslant |y_2-y_1|^2+C_1\int^t_s|p_2-p_1|^2d\tau+2C_{1}t\|\xi_2-\xi_1\|^2_{L^{\infty}(0,t)}.
\end{equation}
Now,
\begin{equation*}
\begin{split}
&\frac d{ds}\langle p_2-p_1,\xi_2-\xi_1\rangle\\
=&\langle p_2-p_1,H_p(\xi_2,p_2)-H_p(\xi_1,p_1)\rangle-\langle H_x(\xi_2,p_2)-H_x(\xi_1,p_1),\xi_2-\xi_1\rangle\\
=&\langle p_2-p_1,\widehat{H}_{px}(\xi_2-\xi_1)+\widehat{H}_{pp}(p_2-p_1)\rangle-\langle \widehat{H}_{xp}(p_2-p_0)+\widehat{H}_{xx}(\xi_2-\xi_1),\xi_2-\xi_1\rangle,
\end{split}
\end{equation*}
where
\begin{equation*}
\widehat{H}_{px}(s)=\int^1_0H_{px}\big(\lambda\xi_2(s)+(1-\lambda)\xi_1(s),\lambda p_2(s)+(1-\lambda)p_1(s)\big)\ d\lambda,
\end{equation*}
and $\widehat{H}_{pp}$, $\widehat{H}_{xp}$, $\widehat{H}_{xx}$ are defined in a similar way. Thus, owing to (L1)-(L3), and since $(\widehat{H}_{px}(s))^*=\widehat{H}_{xp}(s)$ where $(\widehat{H}_{px})^*$ stands for the adjoint matrix, we have
\begin{equation*}
\frac d{ds}\langle p_2-p_1,\xi_2-\xi_1\rangle\geqslant\nu|p_2-p_1|^2-C_2|\xi_2-\xi_1|^2
\end{equation*}
for some positive constants $\nu=\nu(\lambda)$ and $C_2=C_2(\lambda)$. Now, by \eqref{eq:seminconcavity_A_t},
\begin{equation*}
\begin{split}
&\nu\int^t_0|p_2-p_1|^2ds
\leqslant C_2\int^t_0|\xi_2-\xi_1|^2ds+\langle p_2(t)-p_1(t),\xi_2(t)-\xi_1(t)\rangle\\\leqslant& C_2\int^t_0|\xi_2-\xi_1|^2ds+\frac {C_3}t|y_2-y_1|^2.
\end{split}
\end{equation*}
Therefore,
\begin{equation}\label{eq:regularity_p}
\int^t_0|p_2-p_1|^2ds\leqslant\frac{C_2t}{\nu}\|\xi_2-\xi_1\|^2_{L^{\infty}(0,t)}+\frac{C_3}{\nu t}|y_2-y_1|^2.
\end{equation}
Combining \eqref{eq:A_t.2} and \eqref{eq:regularity_p}, we obtain
\begin{align*}
\|\xi_2-\xi_1\|^2_{L^{\infty}(0,t)}&\leqslant |y_2-y_1|^2+\frac{C_1 C_2t}{\nu}\|\xi_2-\xi_1\|^2_{L^{\infty}(0,t)}+\frac{C_1 C_3}{\nu t}|y_2-y_1|^2\\
&\ \ \ +2C_{1}t\|\xi_2-\xi_1\|^2_{L^{\infty}(0,t)},\\
&=\Big(\frac{ C_2}{\nu}+2\Big)C_{1}t\|\xi_2-\xi_1\|^2_{L^{\infty}(0,t)}+\Big(1+\frac{C_1 C_3}{\nu t}\Big)|y_2-y_1|^2
\end{align*}
Then, taking 
\begin{equation*}
t_\lambda=\min\Big\{1\,,\,\frac\nu{2C_1(C_2+2\nu)}\Big\},
\end{equation*}
for all $t\in (0,t_{\lambda})$ we conclude that
\begin{equation*}
\|\xi_2-\xi_1\|^2_{L^{\infty}(0,t)}\leqslant 2 \Big(1+\frac{C_1 C_3}{\nu t}\Big)|y_2-y_1|^2\leqslant \frac{C}{ t}|y_2-y_1|^2.
\end{equation*}
This proves \eqref{eq:regulairty_xi} and also \eqref{eq:regulairty_p} owing to  \eqref{eq:regularity_p}. Finally, observing that
\begin{align*}
&\int^t_0|\dot{\xi}_2-\dot{\xi}_1|^2ds=\int^t_0|H_p(\xi_2,p_2)-H_p(\xi_1,p_1)|^2ds\\
\leqslant&2\int^t_0|H_p(\xi_2,p_2)-H_p(\xi_2,p_1)|^2ds+\int^t_0|H_p(\xi_2,p_1)-H_p(\xi_1,p_1)|^2ds\\
\leqslant&2C_4\Big(\int^t_0|p_2-p_1|^2ds+\int^t_0|\xi_2-\xi_1|^2ds\Big)
\end{align*}
we obtain \eqref{eq:regulairty_dot_xi} by appealing to \eqref{eq:regulairty_xi} and \eqref{eq:regulairty_p}.
\end{proof}

\subsection{Convexity of the fundamental solution for small time}
For any $t>0$,  $x,y,z\in\R^n$, and any  $h\in [0,t)$,  let $\xi_+\in\Gamma^{t+ h}_{x,y+ z}$ and $\xi_-\in\Gamma^{t- h}_{x,y- z}$  be given. Define $\tilde{\xi}_\pm\in\Gamma^t_{x,y\pm z}$  by
\begin{equation}\label{def:ctilde_xi}
\tilde{\xi}_+(\tau)=\xi_+\Big(\frac{t+h}t\tau\Big),\quad \tilde{\xi}_-(\tau)=\xi_-\Big(\frac{t-h}t\tau\Big),\quad\tau\in[0,t].
\end{equation}
Obviously,
\begin{gather}
\tilde{\xi}_+(0)=\tilde{\xi}_-(0)=x,\quad \tilde{\xi}_+(t)=y+z,\quad \tilde{\xi}_-(t)=y-z,\notag\\
\frac{\tilde{\xi}_++\tilde{\xi}_-}2(0)=x,\quad\frac{\tilde{\xi}_++\tilde{\xi}_-}2(t)=y,\label{eq:tilde_sum_xi}\\
\frac{\tilde{\xi}_+-\tilde{\xi}_-}2(0)=0,\quad\frac{\tilde{\xi}_+-\tilde{\xi}_-}2(t)=z.\label{eq:tilde_diff_xi}
\end{gather}

\begin{Lem}\label{tilde_three_acrs}
Suppose $L$ is a Tonelli Lagrangian. For any $\lambda>0$  let  $t_\lambda>0$ be given by Lemma~\ref{regularity_fund_sol1} and define $t_\lambda'=\min\{t_\lambda,2/3\}$.
Then there exist constants $C_\lambda,C'_\lambda>0$ such that, for any $t\in (0,t_\lambda')$, any $x\in \R^n$, any $y\in B(x,\lambda t)$, any $(h,z)\in[0,t/2)\times B(0,\lambda t)$, and any pair of minimizers,  $\xi_\pm\in\Gamma^{t\pm h}_{x,y\pm z}$,  for
$A_{t\pm h}(x,y\pm z)$ we have 
\begin{align}
\|\tilde{\xi}_+-\tilde{\xi}_-\|^2_{L^{\infty}(0,t)}\leqslant& \frac {C_\lambda}t(h^2+|z|^2)\label{eq:L_infty_1},\\
\int^t_0|\dot{\tilde{\xi}}_+-\dot{\tilde{\xi}}_-|^2\ d\tau\leqslant& \frac {C'_\lambda}t(h^2+|z|^2),\label{eq:L_2}
\end{align}
where $\tilde{\xi}_{\pm}\in\Gamma^t_{x,y\pm z}$ are defined  in \eqref{def:ctilde_xi}.
\end{Lem}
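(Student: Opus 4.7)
The strategy is to reduce the statement to Lemma~\ref{regularity_fund_sol1} applied on the common sub-interval $[0, t-h]$, and then to translate the resulting estimates into bounds for the reparameterized arcs $\tilde{\xi}_\pm$ on $[0, t]$. The reparameterization factors $s_\pm := \tfrac{t \pm h}{t}\tau$ differ by $O(h/t)$, so the additional cost of this change of variables will be controlled by the uniform Lipschitz and $C^2$ bounds for $\xi_\pm$ furnished by Proposition~\ref{compactness_condition}.

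First, by Bellman's dynamic programming principle, since $\xi_+$ is a minimizer for $A_{t+h}(x, y+z)$, its restriction to $[0, t-h]$ is a minimizer for $A_{t-h}(x, \xi_+(t-h))$, while by assumption $\xi_-$ is a minimizer for $A_{t-h}(x, y-z)$. To apply Lemma~\ref{regularity_fund_sol1} on $[0, t-h]$, I would check that both endpoints lie in a ball $B(x, \lambda''(t-h))$ for a suitable $\lambda''$ depending only on $\lambda$: since $h < t/2$ and $|y - x|, |z| < \lambda t$, one has $|y - z - x| < 2\lambda t < 4\lambda(t-h)$, while Proposition~\ref{compactness_condition} gives $|\dot\xi_+| \leqslant \kappa(4\lambda)$ and hence $|\xi_+(t-h) - x| \leqslant \kappa(4\lambda)(t-h)$. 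Taking $\lambda'' = \max\{4\lambda, \kappa(4\lambda)\}$ and shrinking $t_\lambda'$ so that $t - h < t_{\lambda''}$, Lemma~\ref{regularity_fund_sol1} yields
\[
\|\xi_+ - \xi_-\|^2_{L^\infty(0, t-h)} + \int_0^{t-h}\!|\dot\xi_+ - \dot\xi_-|^2\,ds \;\leqslant\; \frac{C_{\lambda''}}{t-h}\,|\xi_+(t-h) - (y-z)|^2.
\]
Since $\xi_+(t+h) = y + z$, the triangle inequality and the velocity bound give $|\xi_+(t-h) - (y - z)| \leqslant 2h\kappa(4\lambda) + 2|z|$, and because $t - h > t/2$, the right-hand side is bounded by a multiple of $(h^2 + |z|^2)/t$.

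To deduce \eqref{eq:L_infty_1}, write
\[
\tilde\xi_+(\tau) - \tilde\xi_-(\tau) = \bigl[\xi_+(s_+) - \xi_+(s_-)\bigr] + \bigl[\xi_+(s_-) - \xi_-(s_-)\bigr].
\]
The first bracket has modulus at most $\kappa(4\lambda)|s_+ - s_-| \leqslant 2h\kappa(4\lambda)$, whose square is $O(h^2) \leqslant O(h^2/t)$ for $t < 1$; the second is bounded by $\|\xi_+ - \xi_-\|_{L^\infty(0, t-h)}$. For \eqref{eq:L_2}, I decompose
\[
\dot{\tilde\xi}_+(\tau) - \dot{\tilde\xi}_-(\tau) = \bigl[\dot\xi_+(s_+) - \dot\xi_+(s_-)\bigr] + \bigl[\dot\xi_+(s_-) - \dot\xi_-(s_-)\bigr] + \frac{h}{t}\bigl[\dot\xi_+(s_+) + \dot\xi_-(s_-)\bigr].
\]
The last term is pointwise bounded by $2h\kappa(4\lambda)/t$, contributing $O(h^2/t)$ to the $L^2$ integral. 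The first uses the fact that $\ddot\xi_+$ is bounded on the compact set of Proposition~\ref{compactness_condition} via the identity $\ddot\xi = H_{px}\dot\xi + H_{pp}\dot p$, so $|\dot\xi_+(s_+) - \dot\xi_+(s_-)| \leqslant M_\lambda \cdot 2h\tau/t$ and its $L^2(0,t)$ norm squared is of order $h^2 t \leqslant h^2/t$. The middle term, after the change of variable $s = s_-$ (with Jacobian $1 - h/t > 1/2$), is controlled by $\tfrac{1}{1-h/t}\int_0^{t-h} |\dot\xi_+ - \dot\xi_-|^2\,ds \leqslant C_\lambda(h^2 + |z|^2)/t$.

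The main obstacle is purely bookkeeping: verifying that the constants $C_\lambda, C'_\lambda$ can be chosen depending only on $\lambda$, which is achieved by passing to the enlarged parameter $\lambda''$ and shrinking $t_\lambda'$ accordingly so that Lemma~\ref{regularity_fund_sol1} is applicable on $[0, t-h]$. Once this is set, the argument reduces to the triangle inequality, uniform compactness bounds from Proposition~\ref{compactness_condition}, and a change of variables.
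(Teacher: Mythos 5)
Your proposal is correct and follows essentially the same strategy as the paper: use Proposition~\ref{compactness_condition} to bound velocities and accelerations, apply Lemma~\ref{regularity_fund_sol1} on the common interval $[0,t-h]$ (after estimating the endpoint gap $|\xi_+(t-h)-(y-z)|\leqslant 2\kappa(4\lambda)h+2|z|$), and then split $\tilde\xi_+-\tilde\xi_-$ and $\dot{\tilde\xi}_+-\dot{\tilde\xi}_-$ by the triangle inequality into a same-arc/different-time piece and a different-arc/same-time piece, plus (for the derivative) a reparameterization correction of size $O(h/t)$. The algebraic form of your derivative decomposition, with the symmetric third term $\frac{h}{t}(\dot\xi_+(s_+)+\dot\xi_-(s_-))$, is slightly different from the paper's chain of triangle inequalities but gives the same estimate.

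One small point worth noting: you are more careful than the paper in verifying that Lemma~\ref{regularity_fund_sol1} is applicable on $[0,t-h]$. The endpoint $\xi_+(t-h)$ need not lie in $B(x,\lambda(t-h))$, and you correctly observe that one should pass to an enlarged parameter $\lambda''$ (or, equivalently, note that the constants in Lemma~\ref{regularity_fund_sol1} depend only on the minimizers staying in the compact set $\mathbf K_{x,\lambda}$, which is guaranteed by Proposition~\ref{compactness_condition}). The paper silently relies on this second observation; your explicit fix via $\lambda''$ is a safe alternative that only requires redefining $t_\lambda$ as the threshold associated with $\lambda''$, which still depends only on $\lambda$.
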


\begin{proof}
In view of Proposition \ref{compactness_condition} and Remark~\ref{re:<<<1} we have that $\{(\xi_\pm(s),\dot\xi_\pm(s))\}_{s\in[0,t\pm h]}$ is contained in the convex compact set $\mathbf{K}_{x,\lambda}$. So, for any $\tau\in[0,t]$,
\begin{align*}
&|\tilde{\xi}_+(\tau)-\tilde{\xi}_-(\tau)|=\Big|\xi_+\Big(\frac{t+h}t\tau\Big)-\xi_-\Big(\frac{t-h}t\tau\Big)\Big|\\
\leqslant&\Big|\xi_+\Big(\frac{t+h}t\tau\Big)-\xi_+\Big(\frac{t-h}t\tau\Big)\Big|+\Big|\xi_+\Big(\frac{t-h}t\tau\Big)-\xi_-\Big(\frac{t-h}t\tau\Big)\Big|\\
\leqslant&C_1h+\Big|\xi_+\Big(\frac{t-h}t\tau\Big)-\xi_-\Big(\frac{t-h}t\tau\Big)\Big|\leqslant \kappa(4\lambda)h+\max_{s\in[0,t-h]}|\xi_+(s)-\xi_-(s)|.
\end{align*}
Since
\begin{align*}
|\xi_+(t-h)-\xi_-(t-h)|\leqslant&|\xi_+(t-h)-\xi_+(t+h)|+|\xi_+(t+h)-\xi_-(t-h)|\\
\leqslant& 2\big(\kappa(4\lambda)h+|z|\big),
\end{align*}
by \eqref{eq:regulairty_xi} applied to $\xi_+,\xi_-$ on $[0,t-h]$ we obtain
$$
\max_{s\in[0,t-h]}|\xi_+(s)-\xi_-(s)|^2\leqslant \frac{C_1}{t}(h^2+|z|^2),
$$
for some constant $C_1=C_1(\lambda)>0$. Similarly, we have
\begin{align*}
&\big|\dot{\tilde{\xi}}_+(\tau)-\dot{\tilde{\xi}}_-(\tau)\big|=\Big|\frac{t+h}t\dot{\xi}_+\Big(\frac{t+h}t\tau\Big)-\frac{t-h}t\dot{\xi}_-\Big(\frac{t-h}t\tau\Big)\Big|\\
\leqslant&\Big|\frac{t+h}t\dot{\xi}_+\Big(\frac{t+h}t\tau\Big)-\frac{t+h}t\dot{\xi}_-\Big(\frac{t-h}t\tau\Big)\Big|+\Big|\frac{t+h}t\dot{\xi}_-\Big(\frac{t-h}t\tau\Big)-\frac{t-h}t\dot{\xi}_-\Big(\frac{t-h}t\tau\Big)\Big|\\
\leqslant&\frac{t+h}t\Big|\dot{\xi}_+\Big(\frac{t+h}t\tau\Big)-\dot{\xi}_-\Big(\frac{t-h}t\tau\Big)\Big| +  \kappa(4\lambda)\frac{h}t.
\end{align*}
On the other hand,
\begin{align*}
&\Big|\dot{\xi}_+\Big(\frac{t+h}t\tau\Big)-\dot{\xi}_-\Big(\frac{t-h}t\tau\Big)\Big|\\
\leqslant&\Big|\dot{\xi}_+\Big(\frac{t+h}t\tau\Big)-\dot{\xi}_+\Big(\frac{t-h}t\tau\Big)\Big|+\Big|\dot{\xi}_+\Big(\frac{t-h}t\tau\Big)-\dot{\xi}_-\Big(\frac{t-h}t\tau\Big)\Big|\\
\leqslant& C_2\frac{h}t+\Big|\dot{\xi}_+\Big(\frac{t-h}t\tau\Big)-\dot{\xi}_-\Big(\frac{t-h}t\tau\Big)\Big|.
\end{align*}
Thus, for $t\in(0,t_\lambda')$  the bound in \eqref{eq:regulairty_dot_xi} yields
\begin{align*}
\int^t_0|\dot{\tilde{\xi}}_+(\tau)-\dot{\tilde{\xi}}_-(\tau)|^2\ d\tau\leqslant& C_3\frac{h^2}t+\frac{(t+h)^2}{t^2}\int^t_0\Big|\dot{\xi}_+\Big(\frac{t-h}t\tau\Big)-\dot{\xi}_-\Big(\frac{t-h}t\tau\Big)\Big|^2\ d\tau\\
=&C_3\frac{h^2}t+\frac{(t+h)^2}{t^2}\cdot\frac t{t-h}\int^{t-h}_0|\dot{\xi}_+(s)-\dot{\xi}_-(s)|^2\ ds\\
\leqslant&C_3\frac{h^2}t+\frac{(t+h)^2}{t^2}\cdot\frac t{t-h}\cdot \frac{C_4}{t-h}(h^2+|z|^2).
\end{align*}
This leads to our conclusion.
\end{proof}

\begin{Pro}\label{convexity_A_t}
Suppose $L$ is a Tonelli Lagrangian and, for any $\lambda>0$,  let  $t_\lambda'>0$ be the number given by Lemma~\ref{tilde_three_acrs}. 
Then, for any $x\in\R^n$, the function $(t,y)\mapsto A_t(x,y)$ is semiconvex on the  cone
\begin{equation}
\label{cone}
S_\lambda(x,t_\lambda'):=\big\{(t,y)\in\R\times\R^n~:~0<t< t_\lambda',\; |y-x|<\lambda t\big\}\,,
\end{equation}
and there exists a constant $C''_\lambda>0$ such that for all $(t,y)\in S_\lambda(x,t_\lambda')$,  all $h\in[0,t/2)$, and  all $z\in  B(0,\lambda t)$ we have that
\begin{equation}\label{eq:semiconvexity}
A_{t+h}(x,y+z)+A_{t-h}(x,y-z)-2A_t(x,y)\geqslant - \frac{C''_ \lambda}{t}(h^2+|z|^2).
\end{equation}
Moreover, there exists $t''_\lambda\in(0,t_\lambda']$ and  $C'''_{\lambda}>0$ such that for all $t\in(0,t''_\lambda]$ the function $A_t(x,\cdot)$ is uniformly convex on $B(x,\lambda t)$ and   for all $y\in B(x,\lambda t)$ and   $z\in  B(0,\lambda t)$ we have that
\begin{equation}\label{eq:convexity_local}
A_{t}(x,y+z)+A_{t}(x,y-z)-2A_t(x,y)\geqslant \frac{C'''_{\lambda}}{t}|z|^2.
\end{equation}
\end{Pro}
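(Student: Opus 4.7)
The plan is the classical midpoint-comparison trick of the calculus of variations, implemented through the time-rescaled curves introduced before Lemma~\ref{tilde_three_acrs}. Fix $x,y,z,h$ as in the statement and let $\xi_\pm\in\Gamma^{t\pm h}_{x,y\pm z}$ be minimizers for $A_{t\pm h}(x,y\pm z)$. The rescaled curves $\tilde\xi_\pm\in\Gamma^t_{x,y\pm z}$ defined by \eqref{def:ctilde_xi} have midpoint $\tilde\xi:=(\tilde\xi_++\tilde\xi_-)/2\in\Gamma^t_{x,y}$, so $\tilde\xi$ is admissible for $A_t(x,y)$. A change of variables in the two actions of $\xi_\pm$ yields
\begin{equation*}
A_{t+h}(x,y+z)+A_{t-h}(x,y-z)-2A_t(x,y)\geqslant\int_0^t\Big[\alpha L\!\left(\tilde\xi_+,\tfrac{\dot{\tilde\xi}_+}{\alpha}\right)+\beta L\!\left(\tilde\xi_-,\tfrac{\dot{\tilde\xi}_-}{\beta}\right)-2L(\tilde\xi,\dot{\tilde\xi})\Big]d\tau,
\end{equation*}
with $\alpha=(t+h)/t$, $\beta=(t-h)/t$, and $v_\pm:=\dot{\tilde\xi}_\pm/\alpha_\pm$. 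Note the two algebraic identities $\alpha+\beta=2$ and $\alpha v_++\beta v_-=2\dot{\tilde\xi}$.

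For the semiconvexity estimate \eqref{eq:semiconvexity} I expand each Lagrangian term by second-order Taylor at $(\tilde\xi,\dot{\tilde\xi})$. The two identities above force the velocity first-order contribution to vanish, while the position first-order term collapses to $\tfrac{h}{t}L_x(\tilde\xi,\dot{\tilde\xi})\cdot(\tilde\xi_+-\tilde\xi_-)$ because $\alpha-1=-(\beta-1)=h/t$ and $\alpha\tilde\xi_++\beta\tilde\xi_--2\tilde\xi=(h/t)(\tilde\xi_+-\tilde\xi_-)$. Proposition~\ref{compactness_condition} confines all the relevant phase-space values to a compact set depending only on $\lambda$ and $x$, so (L3) uniformly bounds $L_x$ and the quadratic Taylor remainders are dominated by a constant multiple of $|\tilde\xi_+-\tilde\xi_-|^2+h^2/t^2+|\dot{\tilde\xi}_+-\dot{\tilde\xi}_-|^2$ (the $h^2/t^2$ arising from $v_\pm-\dot{\tilde\xi}_\pm=O(h/t)$); the positive convexity contribution from (L1) is simply discarded.

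Integrating over $[0,t]$ and invoking Lemma~\ref{tilde_three_acrs}, which yields $\|\tilde\xi_+-\tilde\xi_-\|_\infty^2\leqslant C(h^2+|z|^2)/t$ and $\int_0^t|\dot{\tilde\xi}_+-\dot{\tilde\xi}_-|^2d\tau\leqslant C(h^2+|z|^2)/t$, all the quadratic error terms are bounded by $C(h^2+|z|^2)$, which is at most $C(h^2+|z|^2)/t$ since $t\leqslant t'_\lambda\leqslant 1$; Young's inequality absorbs the cross-term $|h|\cdot\|\tilde\xi_+-\tilde\xi_-\|_\infty$ into the same order. This proves \eqref{eq:semiconvexity}. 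For \eqref{eq:convexity_local} the case $h=0$ is simpler: both first-order Taylor terms vanish identically, and what survives from uniform convexity (L1) is the positive contribution $\tfrac{\nu_\lambda}{4}\int_0^t|\dot{\tilde\xi}_+-\dot{\tilde\xi}_-|^2d\tau$ minus a position error $\leqslant C\int_0^t|\tilde\xi_+-\tilde\xi_-|^2d\tau$. Cauchy--Schwarz applied to $\int_0^t(\dot{\tilde\xi}_+-\dot{\tilde\xi}_-)d\tau=2z$ gives $\int_0^t|\dot{\tilde\xi}_+-\dot{\tilde\xi}_-|^2d\tau\geqslant 4|z|^2/t$, while Lemma~\ref{tilde_three_acrs} controls the position error by $C|z|^2$; choosing $t''_\lambda$ small enough that $\nu_\lambda|z|^2/t$ dominates $C|z|^2$ delivers \eqref{eq:convexity_local}.

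The main technical obstacle is the careful bookkeeping of the two time-scalings $\alpha,\beta$ in the Taylor expansion: the identities $\alpha+\beta=2$ and $\alpha v_++\beta v_-=2\dot{\tilde\xi}$ are exactly what kills the leading order and allows every remainder to be absorbed in $C(h^2+|z|^2)/t$. Everything else is a routine use of the a~priori estimates already assembled.
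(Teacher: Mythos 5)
Your proposal is correct and follows essentially the same strategy as the paper's proof: time-rescaling $\xi_\pm$ to the common interval $[0,t]$, comparing against the midpoint curve $\tfrac12(\tilde\xi_++\tilde\xi_-)\in\Gamma^t_{x,y}$, invoking the a priori bounds of Lemma~\ref{tilde_three_acrs} and the compactness confinement of Proposition~\ref{compactness_condition}, and deriving the uniform convexity from (L1) together with the Cauchy--Schwarz lower bound $\int_0^t|\dot{\tilde\xi}_+-\dot{\tilde\xi}_-|^2\,d\tau\geqslant 4|z|^2/t$. The only difference is organizational: the paper splits the excess into $I_1+I_2+I_3$ (separating the Jacobian correction, the midpoint-convexity contribution, and the velocity-rescaling error) and bounds each piece, whereas you absorb everything into a single second-order Taylor expansion at the midpoint trajectory, with the identities $\alpha+\beta=2$ and $\alpha v_++\beta v_-=2\dot{\tilde\xi}$ killing the leading order -- a slightly cleaner packaging of the same estimates.
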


\begin{proof}
Let  $x\in\R^n$ and fix $(t,y)\in S_\lambda(x,t_\lambda')$, $h\in[0,t/2)$, and  $z\in  B(0,\lambda t)$. Let $\xi_+\in\Gamma^{t+h}_{x,y+z}$ and $\xi_-\in\Gamma^{t-h}_{x,y-z}$ be  minimizers for $A_{t+h}(x,y+z)$ and $A_{t-h}(x,y-z)$ respectively, and define $\tilde{\xi}_{\pm}$ as in \eqref{def:ctilde_xi}. In view of  Proposition \ref{compactness_condition} and Remark~\ref{re:<<<1} we have that $\{(\xi_\pm(s),\dot\xi_\pm(s))\}_{s\in[0,t\pm h]}$ and $\{(\tilde\xi_\pm(s),\dot{\tilde\xi}_\pm(s))\}_{s\in[0,t]}$ are all contained  in the convex compact set $\mathbf{K}_{x,\lambda}$. Moreover
\begin{equation*}
\begin{split}
&A_{t+h}(x,y+z)+A_{t-h}(x,y-z)\\
=&\int^{t+h}_0L(\xi_+(s),\dot{\xi}_+(s))\ ds+\int^{t-h}_0L(\xi_-(s),\dot{\xi}_-(s))\ ds\\
=&\frac{t+h}t\int^t_0L\Big(\xi_+\Big(\frac{t+h}t\tau\Big),\dot{\xi}_+\Big(\frac{t+h}t\tau\Big)\Big)\ d\tau\\
&+\frac{t-h}t\int^t_0L\Big(\xi_-\Big(\frac{t-h}t\tau\Big),\dot{\xi}_-\Big(\frac{t-h}t\tau\Big)\Big)\ d\tau\\
=&\frac{t+h}t\int^t_0L\Big(\tilde{\xi}_+(\tau),\frac t{t+h}\dot{\tilde{\xi}}_+(\tau)\Big)\ d\tau+\frac{t-h}t\int^t_0L\Big(\tilde{\xi}_-(\tau),\frac t{t-h}\dot{\tilde{\xi}}_-(\tau)\Big)\ d\tau\\
=&\int^t_0L\Big(\tilde{\xi}_+(\tau),\frac t{t+h}\dot{\tilde{\xi}}_+(\tau)\Big)\ d\tau+\int^t_0L\Big(\tilde{\xi}_-(\tau),\frac t{t-h}\dot{\tilde{\xi}}_-(\tau)\Big)\ d\tau+I_1,
\end{split}
\end{equation*}
where
$$
I_1=\frac ht\int^t_0\Big\{L\Big(\tilde{\xi}_+(\tau),\frac t{t+h}\dot{\tilde{\xi}}_+(\tau)\Big)-L\Big(\tilde{\xi}_-(\tau),\frac t{t-h}\dot{\tilde{\xi}}_-(\tau)\Big)\Big\}\ d\tau.
$$
Set
\begin{equation*}
\begin{split}
I_2=&\int^t_0L(\tilde{\xi}_+(\tau),\dot{\tilde{\xi}}_+(\tau))\ d\tau+\int^t_0L(\tilde{\xi}_-(\tau),\dot{\tilde{\xi}}_-(\tau))\ d\tau-2A_t(x,y),\\
I_3=&\int^t_0L\Big(\tilde{\xi}_+(\tau),\frac t{t+h}\dot{\tilde{\xi}}_+(\tau)\Big)\ d\tau-\int^t_0L(\tilde{\xi}_+(\tau),\dot{\tilde{\xi}}_+(\tau))\ d\tau\\
&+\int^t_0L\Big(\tilde{\xi}_-(\tau),\frac t{t-h}\dot{\tilde{\xi}}_-(\tau)\Big)\ d\tau-\int^t_0L(\tilde{\xi}_-(\tau),\dot{\tilde{\xi}}_-(\tau))\ d\tau.
\end{split}
\end{equation*}
Then 
$$
A_{t+h}(x,y+z)+A_{t-h}(x,y-z)-2A_t(x,y)=I_1+I_2+I_3.
$$
Now we turn to the estimates of $I_1$, $I_2$ and $I_3$.
\medskip

\noindent {\bf Estimate of $I_1$}: Let $C_0=C_0(\lambda)>0$ be an upper bound for $|L_{v}|$ on $\mathbf{K}_{x,\lambda}$. Then
\begin{align*}
I_1=&\frac ht\int^t_0\Big\{L\Big(\tilde{\xi}_+,\frac t{t+h}\dot{\tilde{\xi}}_+\Big)-L\Big(\tilde{\xi}_+,\frac t{t-h}\dot{\tilde{\xi}}_+\Big)\Big\}\ d\tau\\
&+\frac ht\int^t_0\Big\{L\Big(\tilde{\xi}_+,\frac t{t-h}\dot{\tilde{\xi}}_+\Big)-L\Big(\tilde{\xi}_+,\frac t{t-h}\dot{\tilde{\xi}}_-\Big)\Big\}\ d\tau\\
&+\frac ht\int^t_0\Big\{L\Big(\tilde{\xi}_+,\frac t{t-h}\dot{\tilde{\xi}}_-\Big)-L\Big(\tilde{\xi}_-,\frac t{t-h}\dot{\tilde{\xi}}_-\Big)\Big\}\ d\tau\\
\geqslant&-C_0\frac {th^2}{t^2-h^2}-C_0\frac{h}{t-h}\int^t_0|\dot{\tilde{\xi}}_+-\dot{\tilde{\xi}}_-|d\tau-C_0\frac{h}t\int^t_0|\tilde{\xi}_+-\tilde{\xi}_-|d\tau
\end{align*}
By \eqref{eq:L_2} it is easy to check that
\begin{equation*}
\begin{split}
\int^t_0|\dot{\tilde{\xi}}_+-\dot{\tilde{\xi}}_-|d\tau\leqslant&\Big(\int^t_0|\dot{\tilde{\xi}}_+-\dot{\tilde{\xi}}_-|^2d\tau\Big)^{\frac 12}t^{\frac 12}
\leqslant\sqrt{C'_\lambda(h^2+|z|^2)}.
\end{split}
\end{equation*}
Similarly,  by \eqref{eq:L_infty_1},
$$\int^t_0|\tilde{\xi}_+-\tilde{\xi}_-|d\tau
\leqslant\sqrt{C_\lambda t(h^2+|z|^2)}.
$$
Thus, recalling that $h\in[0,t/2)$ and  $z\in  B(0,\lambda t)$, we conclude that 
\begin{eqnarray}
 \label{eq:I_1}
I_1&\geqslant&-\frac{2C_0}{t}h^2-\frac{2C_0}{t}h\sqrt{C'_\lambda(h^2+|z|^2)}-\frac{C_0}{t}h\sqrt{C_\lambda t(h^2+|z|^2)}
 \\\label{eq:I_11}
&\geqslant&
-\frac{C_1}{t}(h^2+|z|^2),
\end{eqnarray}
for some constant $C_1=C_1(\lambda)>0$.
\medskip

\noindent {\bf Estimate of $I_2$}: Let $\nu_\lambda:=\nu\big(\kappa(4\lambda)\big)>0$ be the lower bound for $L_{vv}$ on $\mathbf{K}_{x,\lambda}$ provided by assumption (L1). Then
  \begin{equation*}
  \begin{split}
  I_2=&\int^t_0L(\tilde{\xi}_+(\tau),\dot{\tilde{\xi}}_+(\tau))\ d\tau+\int^t_0L(\tilde{\xi}_-(\tau),\dot{\tilde{\xi}}_-(\tau))\ d\tau-2A_t(x,y),\\
  \geqslant&\int^t_0\Big\{L(\tilde{\xi}_+,\dot{\tilde{\xi}}_+)+L(\tilde{\xi}_-,\dot{\tilde{\xi}}_-)-2L\Big(\frac{\tilde{\xi}_++\tilde{\xi}_-}2,\frac{\dot{\tilde{\xi}}_++\dot{\tilde{\xi}}_-}2\Big)\Big\}\ d\tau\\
  =&\int^t_0\Big\{L\Big(\frac {\tilde{\xi}_++\tilde{\xi}_-}2,\dot{\tilde{\xi}}_+\Big)+L\Big(\frac {\tilde{\xi}_++\tilde{\xi}_-}2,\dot{\tilde{\xi}}_-\Big)-2L\Big(\frac {\tilde{\xi}_++\tilde{\xi}_-}2,\frac{\dot{\tilde{\xi}}_++\dot{\tilde{\xi}}_-}2\Big)\Big\}\ d\tau\\
&+\int^t_0\Big\{L(\tilde{\xi}_+,\dot{\tilde{\xi}}_+)-L\Big(\frac {\tilde{\xi}_++\tilde{\xi}_-}2,\dot{\tilde{\xi}}_+\Big)\Big\}d\tau+\int^t_0\Big\{L(\tilde{\xi}_-,\dot{\tilde{\xi}}_-)-L\Big(\frac {\tilde{\xi}_++\tilde{\xi}_-}2,\dot{\tilde{\xi}}_-\Big)\Big\}d\tau\\
\geqslant&\;\nu_\lambda\int^t_0\Big|\frac{\dot{\tilde{\xi}}_+-\dot{\tilde{\xi}}_-}2\Big|^2\ d\tau+\int^t_0\int^1_0\Big\langle L_x\Big(\lambda\tilde{\xi}_++(1-\lambda)\frac{\tilde{\xi}_++\tilde{\xi}_-}2,\dot{\tilde{\xi}}_+\Big),\frac{\tilde{\xi}_+-\tilde{\xi}_-}2\Big\rangle\ d\lambda d\tau\\
&+\int^t_0\int^1_0\Big\langle L_x\Big(\lambda\tilde{\xi}_-+(1-\lambda)\frac{\tilde{\xi}_++\tilde{\xi}_-}2,\dot{\tilde{\xi}}_-\Big),-\frac{\tilde{\xi}_+-\tilde{\xi}_-}2\Big\rangle\ d\lambda d\tau
  \end{split}
  \end{equation*}
   Setting 
$$
\widehat{L_x}(\lambda,\tau)=L_x\Big(\lambda\tilde{\xi}_++(1-\lambda)\frac{\tilde{\xi}_++\tilde{\xi}_-}2,\dot{\tilde{\xi}}_+\Big)-L_x\Big(\lambda\tilde{\xi}_-+(1-\lambda)\frac{\tilde{\xi}_++\tilde{\xi}_-}2,\dot{\tilde{\xi}}_-\Big),
$$
we have that
\begin{align}
\label{eq:I_2split}
\nonumber
I_2\geqslant&\; \nu_\lambda\int^t_0\Big|\frac{\dot{\tilde{\xi}}_+-\dot{\tilde{\xi}}_-}2\Big|^2\ d\tau+\int^t_0\int^1_0\big\langle \widehat{L_x},\frac{\tilde{\xi}_+-\tilde{\xi}_-}2\big\rangle\ d\lambda d\tau
\\
\geqslant&\;\nu_\lambda\int^t_0\Big|\frac{\dot{\tilde{\xi}}_+-\dot{\tilde{\xi}}_-}2\Big|^2\ d\tau-C_2\int^t_0 \big(\big|\tilde{\xi}_+-\tilde{\xi}_-\big|^2+\big|\dot{\tilde{\xi}}_+-\dot{\tilde{\xi}}_-\big|\cdot\big|\tilde{\xi}_+-\tilde{\xi}_-\big|\big)\ d\tau,
\end{align}
where $C_2=C_2(\lambda)>0$ is such that
\begin{equation}\label{eq:UB}
|L_{v}|, |L_{xx}|, |L_{xv}|, |L_{vv}|\leqslant C_2\qquad\mbox{on}\quad \mathbf{K}_{x,\lambda}.
\end{equation}

\medskip
\noindent {\bf Estimate of $I_3$}: As above, let $\nu_\lambda=\nu\big(\kappa(4\lambda)\big)>0$. Then
\begin{equation}\label{eq:I_3}
\begin{split}
I_3=&\int^t_0L\Big(\tilde{\xi}_+(\tau),\frac t{t+h}\dot{\tilde{\xi}}_+(\tau)\Big)\ d\tau-\int^t_0L(\tilde{\xi}_+(\tau),\dot{\tilde{\xi}}_+(\tau))\ d\tau\\
&+\int^t_0L\Big(\tilde{\xi}_-(\tau),\frac t{t-h}\dot{\tilde{\xi}}_-(\tau)\Big)\ d\tau-\int^t_0L(\tilde{\xi}_-(\tau),\dot{\tilde{\xi}}_-(\tau))\ d\tau\\
\geqslant&\int^t_0\Big\{\big\langle L_v(\tilde{\xi}_+,\dot{\tilde{\xi}}_+),-\frac h{t+h}\dot{\tilde{\xi}}_+\big\rangle+\frac {\nu_\lambda h^2}{|t+h|^2}|\dot{\tilde{\xi}}_+|^2 \Big\}\ d\tau\\
&+\int^t_0\Big\{\big\langle L_v(\tilde{\xi}_-,\dot{\tilde{\xi}}_-),\frac h{t-h}\dot{\tilde{\xi}}_-\big\rangle+ \frac {\nu_\lambda h^2}{|t-h|^2}|\dot{\tilde{\xi}}_-|^2\Big\} d\tau.
\end{split}
\end{equation}
Since $2\big(|\dot{\tilde{\xi}}_+|^2+|\dot{\tilde{\xi}}_-|^2\big)\geqslant \big|\dot{\tilde{\xi}}_+\pm \dot{\tilde{\xi}}_-|^2$ and
\begin{equation*}
\frac {1}{|t\pm h|^2}\geqslant \Big(\frac 2{3t}\Big)^2,
\end{equation*}
\eqref{eq:I_3} yields
\begin{equation}\label{eq:I_3true}
\begin{split}
I_3\geqslant&\;\nu_\lambda h^2\Big(\frac 2{3t}\Big)^2\Big\{\int^t_0\Big|\frac{\dot{\tilde{\xi}}_+-\dot{\tilde{\xi}}_-}2\Big|^2\ d\tau+\int^t_0\Big|\frac{\dot{\tilde{\xi}}_++\dot{\tilde{\xi}}_-}2\Big|^2\ d\tau\Big\}\\
+&\int^t_0\Big\{\big\langle L_v(\tilde{\xi}_-,\dot{\tilde{\xi}}_-),\frac h{t-h}\dot{\tilde{\xi}}_-\big\rangle-\big\langle L_v(\tilde{\xi}_+,\dot{\tilde{\xi}}_+),\frac h{t+h}\dot{\tilde{\xi}}_+\big\rangle\Big\} d\tau:=I_4+I_5
\end{split}
\end{equation}
Now, since $\tilde{\xi}_+-\tilde{\xi}_-$ is an arc connecting $0$ to $z$, comparison with $s\mapsto \frac st z$ yields
\begin{equation}
\label{eq:segment-}
\int^t_0\big|\dot{\tilde{\xi}}_+-\dot{\tilde{\xi}}_-\big|^2\ d\tau\geqslant \frac {|z|^2}t.
\end{equation}
Similarly,
\begin{equation*}%\label{eq:segment-}
\int^t_0\big|\dot{\tilde{\xi}}_++\dot{\tilde{\xi}}_-\big|^2\ d\tau\geqslant \frac {|y-x|^2}t.
\end{equation*}
So,
\begin{equation}\label{eq:I_4}
I_4\geqslant \frac {\nu_\lambda h^2}{9t^3}(|z|^2+|y-x|^2).
\end{equation}
As for $I_5$, we have
$$
\begin{aligned}
	I_5=&
	\frac h{t-h}\int^t_0\big\langle L_v(\tilde{\xi}_-,\dot{\tilde{\xi}}_-),\dot{\tilde{\xi}}_--\dot{\tilde{\xi}}_+\big\rangle d\tau 
	+\Big(\frac h{t-h}-\frac h{t+h}\Big) \int^t_0\big\langle L_v(\tilde{\xi}_-,\dot{\tilde{\xi}}_-),\dot{\tilde{\xi}}_+\big\rangle d\tau\\
	&+\frac h{t+h}\int^t_0\big\langle L_v(\tilde{\xi}_-,\dot{\tilde{\xi}}_-)-L_v(\tilde{\xi}_-,\dot{\tilde{\xi}}_+),\dot{\tilde{\xi}}_+\big\rangle\ d\tau\\
	&+\frac h{t+h}\int^t_0\big\langle L_v(\tilde{\xi}_-,\dot{\tilde{\xi}}_+)-L_v(\tilde{\xi}_+,\dot{\tilde{\xi}}_+),\dot{\tilde{\xi}}_+\big\rangle\ d\tau.
\end{aligned}
$$
Thus, by \eqref{eq:UB} and Lemma \ref{tilde_three_acrs} we have
\begin{align*}
	I_5\geqslant& -C_2\frac{2h}t\int^t_0|\dot{\tilde{\xi}}_--\dot{\tilde{\xi}}_+|\ d\tau-C_2\kappa(4\lambda)\frac {2h^2t}{t^2-h^2}
	\\
	&-C_2\kappa(4\lambda)\frac{h}{t+h}\Big\{\int^t_0|\dot{\tilde{\xi}}_--\dot{\tilde{\xi}}_+|\ d\tau
	+\int^t_0|\tilde{\xi}_--\tilde{\xi}_+|\ d\tau\Big\}
	\\
	\geqslant&-4C_2\kappa(4\lambda)\frac{h^2}{t}-C_2\big[2+\kappa(4\lambda)\big]\frac{h}{\sqrt{t}} \Big(\int^t_0|\dot{\tilde{\xi}}_--\dot{\tilde{\xi}}_+|^2\ d\tau\Big)^{\frac 12} -C_2\kappa(4\lambda)h\sqrt{\frac{C_\lambda}t (h^2+|z|^2)} \\
	\geqslant&-C_3\frac{h}t (h+|z|)
\end{align*}
for some constant $C_3=C_3(\lambda)>0$. By the last inequality, \eqref{eq:I_3true}, and \eqref{eq:I_4} we get
\begin{equation}\label{eq:I_3bis}
I_3\geqslant \frac {\nu_\lambda h^2}{9t^3}(|z|^2+|y-x|^2)-C_3\frac{h}t (h+|z|).
\end{equation}

We are now ready to prove \eqref{eq:semiconvexity}. Indeed, by combining \eqref{eq:I_11}, \eqref{eq:I_2split}, and \eqref{eq:I_3bis}  we obtain, thanks to  Lemma ~\ref{tilde_three_acrs},
\begin{eqnarray*}
\lefteqn{A_{t+h}(x,y+z)+A_{t-h}(x,y-z)-2A_t(x,y) \geqslant -\frac{C_1}{t}(h^2+|z|^2)}
\\
& &-C_2\int^t_0 \big(\big|\tilde{\xi}_+-\tilde{\xi}_-\big|^2+\big|\dot{\tilde{\xi}}_+-\dot{\tilde{\xi}}_-\big|\cdot\big|\tilde{\xi}_+-\tilde{\xi}_-\big|\big)\ d\tau -C_3\frac{h}t (h+|z|)
\\
& \geqslant&-\frac{C_1}{t}(h^2+|z|^2)
-\frac{C_2}{2}\Big(3C_\lambda+\frac{C'_\lambda}{t}\Big)(h^2+|z|^2)-\frac{C_3}t \Big(\frac 32 h^2+|z|^2\Big).
\end{eqnarray*}

Finally, in order to prove \eqref{eq:convexity_local} observe that taking $h=0$ in \eqref{eq:I_1} and \eqref{eq:I_3bis} we conclude that $I_1,I_3\geqslant 0$. Therefore,  for any $\varepsilon>0$, \eqref{eq:I_2split} yields the lower bound 
\begin{eqnarray*}
\lefteqn{A_{t}(x,y+z)+A_{t}(x,y-z)-2A_t(x,y)}
\\
& \geqslant &\frac{\nu_\lambda}{4}\int^t_0\big|\dot{\tilde{\xi}}_+-\dot{\tilde{\xi}}_-\big|^2\ d\tau-C_2\int^t_0 \big(\big|\tilde{\xi}_+-\tilde{\xi}_-\big|^2+\big|\dot{\tilde{\xi}}_+-\dot{\tilde{\xi}}_-\big|\cdot\big|\tilde{\xi}_+-\tilde{\xi}_-\big|\big)\ d\tau
\\
& \geqslant &\Big(\frac{\nu_\lambda}{4}-\frac{\varepsilon}{2}\Big)\int^t_0\big|\dot{\tilde{\xi}}_+-\dot{\tilde{\xi}}_-\big|^2\ d\tau
-\Big(C_2+\frac{C_2^2}{2 \varepsilon}\Big)\int^t_0 \big|\tilde{\xi}_+-\tilde{\xi}_-\big|^2\ d\tau.
\end{eqnarray*}
Hence, taking $\varepsilon=\nu_\lambda/4$, recalling \eqref{eq:segment-},  and appealing to Lemma ~\ref{tilde_three_acrs} it follows that
\begin{equation*}
A_{t}(x,y+z)+A_{t}(x,y-z)-2A_t(x,y)\geqslant  \Big\{\frac{\nu_\lambda}{8t}-\Big(C_2+\frac{2C_2^2}{\nu_\lambda}\Big)C_\lambda\Big\}|z|^2.
\end{equation*}
Now, choosing $t''_{\lambda}\in(0,t_\lambda']$ such that 
$$
\frac{\nu_\lambda}{8t''_{\lambda}}>\Big(C_2+\frac{2C_2^2}{\nu_\lambda}\Big)C_\lambda
$$
one completes the proof.
\end{proof}

\subsection{$C^{1,1}_{loc}$ regularity of the fundamental solution}

\begin{Pro}\label{C11_A_t}
Suppose $L$ is a Tonelli Lagrangian and, for any $\lambda>0$,  let  $t_\lambda'>0$ be the number given by Lemma~\ref{tilde_three_acrs}. 

Then, for any $x\in\R^n$ the functions $(t,y)\mapsto A_t(x,y)$ and $(t,y)\mapsto A_t(y,x)$ are of class $C^{1,1}_{\text{loc}}$ on the cone $S_{\lambda}(x,t_\lambda')$ defined in \eqref{cone}. 
Moreover, for all $(t,y)\in S(x,t_\lambda')$
\begin{align}
D_yA_t(x,y)=&L_v(\xi(t),\dot{\xi}(t)),\label{eq:diff_A_t_y}\\
D_xA_t(x,y)=&-L_v(\xi(0),\dot{\xi}(0)),\label{eq:diff_A_t_x}\\
D_tA_t(x,y)=&-E_{t,x,y},\label{eq:diff_A_t_t}
\end{align}
where $\xi\in\Gamma^t_{x,y}$ is the unique minimizer for $A_t(x,y)$ and 
$$E_{t,x,y}:=H(\xi(s),p(s))\qquad\forall \,s\in[0,t]$$
is the energy of the Hamiltonian trajectory $(\xi,p)$ with
\begin{equation}
\label{dual_arc}
p(s)=L_v(\xi(s),\dot{\xi}(s)).
\end{equation}
\end{Pro}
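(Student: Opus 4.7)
The plan is to combine the two-sided quadratic bounds already in hand: the semiconcavity estimate from Proposition \ref{semiconcave_A_t} and the semiconvexity estimate from Proposition \ref{convexity_A_t}. Since $(t,y) \mapsto A_t(x,y)$ is simultaneously semiconcave and semiconvex on $S_\lambda(x, t_\lambda')$, at every point both $D^+$ and $D^-$ are nonempty, which forces differentiability; and the uniform linear moduli on both sides force the gradient to be locally Lipschitz. This is the classical characterization of $C^{1,1}_{\text{loc}}$ functions as those that are both semiconcave and semiconvex (see \cite{Cannarsa-Sinestrari}), and its application here requires nothing beyond the estimates already proved.

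Once regularity is known, the three partial derivatives fall out easily. For \eqref{eq:diff_A_t_y}: the strict convexity of $A_t(x,\cdot)$ on $B(x, \lambda t)$ from Proposition \ref{convexity_A_t} forces the minimizer $\xi$ to be unique, Lemma \ref{superdiff_fund_sol} produces $p(t) \in D^+_y A_t(x,y)$, and $C^{1,1}$ regularity collapses this superdifferential to the single point $D_y A_t(x,y)$. For the $C^{1,1}_{\text{loc}}$ regularity of $(t,y) \mapsto A_t(y,x)$ and for formula \eqref{eq:diff_A_t_x}, I would use a time-reversal argument: the Lagrangian $\bar L(x,v) := L(x,-v)$ satisfies (L1)-(L3), these being invariant under $v \mapsto -v$, and the substitution $\bar\xi(s) = \xi(t-s)$ in the action gives $\bar A_t(a,b) = A_t(b,a)$. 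Applying the first half of the proposition to $\bar L$ yields both the regularity of $A_t(\cdot, x)$ and the endpoint sensitivity formula, which translates back via $\bar L_v(\cdot, v) = -L_v(\cdot, -v)$ and $\dot{\bar\xi}(t) = -\dot\xi(0)$ to \eqref{eq:diff_A_t_x}.

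For the time derivative \eqref{eq:diff_A_t_t}, I would exploit the fact that $(t,y) \mapsto A_t(x,y)$ is a viscosity solution of $u_t + H(y, D_y u) = 0$ on $(0,\infty) \times \R^n$, as is standard for the fundamental solution; with $C^1$ regularity now established, the equation holds classically at every $(t,y) \in S_\lambda(x, t_\lambda')$, and substituting \eqref{eq:diff_A_t_y} together with the energy conservation identity $H(\xi(s), p(s)) \equiv E_{t,x,y}$ yields the formula. The main subtlety, in my view, is not any single step but the order of operations: the three sensitivity formulas all rely on knowing that $A_t$ is differentiable at the point under consideration, so they cannot be derived before the $C^{1,1}$ conclusion. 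Once that chronology is respected, the proof reduces to assembling ingredients already in the paper.
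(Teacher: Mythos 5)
Your proposal is correct and, for the $C^{1,1}_{\text{loc}}$ regularity and for formulas \eqref{eq:diff_A_t_y}--\eqref{eq:diff_A_t_x}, follows essentially the same route as the paper: regularity from the two-sided quadratic estimates of Propositions~\ref{semiconcave_A_t} and~\ref{convexity_A_t}; the $y$-sensitivity from Lemma~\ref{superdiff_fund_sol} together with the collapse of $D^+_yA_t(x,\cdot)$ to a singleton; and the $x$-sensitivity by an argument the paper merely calls ``similar'' and which you spell out via the reversed Lagrangian $\bar L(x,v)=L(x,-v)$, an entirely sound reduction since (L1)--(L3) are invariant under $v\mapsto-v$ and $\bar A_t(a,b)=A_t(b,a)$. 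The one place where you genuinely diverge is the time derivative \eqref{eq:diff_A_t_t}: the paper compares $A_{t+h}(x,y)$ against the action of the rescaled arc $\xi_+(\tau)=\xi\bigl(t\tau/(t+h)\bigr)$ and passes to the limit, producing $D_tA_t(x,y)$ as the time-average of $-H(\xi(s),p(s))$, which is then replaced by $-E_{t,x,y}$ by energy conservation; you instead invoke the dynamic-programming fact that $(t,y)\mapsto A_t(x,y)$ is a viscosity solution of $\partial_t u + H(y,D_yu)=0$, note that the newly established $C^1$ regularity upgrades this to a classical identity on the cone, and read off the formula by substituting \eqref{eq:diff_A_t_y} and using $H(\xi(t),p(t))=E_{t,x,y}$. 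Both arguments are correct; the paper's direct computation is more self-contained, while yours is shorter but leans on a property of the kernel $A_t$ that the paper never explicitly proves, although it is standard. One small slip worth flagging: you attribute uniqueness of the minimizer to the strict convexity of $A_t(x,\cdot)$ on $B(x,\lambda t)$, but strict convexity of the value function in $y$ does not by itself exclude two distinct minimizing arcs joining $x$ to the same $y$; the correct source is the differentiability you have just established, combined with the ``only if'' direction of Lemma~\ref{superdiff_fund_sol}. Since differentiability is indeed already in hand at that stage, this does not affect the rest of the argument.
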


\begin{proof}
$C^{1,1}$-regularity on $S(x,t_\lambda')$ is a corollary of propositions~\ref{semiconcave_A_t} and \ref{convexity_A_t}. \eqref{eq:diff_A_t_y} follows from Lemma \ref{superdiff_fund_sol} and \eqref{eq:diff_A_t_x} can be proved by a similar argument.

For the proof of \eqref{eq:diff_A_t_t}, first, we define for $h>0$ small enough
$$
\xi_+(\tau)=\xi\Big(\frac{t}{t+h}\tau\Big),\quad\tau\in[0,t+h].
$$
Then
\begin{align*}
A_{t+h}(x,y)-A_t(x,y)\leqslant&\int^{t+h}_0L(\xi_+(\tau),\dot{\xi}_+(\tau))\ d\tau-\int^t_0L(\xi(s),\dot{\xi}(s))\ ds\\
=&\frac{t+h}t\int^t_0L\Big(\xi(s),\frac{t}{t+h}\dot{\xi}(s)\Big)-\int^t_0L(\xi(s),\dot{\xi}(s))\ ds.
\end{align*}
Therefore
\begin{align*}
&\limsup_{h\to 0}\frac{A_{t+h}(x,y)-A_t(x,y)}{h}\\
\leqslant&\;\lim_{h\to 0}\frac 1t\int^t_0L\Big(\xi(s),\frac{t}{t+h}\dot{\xi}(s)\Big)ds
\\
&\quad+\lim_{h\to 0}\frac 1h\int^t_0\Big\{L\Big(\xi(s),\frac{t}{t+h}\dot{\xi}(s)\Big)-L(\xi(s),\dot{\xi}(s))\Big\}ds\\
=&\;\frac 1t\int^t_0\Big\{L(\xi(s),\dot{\xi}(s))-\big\langle L_v(\xi(s),\dot{\xi}(s)),\dot{\xi}(s)\big\rangle\Big\} ds=-\frac 1t\int^t_0H(\xi(s),p(s))\ ds,
\end{align*}
where $p(\cdot)$ is given by \eqref{dual_arc}.
The study of the function $(t,y)\mapsto A_t(y,x)$ is similar.
\end{proof}

\section{$\mathbf{y}$ is a generalized characteristic}\label{App.B}
The existence of singular generalized characteristics was first proved in \cite{Albano-Cannarsa} (see also \cite{Cannarsa-Sinestrari}) in a constructive way, and then a simplified proof was given in \cite{Yu} using an approximation method. Here, in order to prove that the curve $\mathbf{y}(t)$, $t\in[0,t_0]$, in Lemma \ref{maximizer_singular} is  a generalized characteristic, we follow the idea of the original proof from \cite{Albano-Cannarsa}. We sketch the proof  for completeness. The following result is an analogy to Lemma 5.5.6 in \cite{Cannarsa-Sinestrari}.

\begin{Lem}\label{B_1}
Let $L$ be a Tonelli Lagrangian, and $t_0\in(0,1]$ be given by Lemma \ref{maximizer_singular}. For any fixed $x\in\R^n$, let $\mathbf{y}:[0,t_0]\to\R^n$ be the curve constructed in Lemma \ref{maximizer_singular}, and let $\mathbf{p}:[0,t_0]\to\R^n$ be the arc defined in Proposition \ref{Main_lemma_g_c}. Then, for any $\varepsilon>0$, there exist arcs $\mathbf{x}_{\varepsilon}:[0,t_0]\to\R^n$, $\mathbf{p}_{\varepsilon}:[0,t_0]\to\R^n$, with $\mathbf{x}_{\varepsilon}(0)=x$ and $\mathbf{p}_{\varepsilon}(0)=p_x$ where $p_x$ is the unique element in $\arg\min_{p\in D^+u(x)}H(x,p)$, and a partition $0=s_0<s_1<\cdot<s_{k-1}<s_k=t_0$ with the following properties:
\begin{enumerate}[\rm (i)]
  \item $\max\{|s_{j+1}-s_j|: 0\leqslant j\leqslant k-1\}<\varepsilon$;
  \item $\mathbf{p}_{\varepsilon}(s)\in D^+u(\mathbf{x}_{\varepsilon}(s))$ for all $s\in[0,t_0]$;
  \item For each $j\in\{0,\ldots,k-1\}$, $\mathbf{p}_{\varepsilon}(\cdot)$ is continuous on $[s_j,s_{j+1})$;
  \item $|\mathbf{x}_{\varepsilon}(t)-\mathbf{x}_{\varepsilon}(s)|\leqslant C_1|t-s|$ for all $t,s\in[0,t_0]$;
  \item $|\mathbf{p}_{\varepsilon}(s)-\mathbf{p}_{\varepsilon}(s')|\leqslant C_2\sqrt{\varepsilon}$\, for all $s,s'\in[s_j,s_{j+1})$, $j\in\{0,\ldots,k-1\}$.
  \item The number of the nodes satisfies $k\leqslant [\frac {C_3}{\varepsilon}]+1$.
\end{enumerate}
\end{Lem}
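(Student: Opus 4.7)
The proof is a restart-and-iterate scheme on a fine partition of $[0,t_0]$, patching together copies of the singular arc constructed in Lemma \ref{maximizer_singular}. First, fix the partition $s_j := jt_0/k$ for $j=0,\ldots,k$ with $k := \lfloor t_0/\varepsilon\rfloor+1$, so that $\max_j(s_{j+1}-s_j) < \varepsilon$ and $k \leqslant t_0/\varepsilon+1$; this settles (i) and (vi) with $C_3=t_0$.

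Next, define the arcs inductively. Set $\mathbf{x}_\varepsilon(0):=x$ and $\mathbf{p}_\varepsilon(0):=p_x$. Once $z_j := \mathbf{x}_\varepsilon(s_j)$ is known, let $p_{z_j}$ denote the unique minimum of $H(z_j,\cdot)$ on $D^+u(z_j)$ supplied by Proposition \ref{Main_lemma_g_c}(b). Then, for $\tau \in (0, s_{j+1}-s_j]$, set
\[
\mathbf{x}_\varepsilon(s_j+\tau) := y_{\tau,z_j},\qquad \mathbf{p}_\varepsilon(s_j+\tau) := L_v\bigl(\xi_{j,\tau}(\tau),\dot\xi_{j,\tau}(\tau)\bigr),
\]
where $y_{\tau,z_j}$ is the unique maximizer of $\phi^{z_j}_\tau$ from Lemma \ref{maximizer_singular} and $\xi_{j,\tau}\in\Gamma^\tau_{z_j,y_{\tau,z_j}}$ is the unique minimizer of $A_\tau(z_j,y_{\tau,z_j})$. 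Property (ii) follows as in the proof of Lemma \ref{maximizer_singular} (the nonsmooth Fermat rule applied to the maximizer $y_{\tau,z_j}$, together with \eqref{diff_of_fund_sol}); (iii) is exactly Proposition \ref{Main_lemma_g_c}(c); and (iv) follows from Proposition \ref{Main_lemma_g_c}(a), whose Lipschitz constant depends only on the semiconcavity constant of $u$ and on the convexity constant of $A_\tau(z,\cdot)$ on balls $B(z,\lambda\tau)$, both of which are uniform in $z\in\R^n$.

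The heart of the proof is the oscillation bound (v). Apply Proposition \ref{Main_lemma_g_c}(d) with $z_j$ in place of $x$: for $\tau\in(0,\rho\wedge(s_{j+1}-s_j)]$,
\[
C_2|\mathbf{p}_\varepsilon(s_j+\tau)-p_{z_j}|^2 \leqslant H(z_j,p_{z_j}) - H(\mathbf{x}_\varepsilon(s_j+\tau),\mathbf{p}_\varepsilon(s_j+\tau)) + C_1\tau.
\]
To turn the right-hand side into $O(\tau)$, invoke energy conservation along $\xi_{j,\tau}$ to rewrite $H(\mathbf{x}_\varepsilon(s_j+\tau),\mathbf{p}_\varepsilon(s_j+\tau))$ as $H(z_j,L_v(z_j,\dot\xi_{j,\tau}(0)))$, and then combine the equi-Lipschitz bound of Lemma \ref{uni_Lip_1} with \eqref{eq:limit_velociy} to control $|L_v(z_j,\dot\xi_{j,\tau}(0))-p_{z_j}|$ by $O(\tau)$. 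This yields $|\mathbf{p}_\varepsilon(s_j+\tau)-p_{z_j}|^2 \leqslant C\tau \leqslant C\varepsilon$, so (v) holds by the triangle inequality with $C_2=2\sqrt{C}$. The main obstacle is precisely this quantitative $O(\tau)$ rate on the initial momentum: Proposition \ref{Main_lemma_g_c}(b) gives only the limit as $\tau\to0^+$, and the rate has to be squeezed out of the $C^{1,1}_{\text{loc}}$ regularity of $(t,y)\mapsto A_t(z_j,y)$ from Proposition \ref{C11_A_t} together with the Lipschitz dependence of $y_{\tau,z_j}$ on $\tau$, with constants uniform in $z_j\in\R^n$.
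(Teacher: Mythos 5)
Your construction — a uniform partition of $[0,t_0]$, restarting the singular arc of Lemma~\ref{maximizer_singular} from $z_j:=\mathbf{x}_\varepsilon(s_j)$ with initial covector $p_{z_j}=\arg\min_{D^+u(z_j)}H(z_j,\cdot)$ — is exactly what the paper does: the authors simply point to Lemma~5.5.6 of Cannarsa--Sinestrari and state that its key inequality (5.64) is to be replaced by Proposition~\ref{Main_lemma_g_c}(d). Items (i), (vi), (ii), (iii), (iv) are handled as in the paper, and your observations about the uniformity in $z_j$ of the relevant constants are correct.

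The problem is in your treatment of (v). After invoking Proposition~\ref{Main_lemma_g_c}(d) you try to turn the right-hand side into $O(\tau)$ by proving $|L_v(z_j,\dot\xi_{j,\tau}(0))-p_{z_j}|=O(\tau)$. But this estimate is not a consequence of anything cited: Proposition~\ref{Main_lemma_g_c}(b),(c) only give the limits $\dot\xi_{j,\tau}(\tau)\to H_p(z_j,p_{z_j})$ and $\mathbf{p}(\tau)\to p_{z_j}$ as $\tau\to0^+$, with no rate, and the $C^{1,1}_{\text{loc}}$ regularity of $(t,y)\mapsto A_t(z_j,y)$ has a Lipschitz constant of order $1/t$, which does not supply the missing rate either. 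Worse, the claim is essentially circular: since by Lemma~\ref{uni_Lip_1} and the Lipschitz continuity of $L_v$ on the relevant compact set one has $\big|L_v(z_j,\dot\xi_{j,\tau}(0))-\mathbf{p}_\varepsilon(s_j+\tau)\big|=O(\tau)$ anyway, your asserted estimate $|L_v(z_j,\dot\xi_{j,\tau}(0))-p_{z_j}|=O(\tau)$ is equivalent (up to constants) to $|\mathbf{p}_\varepsilon(s_j+\tau)-p_{z_j}|=O(\tau)$, which is a \emph{stronger} version of the conclusion you want; if you could prove it directly you would never need to invoke (d) or energy conservation, and (v) would hold with $\varepsilon$ in place of $\sqrt\varepsilon$. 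So the argument has not actually established (v) — you have correctly identified the critical estimate, as you acknowledge, but you have restated it rather than proved it. The intended route, following Cannarsa--Sinestrari with (5.64) replaced by Proposition~\ref{Main_lemma_g_c}(d), controls $H(z_j,p_{z_j})-H(\mathbf{x}_\varepsilon(s_j+\tau),\mathbf{p}_\varepsilon(s_j+\tau))$ by exploiting that $\mathbf{p}_\varepsilon(s_j+\tau)\in D^+u(\mathbf{x}_\varepsilon(s_j+\tau))$ together with the viscosity subsolution/supersolution structure and the minimality of the energy at each restart, not by a direct rate on the initial velocity.
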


\begin{proof}
	The construction of the arcs $\mathbf{x}_{\varepsilon}$ and $\mathbf{p}_{\varepsilon}$ follows the reasoning of Lemma 5.5.6 in \cite{Cannarsa-Sinestrari}. The proof of properties (i)-(vi) also goes as in \cite{Cannarsa-Sinestrari}, except for the use of the essential inequality (5.64) in \cite{Cannarsa-Sinestrari}, which can be replaced by property (d) in Proposition \ref{Main_lemma_g_c}.
\end{proof}

\begin{Lem}\label{Lem_B1}
	Let the arcs $\mathbf{x}_{\varepsilon}$ and $\mathbf{p}_{\varepsilon}$ be defined as in Lemma \ref{B_1}. Then there exists a constant $C>0$, independent of $\varepsilon$, such that 
	\begin{equation}\label{eq:B}
		\left|\mathbf{x}_{\varepsilon}(r)-\mathbf{x}_{\varepsilon}(s)-\int^r_sH_p(\mathbf{x}_{\varepsilon}(\tau),\mathbf{p}_{\varepsilon}(\tau))\ d\tau\right|\leqslant C\sqrt{\varepsilon}.
	\end{equation}
\end{Lem}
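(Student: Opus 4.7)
My plan is to exploit the piecewise structure of $\mathbf{x}_\varepsilon, \mathbf{p}_\varepsilon$ from Lemma~\ref{B_1} and sum per-subinterval errors. By construction, on each $[s_j, s_{j+1})$ the arc $\mathbf{x}_\varepsilon$ is a segment of the singular arc $\mathbf{y}^{x_j}$ emanating from $x_j:=\mathbf{x}_\varepsilon(s_j)$, while $\mathbf{p}_\varepsilon$ coincides with the dual arc $\mathbf{p}^{x_j}$ of Proposition~\ref{Main_lemma_g_c}; in particular $\mathbf{p}_\varepsilon(s_j)=p_{x_j}$, the element of $D^+u(x_j)$ minimising $H(x_j,\cdot)$.

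First I would establish a displacement expansion of the form
\begin{equation*}
\mathbf{x}_\varepsilon(s_{j+1})-\mathbf{x}_\varepsilon(s_j) = h_j\,H_p(x_j,p_{x_j}) + O(h_j^{3/2}),\qquad h_j:=s_{j+1}-s_j\leqslant\varepsilon,
\end{equation*}
which I would derive from the asymptotic identity~\eqref{eq:limit_velociy} (which quantifies the discrepancy between average and endpoint velocity of the local minimizer) together with a H\"older-$\tfrac12$ estimate $|\mathbf{p}^{x_j}(h_j)-p_{x_j}|=O(\sqrt{h_j})$ for the dual arc. The latter I would extract from the energy inequality~\eqref{eq:Holder_esitmate_p} of Proposition~\ref{Main_lemma_g_c}(d) by combining it with the Lipschitz lower bound $H(\mathbf{y}(t),\mathbf{p}(t))-H(x,p_x)\geqslant-C(|\mathbf{y}(t)-x|+|\mathbf{p}(t)-p_x|)$ and absorbing the linear term in $|\mathbf{p}(t)-p_x|$ via Young's inequality, using the already established continuity $\mathbf{p}(t)\to p_x$ to localize. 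Smoothness of $H_p$ then yields $\dot\xi_{h_j}(h_j)=H_p(x_j,p_{x_j})+O(\sqrt{h_j})$, and~\eqref{eq:limit_velociy} transfers this bound to the displacement $\mathbf{y}^{x_j}(h_j)-x_j$.

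Next I would expand the integral of $H_p$ around the same base point,
\begin{equation*}
\int_{s_j}^{s_{j+1}}H_p(\mathbf{x}_\varepsilon(\tau),\mathbf{p}_\varepsilon(\tau))\,d\tau = h_j\,H_p(x_j,p_{x_j}) + O(h_j\sqrt{\varepsilon}),
\end{equation*}
the remainder being controlled by the local Lipschitz continuity of $H_p$ (from (H3)) together with properties (iv) and (v) of Lemma~\ref{B_1}, which deliver $|\mathbf{x}_\varepsilon(\tau)-x_j|\leqslant C_1 h_j$ and $|\mathbf{p}_\varepsilon(\tau)-p_{x_j}|\leqslant C_2\sqrt{\varepsilon}$. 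Subtracting the two expansions gives a per-subinterval error of $C(h_j^{3/2}+h_j\sqrt{\varepsilon})\leqslant Ch_j\sqrt{\varepsilon}$.

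To conclude I would sum over all subintervals $[s_j,s_{j+1}]\subset[r,s]$, add the at-most-two boundary contributions from the partial subintervals containing $r$ and $s$ (handled identically since their length is $\leqslant\varepsilon$), and use $\sum_j h_j\leqslant t_0$ to get the announced bound $C\sqrt{\varepsilon}$. The main obstacle I anticipate is the rigorous derivation of the H\"older-$\tfrac12$ estimate on $\mathbf{p}^{x_j}$ near its starting point: the inequality~\eqref{eq:Holder_esitmate_p} by itself only controls $|\mathbf{p}(t)-p_x|^2$ from above by $H(x,p_x)-H(\mathbf{y}(t),\mathbf{p}(t))+C_1 t$, and pairing it with a matching Lipschitz lower bound to close the loop (in particular, justifying the absorption in the regime where $|\mathbf{p}(t)-p_x|$ is already known to be small) is where essentially all of the technical care is required.
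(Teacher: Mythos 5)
Your overall architecture (a per-subinterval comparison followed by summation over the at most $[C_3/\varepsilon]+1$ subintervals, or equivalently $\sum_j h_j\leqslant t_0$) is sound, and your treatment of the integral term using properties (iv), (v) of Lemma \ref{B_1} and the local Lipschitz continuity of $H_p$ is correct. The genuine gap is exactly where you place it: the H\"older estimate $|\mathbf{p}^{x_j}(t)-p_{x_j}|=O(\sqrt t\,)$ does not follow from the argument you propose. Pairing \eqref{eq:Holder_esitmate_p} with the Lipschitz lower bound $H(\mathbf{y}(t),\mathbf{p}(t))\geqslant H(x,p_x)-C\big(|\mathbf{y}(t)-x|+|\mathbf{p}(t)-p_x|\big)$ only yields an inequality of the form $C_2q^2\leqslant C't+Cq$ with $q=|\mathbf{p}(t)-p_x|$, whose solution set is $q\leqslant \big(C+\sqrt{C^2+4C_2C't}\big)/(2C_2)$; this stays bounded away from $0$ as $t\to0^+$, so no rate is obtained. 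Young's inequality necessarily leaves an $O(1)$ remainder of size $C^2/C_2$, and the qualitative continuity $\mathbf{p}(t)\to p_x$ carries no modulus, so a bootstrap from $q\leqslant\delta$ only returns $q\lesssim\sqrt{t+\delta}$ and never improves to $\sqrt t$. Hence the claimed $O(h_j^{3/2})$ displacement error is not justified by the tools you cite.

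The good news is that the detour is unnecessary. Property (v) of Lemma \ref{B_1} already gives $|\mathbf{p}_\varepsilon(\tau)-p_{x_j}|\leqslant C_2\sqrt\varepsilon$ on $[s_j,s_{j+1})$ (and up to the right endpoint by the continuity of the dual arc in Proposition \ref{Main_lemma_g_c}(c)), so combining \eqref{eq:limit_velociy} with $\dot\xi_{h_j}(h_j)=H_p(\mathbf{y}(h_j),\mathbf{p}(h_j))$, the Lipschitz bound $|\mathbf{y}(h_j)-x_j|\leqslant Ch_j$ from Proposition \ref{Main_lemma_g_c}(a), and the Lipschitz continuity of $H_p$ on the relevant compact set yields a displacement error $O\big(h_j(h_j+\sqrt\varepsilon)\big)=O(h_j\sqrt\varepsilon)$, which is all your final summation requires. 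For comparison, the paper avoids any expansion at the base point: on a single subinterval it writes the quantity to be estimated as $\int_0^r\big[H_p(\xi_r(\tau),p_r(\tau))-H_p(\xi_\tau(\tau),p_\tau(\tau))\big]\,d\tau$ and bounds the integrand by $C\sqrt\varepsilon$ using the equi-Lipschitz estimates of Lemma \ref{uni_Lip_1} together with (iv) and (v), then invokes (vi) to sum; once you replace your H\"older step by the use of (v) as above, your variant becomes a legitimate, essentially equivalent alternative.
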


\begin{proof}
	In view of property (vi) in Lemma \ref{B_1}, it is sufficient to prove \eqref{eq:B} under the extra assumption that $s=0$ and $r<s_1$. Thus, we have
	\begin{align*}
		&\left|\mathbf{x}_{\varepsilon}(r)-\mathbf{x}_{\varepsilon}(s)-\int^r_sH_p(\mathbf{x}_{\varepsilon}(\tau),\mathbf{p}_{\varepsilon}(\tau))\ d\tau\right|=\left|\mathbf{y}(r)-\mathbf{y}(0)-\int^{r}_{0}H_p(\mathbf{y}(\tau),\mathbf{p}(\tau))\ d\tau\right|\\
		=&\left|\xi_{r}(r)-x-\int^r_0H_p(\xi_{\tau}(\tau),p_{\tau}(\tau))\ d\tau\right|=\left|\int^r_0\dot{\xi}_{r}(\tau)-H_p(\xi_{\tau}(\tau),p_{\tau}(\tau))\ d\tau\right|\\
		=&\left|\int^r_0H_p(\xi_{r}(\tau),p_{r}(\tau))-H_p(\xi_{\tau}(\tau),p_{\tau}(\tau))\ d\tau\right|,
	\end{align*}
	where $\xi_t\in\Gamma^t_{x,\mathbf{y}(t)}$ is a minimizer for $A_t(x,\mathbf{y}(t))$, $t\in(0,s_1)$ and $p_t(\tau):=L_v(\xi_t(\tau),\dot{\xi}_t(\tau))$, $\tau\in[0,t]$. Then, by Lemma \ref{uni_Lip_1}, Proposition \ref{Main_lemma_g_c} (a) and Lemma \ref{B_1}, we have
	\begin{align*}
		&|H_p(\xi_{r}(\tau),p_{r}(\tau))-H_p(\xi_{\tau}(\tau),p_{\tau}(\tau))|\\
		\leqslant&|H_p(\xi_{r}(\tau),p_{r}(\tau))-H_p(\xi_{r}(\tau),p_{\tau}(\tau))|+|H_p(\xi_{r}(\tau),p_{\tau}(\tau))-H_p(\xi_{\tau}(\tau),p_{\tau}(\tau))|\\
		\leqslant& C_1|p_{r}(\tau))-p_{\tau}(\tau))|+C_2|\xi_{r}(\tau)-\xi_{\tau}(\tau)|\\
		\leqslant&C_1(|p_{r}(\tau))-p_{r}(r))|+|p_{r}(r))-p_{\tau}(\tau))|)+C_2(|\xi_{r}(\tau)-\xi_{r}(r)|+|\xi_{r}(r)-\xi_{\tau}(\tau)|)\\
		\leqslant&C_1(C_3\varepsilon+|\mathbf{p}(r)-\mathbf{p}(\tau)|)+C_2(C_4\varepsilon+|\mathbf{y}(r')-\mathbf{y}(\tau)|)\\
		\leqslant&C_5\varepsilon+C_6\sqrt{\varepsilon}\leqslant C_7\sqrt{\varepsilon},
	\end{align*}
	which leads to \eqref{eq:B}.
\end{proof}

The rest  of the proof is standard, see, e.g., \cite{Albano-Cannarsa} or \cite{Cannarsa-Sinestrari}. As $\varepsilon\to0$ in \eqref{eq:B}, we obtain
$$
\dot{\mathbf{y}}(s)\in\text{co}\,H_p(\mathbf{y}(s),D^+u(\mathbf{y}(s))).
$$
We omit the rest of the proof. The reader can refer to, for instance, \cite[Page 133-135]{Cannarsa-Sinestrari}.

\section{Global viscosity solutions on $\R^n$}
\label{appendix:B}
In this section we prove Proposition~\ref{Ext_and_reachable}.

\begin{proof}
The first part of the conclusion, that is, the fact that there exists a constant $c(H)\in\R$ such that the Hamilton-Jacobi equation
\eqref{eq:crtical_H_J_eqn}
admits a viscosity solution $u:\R^n\to\R$  for $c=c(H)$ and does not admit any such solution for $c<c(H)$ is guaranteed by Theorem~1.1 in \cite{Fathi-Maderna}.
	Moreover, in view of Proposition~4.1 in \cite{Fathi-Maderna}, we have that 
	$$
	u=T^-_tu+c(H)t\qquad\forall\ t\geqslant 0,
	$$ 
where $T^-_t$ is defined in \eqref{L-L regularity_inf}. Therefore, $u$ is Lipschitz continuous on $\R^n$ on account of Proposition~3.2 in \cite{Fathi-Maderna}. 

We proceed to show that $u$ is also semiconcave. Let $A_t(x,y)$ be the fundamental solution of \eqref{eq:crtical_H_J_eqn} with $c=c(H)$ and fix $t_0\in (0,2/3)$. For every $x\in\R^n$ we have that
\begin{eqnarray*}
u(x)=\min_{y\in\R^n}\big\{u(y)+A_{t_0}(y,x)\big\} +c(H)t_0.
\end{eqnarray*}
Let $y_x\in\R^n$ be a point at which the above minimum is attained. Then taking $\lambda=1$ in Proposition \ref{semiconcave_A_t} we conclude that, for all $y\in B(y_x, t_0)$ and all $z\in B(0, t_0)$,
\begin{equation*}
A_{t_0}(y_x,y+z)+A_{t_0}(y_x,y-z)-2A_{t_0}(y_x,y)\leqslant \frac{C_1}{t_0}|z|^2
\end{equation*}
for some constant $C_1>0$ independent of $y_x$.
 	Therefore, taking $y=x$ in the above inequality we obtain
	\begin{eqnarray*}
\lefteqn{u(x+z)+u(x-z)-2u(x)}
\\
&\leqslant& A_{t_0}(y_x,x+z)+A_{t_0}(y_x,x-z)-2A_{t_0}(y_x,x)\leqslant \frac{C_1}{t_0}|z|^2
\end{eqnarray*}
all $z\in B(0, t_0)$. So, being Lipschitz, $u$ is semiconcave on $\R^n$. \end{proof}

\end{document}